\documentclass[11pt]{article}
\usepackage{bbm}
\usepackage{amsfonts}
\usepackage{pifont}
\usepackage{bbding}
\usepackage{latexsym}
\usepackage{mathrsfs}
\usepackage{amsmath}
\usepackage{cite}
\usepackage{amsthm}
\usepackage{amssymb}
\usepackage{graphicx}

\newtheorem{theorem}{\indent Theorem}[section]
\newtheorem{corollary}{\indent Corollary}[section]
\newtheorem{proposition}{\indent Proposition}[section]
\newtheorem{definition}{\indent Definition}[section]
\newtheorem{lemma}{\indent Lemma}[section]
\newtheorem{remark}{\indent Remark}[section]
\numberwithin{equation}{section}

\date{}
\begin{document}
\begin{center}
{\Large \bf Global random attractor for 3D stochastic Navier-Stokes equation with nonlinear colored noise$^{\small\mbox{\ding{73}}}$}\\
\vspace{0.5cm} {Xingjie Yan}\\\vspace{6pt}
{\small {\em{ Department of Mathematics, China University of Mining
and Technology,\\ Xuzhou, Jiangsu, 221116, People's Republic of
China}}}

\vspace{0.5cm} {Rong Yang$^{*}$}\\\vspace{6pt}
{\small {\em{ School of Mathematics, Statistics and Mechanics, Beijing University of Technology, Ping Le Yuan 100,\\ Chaoyang District, Beijing, 100124, People's Republic of
China}}}

\vspace{0.5cm} {Alain Miranville}\\\vspace{6pt}
{\small {\em{ Laboratoire de Math\'{e}matiques Appliqu\'{e}es du Havre, Universit\'{e} Le Havre Normandie,\\ 76600 Le Havre, France }}}

\end{center}
\footnote[0] {\hspace*{-0.58cm}$^{\tiny\mbox{\ding{73}}}$ This work
is partially supported by the National Nature Science Foundation of China grant (11501560), and partially supported by the
Fundamental Research Funds for the Central Universities, China under Grant No. 2024KYJD2001.\\
$^*$ Corresponding author. \\
\textit{E-mail address}: yanxj04@163.com (X. Yan), ysihan2010@163.com (R. Yang), alain.miranville@univ-lehavre.fr (A. Miranville).}

\begin{abstract}
In this paper, we first introduce the definitions of random evolutionary system that associate with random evolutionary semigroup and the corresponding global weak or strong random attractor. Then we establish the existence result about global weak or strong random attractor and it's properties like invariance and weak or strong tracking features. Finally, we use our established results to the 3D stochastic Navier-Stokes equation with nonlinear colored noise.


\textbf{Keywords}: Random evolutionary system; Random weak and strong global attractor; Random weak and strong asymptotic compactness; 3D stochastic Navier-Stokes equation.
\end{abstract}

\section{Introduction}

Stochastic partial differential equations are powerful tool for understanding
and investigating mathematically hydrodynamic and turbulence theory. It is not just add a noise term on the deterministic partial differential equations. Indeed, many rigorous information on questions
of turbulence theory like memory effect and uncertainty might be obtained from these stochastic versions \cite{8,9}. It should be emphasized that the random effects will exhibit different behaviors in the equations.

Traditionally, if the whole dynamical system is well-posedness, then  semigroup  and  global attractor can be defined to analyze the dynamical behavior. However, more and more examples indicate that ill-posedness problems will become increasingly common. For example,  the uniqueness of weak solution to the 3D Navier-Stokes equations, 2D Euler equations and 3D magnetohydrodynamic equations is unknown. To analyze the dynamical behavior of the above ill-posedness problems, usually there are the following methods used: the first one is the multi-value semigroup and global attractor \cite{12}, the second one is the generalized semigroup and global attractor \cite{13}, the third one is the trajectory attractor \cite{14} and the last one is the evolutionary system and global attractor \cite{1}. The difference between them is how they define semigroup, use all trajectories of system by initial points and end points or single trajectory or define translation semigroup on space that consist of all trajectories of system. 

Ball's generalized semigroup is very similar to multi-value semigroup, except for concatenate and upper semicontinuous properties of trajectory when applied to  3D Navier-Stokes equations, but these properties are open for Leray-Hopf weak solutions to the 3D Navier-Stokes equation. The phase space of trjectory attractor is not the original phase space  that  the connection with
the physical meaning. More importantly, how to extend the deterministic results of ill-posedness problems to stochastic case is a topic of considerable interest. We refer to \cite{11, CWWZ2023, GW2018, GW2020, 33, WW2015, WY2019} and the references therein for the study of multivalued random dynamical system. If we study long time behavior of  3D stochastic Navier--Stokes equations, the theory of above multivalued random dynamical system isn't suitable completely, the reason is very similar to deterministic case, thus it is very necessary for us extend theory of evolutionary system and global attractor \cite{1} to random case.

The study of dynamical behavior of 3D stochastic Navier--Stokes equations is an interesting and challenging problem. Especially, the existence of global attractor is still an open problem though some partial results have been obtained.  The first results presented in \cite{3,4} concerned the existence of a trajectory attractor for the Navier--
Stokes system perturbed by multiplicative and additive noises, respectively. A similar
approach is used in \cite{5}. In \cite{6}, the conditional result of random attractor was obtained to the 3D stochastic Navier--Stokes system perturbed by an additive noise. The authors in \cite{7} obtained a weak global attractor for 3D stochastic Navier-Stokes equations perturbed by a multiplicative Brownian noise in help with a modified equation. The reason for they choose a 3D stochastic globally modified Navier--Stokes equations is that the weak solution of this equation satisfy energy inequality. In summary, as to the global random attractor for the 3D stochastic Navier-Stokes equations, the weak global random attractor was obtained with no additional condition. However,  in order to obtain the strong global random attractor,  all the  results are obtained under certain conditions, even for the deterministic case.

In this paper, the  main intention is to  study the existence of global random attractor and tracking properties for the 3D stochastic Navier-Stokes equation with nonlinear colored noise.  The main features and difficulties can be proposed as follows.
\begin{enumerate}
	\item[(I)] Motivated by the deterministic case in \cite{1,2},  we extend the theory of deterministic evolutionary system  to the random case.    

	\item[(II)] We use the established abstract results to study the dynamical behavior of Leray-Hopf weak solution of random 3D Navier-Stokes equation, the noise term is nonlinear colored instead of white.
	
\end{enumerate}

The organization of this paper is as follows. In Section 2, we introduce the definition of random evolutionary system that contains all information of stochastic flow, then we also define the random evolutionary semigroup. In Section 3, naturally, the definition of weak and strong random global attractor for random evolutionary semigroup is presented. Then we give theorems of existence of weak random global attractor. In Section 4, with the help of strong random asymptotic compactness, the weak random global attractor obtained in section 3 becomes strong one. In Section 5, weak and strong invariance and tracking features are obtain for weak and strong random global attractor respectively. In Section 6, we apply the abstract results established above to the 3D Navier-Stokes equation with nonlinear colored noise and analyze it's dynamical behavior.

\section{Random evolutionary system}
In this section, some notations of evolutionary system are stated, which agree with  \cite{1,2}. Also for the basic theory of random dynamical systems and the existence of random attractors, we refer the reader to \cite{8,10,11}. The combining of the theories of above two dynamical systems leads to the definition of random evolutionary system. Then we give some properties of random evolutionary semigroup.

Let $X$ be a metric space with strong metric $d_s$, it is abbreviated as $(X,d_s(\cdot,\cdot))$ (sometimes $X$  is a Banach space with norm $\|\cdot\|_X$, there is no misunderstand according to the above and following content). We say that $d_w$ is a weak metric on $X$ if it satisfies the following conditions:
\begin{enumerate}
\item[(a)]$X$ is $d_w-$compact;
\item[(b)] $d_s(u_n,v_n)\rightarrow0$ as $n\rightarrow\infty$ for some $u_n,v_n\in X$ implies that $d_w(u_n,v_n)\rightarrow0$ as $n\rightarrow\infty$.
\end{enumerate}

The black dote $\bullet$ represents strong or weak, abbreviated as $\bullet=s,w$. So $\bar{A}^\bullet$ denotes the closure of a set $A\subset X$ in the topology generated by $d_\bullet$. By functional analysis, any strongly compact set is weakly compact, and any weakly closed set is strongly closed.

Space $C([a,b];X_\bullet)$ is the collection  of $d_\bullet-$continuous $X-$valued functions on interval $[a,b]$ endowed with the metric
$$d_{C([a,b];X_\bullet)}(u,v)=\max_{t\in[a,b]}d_\bullet(u(t),v(t)),\quad \forall u,v\in C([a,b];X_\bullet).$$

Let $C([a,\infty);X_\bullet)$ be the space of $d_\bullet-$continuous $X-$valued functions on interval $[a,\infty)$ endowed with the metric
$$d_{C([a,\infty);X_\bullet)}(u,v)=\sum_{T\in \mathbb{N}}\frac{1}{2^{T}}\frac{\sup\{d_\bullet(u(t),v(t)):a\leq t\leq a+T\}}{1+\sup\{d_\bullet(u(t),v(t)):a\leq t\leq a+T\}}.$$

The convergence in $C([a,\infty);X_\bullet)$ means  locally uniform convergence, i.e., for any $u_n \rightarrow u$ in $C([a,\infty);X_\bullet)$  as $n\rightarrow\infty$ implies that
$u_n|_{[T_1,T_2]}\rightarrow u|_{[T_1,T_2]}$ in $C([T_1,T_2];X_\bullet)$, for any $T_1,T_2\in [a,\infty)$.

Let $X$ and $Y$ be two metric spaces with Borel $\sigma-$algebras $\mathcal{B}(X)$ and $\mathcal{B}(Y)$, respectively.  Let $P(X)$ be the set of all subsets of $X$. Recall that a mapping $G:X\rightarrow P(Y)$ is measurable with respect to $\mathcal{B}(X)$ if the inverse image of any open subset of $Y$ under $G$ is a Borel subset of $X$.
\begin{definition}
Suppose  $X$ is a metric space and $Y$ is a Banach spaces. A multivalued function $G:X\rightarrow 2^{Y}$ is said to be weakly upper semicontinuous at $x_0\in X$ if for any $x_n\rightarrow x_0$ in $X$ and $z_n\in G(x_n)$, there exist a subsequence $z_{n_k}$ of $z_n$ such that $z_{n_k}\rightarrow z_0$ weakly in $Y$, here $z_0\in G(x_0)$. In addition, if $G$ is weakly upper semicontinuous at every $x\in X,$ we say that $G$ is weakly upper semicontinuous.
\end{definition}

\begin{theorem}(\cite[Theorem 2.3]{33})\label{2.1} Let $X$ be a metric space and $Y$ a separable Banach space. If $G:X\rightarrow 2^{Y}$ is weakly upper semicontinuous, then $G$ is measurable with respect to $\mathcal{B}(X)$.
\end{theorem}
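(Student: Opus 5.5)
The measurability condition in the paper is stated through inverse images of open sets, $G^{-1}(U)=\{x\in X: G(x)\cap U\neq\emptyset\}$. Weak upper semicontinuity, by contrast, is a sequential compactness statement. Proving that $G^{-1}(U)$ is Borel for every open $U$ is awkward directly, so I would pass through closed sets. The plan has three steps: show that $G^{-1}(F)$ is closed in $X$ whenever $F$ is weakly closed; express norm-open sets as countable unions of such $F$; and use countable unions to conclude. Throughout I assume, as implicit in the definition, that $G(x)\neq\emptyset$ for every $x$.

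\textbf{Step 1: weakly closed sets.} Let $F\subset Y$ be weakly (sequentially) closed. Take $x_n\in G^{-1}(F)$ with $x_n\to x_0$ in $X$, and pick $z_n\in G(x_n)\cap F$. By weak upper semicontinuity at $x_0$, a subsequence $z_{n_k}$ converges weakly to some $z_0\in G(x_0)$. Since $F$ is weakly sequentially closed, $z_0\in F$, so $x_0\in G^{-1}(F)$. Because $X$ is a metric space, sequential closedness gives that $G^{-1}(F)$ is closed, hence Borel.

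\textbf{Step 2: decomposing open sets.} Let $U\subset Y$ be norm-open. Since $Y$ is separable, fix a countable dense set $\{y_j\}$. Then
\[
U=\bigcup\{\bar B(y_j,r): j\in\mathbb{N},\ r\in\mathbb{Q}_{>0},\ \bar B(y_j,r)\subset U\}.
\]
This holds because any $y\in U$ with $B(y,2\varepsilon)\subset U$ admits $y_j$ with $\|y-y_j\|<\varepsilon/2$ and a rational $r\in(\varepsilon/2,\varepsilon)$, giving $y\in \bar B(y_j,r)\subset U$. Each closed ball is convex and norm-closed, so by Mazur's theorem it is weakly closed, and in particular weakly sequentially closed.

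\textbf{Step 3: conclusion.} Inverse images commute with unions: $G^{-1}(\bigcup_k F_k)=\bigcup_k G^{-1}(F_k)$. Hence $G^{-1}(U)$ is a countable union of closed sets by Steps 1 and 2, which is an $F_\sigma$ set and therefore Borel. This proves the theorem.

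The main point to check is the decomposition in Step 2. Separability is what makes the union countable, and the weak closedness of balls (convexity plus Mazur) is exactly what lets Step 1 apply. The weak limit only lands in a weakly closed set, which is why open sets cannot be handled directly and must be approximated by closed balls.
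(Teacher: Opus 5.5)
The paper gives no proof of this statement; it is quoted from \cite[Theorem 2.3]{33}, so there is no argument in the text to compare yours against. Your proof is correct and self-contained, and it is the standard one for this kind of result.

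You read ``inverse image'' as the lower inverse $G^{-1}(U)=\{x\in X: G(x)\cap U\neq\emptyset\}$. This is the usual convention in the multivalued random dynamical systems literature, and it is the one the paper leaves implicit. Under this reading, Step~1 correctly shows that $G^{-1}(F)$ is sequentially closed, hence closed in the metric space $X$, for every weakly sequentially closed $F$. Step~2 correctly uses separability and Mazur's theorem to write each norm-open set as a countable union of closed balls. Your verification that $\bar B(y_j,r)\subset B(y,2\varepsilon)$ checks out. Step~3 then gives that $G^{-1}(U)$ is an $F_\sigma$ set. The convention matters: with the upper inverse $\{x: G(x)\subset U\}$, Step~1 would not apply, because the complement of a norm-open set need not be weakly closed.

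One small simplification: you never use the standing assumption $G(x)\neq\emptyset$, so you can drop it. In Step~1, nonemptiness of $G(x_n)$ follows from $x_n\in G^{-1}(F)$. The existence of a point $z_0\in G(x_0)$ is supplied by weak upper semicontinuity itself.
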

Define the set
$$\mathcal{T}:=\{I:I=[T,\infty)\subset \mathbb{R},~\mathrm{or}~ I=(-\infty,+\infty)\},$$
and for each $I\subset \mathcal{T}$, let $\Xi(I)$ denote the set of all $X-$valued functions on $I$.
Assume that $(\Omega, \mathcal{F}, P, \{\theta_t\}_{t\in \mathbb{R}})$ is a metric dynamical system, where $(\Omega, \mathcal{F}, P)$ is a probability space and $\theta:\mathbb{R}\times\Omega\rightarrow\Omega$ is a measure-preserving group of translations on $\Omega$. Following, we give the difinition of random evolutionary system that contains all the information of stochastic flow of the system we will consider, we can define different random evolutionary system according to different system. 
\begin{definition}\label{evsys}
A family of $\mathcal{B}(\mathbb{R}^+)\times\mathcal{F}\times\mathcal{B}(X)$ measurable maps $\mathcal{E}^{\theta}_{\omega}$, which satisfies that subset $\mathcal{E}^{\theta}_{\omega}(I)\subset\Xi(I)$ for every $\omega\in\Omega$ and $I\in\mathcal{T}$,  will be called a random evolutionary system if the following conditions hold:
\begin{enumerate}
\item[1.]$\mathcal{E}^{\theta}_{\omega}([T,\infty))\neq\emptyset,$ $\forall T\in \mathbb{R}, \omega\in\Omega,$ $u\in \mathcal{E}^{\theta}_{\omega}([T,\infty))$ is $\mathcal{B}(\mathbb{R}^+)\times\mathcal{F}\times\mathcal{B}(X)$ measurable;
\item[2.]$\mathcal{E}^{\theta}_{\omega}(I+s)=\{u(\cdot):u(\cdot-s)\in \mathcal{E}^{\theta}_{\theta_s\omega}(I)\},\forall s\geq0$;
\item[3.]$\{u(\cdot)|_{I_2}:u(\cdot)\in \mathcal{E}^{\theta}_{\omega}(I_1)\}\subset\mathcal{E}^{\theta}_{\omega}(I_2)$ for all $I_1,I_2\in\mathcal{T}$ and $I_2\subset I_1, \omega\in\Omega$;
\item[4.]$\mathcal{E}^{\theta}_{\omega}((-\infty,\infty))=\{u(\cdot):u(\cdot)|_{[T,\infty)}\in \mathcal{E}^{\theta}_{\omega}([T,\infty)),\forall  T\in \mathbb{R}\}.$
\end{enumerate}
\end{definition}

We will denote $\mathcal{E}^{\theta}_{\omega}(I)$ as the set of all random trajectories with respect to sample $\omega$ on the time interval $I$. Random trajectories in $\mathcal{E}^{\theta}_{\omega}((-\infty,+\infty))$ will be called complete. For every $t\geq0$, $  \omega\in\Omega$, we define a map
$$ R(t, \omega):P(X)\rightarrow P(X),$$
$$ R(t, \omega)A:=\{u(t):u(0)\in A, u\in\mathcal{E}^{\theta}_{\omega}([0,\infty))\}, \quad\forall A\in P(X).$$
Note that the assumptions on $\mathcal{E}^{\theta}_{\omega}$ imply that $ R(t, \omega)$ enjoys the following property:
\begin{enumerate}
\item[(1)]$ R(0, \omega)\cdot$ is an identity map on $X$.
\item[(2)]$R(\cdot, \cdot)\cdot$ is $\mathcal{B}(\mathbb{R}^+)\times\mathcal{F}\times\mathcal{B}(X)$ measurable;
\item[(3)]$ R(t, \omega)A(\omega)\subset R(t, \theta_s\omega)R(s, \omega)A(\omega), \forall A\in P(X), t\geq s\geq0,  \omega\in\Omega,$
\end{enumerate}
here we say that $\{ R(t, \omega)\}=\{ R(t,\omega): t\in \mathbb{R}^+,  \omega\in\Omega\}$ is a random evolutionary semigroup.

 As a consequence of Theorem \ref{2.1}, the above map enjoys the following property.
\begin{lemma}\label{measurable}
Let $\{ R(t, \omega)\}=\{ R(t, \omega): t\in \mathbb{R}^+,  \omega\in\Omega\}$ is a random evolutionary semigroup. Suppose that $\Omega$ is a metric space and $X$ a separable Banach space. If the mapping $R(\cdot, \cdot)\cdot$ is weakly upper semicontinuous, then $R(\cdot, \cdot)\cdot$ is $\mathcal{B}(\mathbb{R}^+)\times\mathcal{F}\times\mathcal{B}(X)$ measurable.
\end{lemma}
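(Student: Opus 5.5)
This is a direct application of Theorem \ref{2.1}. We regard $R(\cdot,\cdot)\cdot$ as a single multivalued map
$$G:\mathbb{R}^+\times\Omega\times X\rightarrow 2^{X},\qquad G(t,\omega,x):=R(t,\omega)\{x\}=\{u(t):u(0)=x,\ u\in\mathcal{E}^{\theta}_{\omega}([0,\infty))\}.$$
Theorem \ref{2.1} requires three inputs: the domain is a metric space, the target is a separable Banach space, and the map is weakly upper semicontinuous. The conclusion is then measurability with respect to the Borel $\sigma$-algebra of the domain.

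\textbf{Step 1: the domain is a metric space.} Equip $\mathbb{R}^+\times\Omega\times X$ with the product metric
$$\rho\big((t,\omega,x),(t',\omega',x')\big)=|t-t'|+d_\Omega(\omega,\omega')+\|x-x'\|_X.$$
This uses that $\Omega$ is a metric space and $X$ is a Banach space, both by hypothesis. The target $Y=X$ is a separable Banach space, also by hypothesis.

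\textbf{Step 2: weak upper semicontinuity.} The hypothesis that $R(\cdot,\cdot)\cdot$ is weakly upper semicontinuous is read as the definition above applied to $G$ on this product metric space. Concretely, if $(t_n,\omega_n,x_n)\to(t_0,\omega_0,x_0)$ and $z_n\in G(t_n,\omega_n,x_n)$, then some subsequence $z_{n_k}$ converges weakly to a point of $G(t_0,\omega_0,x_0)$. Theorem \ref{2.1} therefore applies. It gives that the inverse image $\{(t,\omega,x):G(t,\omega,x)\cap O\neq\emptyset\}$ of every open $O\subset X$ lies in $\mathcal{B}(\mathbb{R}^+\times\Omega\times X)$.

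\textbf{Step 3: identifying the $\sigma$-algebras.} This is the only point needing care. Theorem \ref{2.1} yields measurability with respect to $\mathcal{B}(\mathbb{R}^+\times\Omega\times X)$, while the lemma asks for $\mathcal{B}(\mathbb{R}^+)\times\mathcal{F}\times\mathcal{B}(X)$.

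First, $\mathbb{R}^+$ and $X$ are separable metric spaces. If $\Omega$ is also separable (and $\mathcal{F}$ contains its Borel sets, as in the standard metric dynamical system setting), then the Borel $\sigma$-algebra of a finite product of separable metric spaces equals the product of the Borel $\sigma$-algebras. Hence
$$\mathcal{B}(\mathbb{R}^+\times\Omega\times X)=\mathcal{B}(\mathbb{R}^+)\times\mathcal{B}(\Omega)\times\mathcal{B}(X)\subset\mathcal{B}(\mathbb{R}^+)\times\mathcal{F}\times\mathcal{B}(X),$$
which is exactly the required conclusion.

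I would state the implicit assumptions explicitly: $\Omega$ is separable and $\mathcal{B}(\Omega)\subset\mathcal{F}$. This holds, for instance, in the canonical Wiener-type setting, where $\Omega=C_0(\mathbb{R})$ with the compact-open metric is Polish and $\mathcal{F}$ is its Borel $\sigma$-algebra. Under these assumptions the proof is complete.
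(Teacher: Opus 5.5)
Your argument is correct and follows the paper's route. The paper gives no separate proof and simply presents the lemma as a consequence of Theorem \ref{2.1} applied to $(t,\omega,x)\mapsto R(t,\omega)\{x\}$. Your Step 3 supplies what the paper leaves implicit: that $\mathcal{B}(\mathbb{R}^+\times\Omega\times X)$ is contained in $\mathcal{B}(\mathbb{R}^+)\times\mathcal{F}\times\mathcal{B}(X)$.

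The assumption $\mathcal{B}(\Omega)\subset\mathcal{F}$ is genuinely needed, but separability of $\Omega$ is not. Since $\mathbb{R}^+\times X$ is second countable, every open subset of $(\mathbb{R}^+\times X)\times\Omega$ is a countable union of open rectangles $U_n\times V_n$. Hence $\mathcal{B}(\mathbb{R}^+\times X\times\Omega)=\mathcal{B}(\mathbb{R}^+)\times\mathcal{B}(X)\times\mathcal{B}(\Omega)$ holds for any metric space $\Omega$.
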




\section{Global random attractor}
In this section, we present some abstract results of global random attractor of random evolutionary system. Considering an arbitrary random evolutionary system $\mathcal{E}^{\theta}_{\omega}$, for a set $A\subset X$ and $r>0$, denote $$B_\bullet(A,r)=\{u\in X:d_\bullet(u,A)<r\},$$ where
$$d_\bullet(u,A):=\inf_{x\in A}d_\bullet(u,x).$$

Let $(\Omega, \mathcal{F}, P, \{\theta_t\}_{t\in \mathbb{R}})$ be a metric dynamical system and $\{ R(t, \omega)\}$ be a random evolutionary semigroup and a family of random set $\{K(\omega)\}\in P(X)$,  $\omega\in\Omega$ means that every $K(\omega)$ belongs to $ P(X)$.
\begin{definition}
A family of set $\{K(\omega)\}$ is called random absorbing set, if for any given $D\in  P(X)$, there exists $t_0=t_0(D)$, such that
$$R(t,\theta_{-t}\omega)D\subset K(\omega),~\mathrm{when} ~t\geq t_0.$$
\end{definition}

\begin{definition}
 A family of sets $\{K(\omega)\}\subset X$ we call random  $d_\bullet-$attracting set is that it random  $d_\bullet-$attract $D\in  P(X)$ under $R(t,\omega)$ if
$$\lim_{t\rightarrow\infty}d_\bullet(R(t,\theta_{-t}\omega)D,K(\omega))=0,\omega\in\Omega.$$
\end{definition}
\begin{definition}
A family of set $\{\mathcal{A}_\bullet(\omega)\}$ is called global random attractor for a random evolutionary semigroup $\{R(t,\omega)\}$ if it is compact and satisfying that
\begin{enumerate}
\item[(1)]measurable, $\mathcal{A}_\bullet:\Omega\rightarrow P(X)$ is measurable with respect to $\mathcal{F}$;
\item[(2)]invariance, $R(t,\omega)\mathcal{A}_\bullet(\omega)=\mathcal{A}_\bullet(\theta_t\omega), \forall \omega\in\Omega$;
\item[(3)]random $d_\bullet-$attracting set.
\end{enumerate}
\end{definition}

\begin{remark}
  $\{K(\omega)\}$ is compact means that  $K(\omega)$ is a compact set for every $w\in\Omega$.
\end{remark}

\begin{remark}
In the above definition, if $\{\mathcal{A}_\bullet(\omega)\}$ exist, then it is minimal. The minimal means that if there exists another a family of $d_\bullet-$closed and random $d_\bullet-$attracting set, then it  contains $\{\mathcal{A}_\bullet(\omega)\}$.
\end{remark}


\begin{definition}
The  $\Omega_\bullet-$limit of a set $A\in  P(X)$ is defined by
$$\Omega_\bullet(A,\omega):=\bigcap_{s\leq 0}\overline{\bigcup_{t\leq s}R(t,\theta_{-t}\omega)A}^\bullet.$$
Or equivalently
\begin{eqnarray*}
\Omega_\bullet(A,\omega)=\{y\in X:~\mathrm{ there~ are~ sequences~} s_k\geq 0,s_k\rightarrow\infty~ \mathrm{as}~ k\rightarrow\infty ~\mathrm{and} ~x_k\in R(s_k,\theta_{-s_k}\omega)A,\\ \mathrm{~such~ that~}
x_k\rightarrow y~ \mathrm{in}~ d_\bullet \mathrm{~metric~ as~ }k\rightarrow\infty\}. \quad\quad\quad \quad\quad\quad \quad\quad\quad \quad\quad\quad \quad\quad\quad \quad
\end{eqnarray*}
\end{definition}

Now, we are ready to present some properties of $\Omega_\bullet-$limit set that immediately follow from the definition.

\begin{lemma}\label{l3.1}
Let $A\in  P(X)$ and $ \omega\in\Omega$, then we have
\begin{enumerate}
\item[(1)]$\Omega_\bullet(A,\omega)$ is $d_\bullet-$closed;
\item[(2)]$\Omega_s(A,\omega)\subset \Omega_w(A,\omega)$;
\item[(3)]If $\Omega_w(A,\omega)$ is strongly compact and strongly attracts $A$, then $\Omega_s(A,\omega)=\Omega_w(A,\omega)$.
\end{enumerate}
\end{lemma}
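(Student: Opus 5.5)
We argue directly from the two equivalent descriptions of $\Omega_\bullet(A,\omega)$ and the properties (a), (b) of the weak metric. Note that the first formula, read literally with $t\le s\le 0$, should be understood with $t\to+\infty$. The second (sequential) description is the one we use throughout, and the intersection description matches it once $\bigcup_{t\geq s}$ is taken with $s\to\infty$.

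For (1), $\Omega_\bullet(A,\omega)$ is an intersection of $d_\bullet$-closures, hence $d_\bullet$-closed. Alternatively, from the sequential form we can use a diagonal argument. Let $y_n\in\Omega_\bullet(A,\omega)$ with $y_n\to y$ in $d_\bullet$. For each $n$, choose $s_n\geq n$ and $x_n\in R(s_n,\theta_{-s_n}\omega)A$ with $d_\bullet(x_n,y_n)<1/n$. Then $x_n\to y$ in $d_\bullet$ and $s_n\to\infty$, so $y\in\Omega_\bullet(A,\omega)$.

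For (2), take $y\in\Omega_s(A,\omega)$ with witnessing sequences $s_k\to\infty$ and $x_k\in R(s_k,\theta_{-s_k}\omega)A$ such that $d_s(x_k,y)\to0$. By property (b) applied with $u_k=x_k$ and $v_k=y$, we get $d_w(x_k,y)\to0$, so $y\in\Omega_w(A,\omega)$.

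For (3), by (2) it suffices to show $\Omega_w(A,\omega)\subset\Omega_s(A,\omega)$. Let $y\in\Omega_w(A,\omega)$ with $s_k\to\infty$, $x_k\in R(s_k,\theta_{-s_k}\omega)A$ and $d_w(x_k,y)\to0$. Strong attraction gives
\[
d_s\bigl(R(s_k,\theta_{-s_k}\omega)A,\Omega_w(A,\omega)\bigr)\to0,
\]
so there are $z_k\in\Omega_w(A,\omega)$ with $d_s(x_k,z_k)\to0$. Since $\Omega_w(A,\omega)$ is strongly compact, a subsequence satisfies $z_{k_j}\to z$ strongly for some $z$. Then $d_s(x_{k_j},z)\to0$, so $z\in\Omega_s(A,\omega)$.

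The main obstacle is to identify $z$ with $y$. Property (b) gives $d_w(x_{k_j},z)\to0$. Since also $d_w(x_{k_j},y)\to0$, uniqueness of limits in the metric $d_w$ gives $y=z\in\Omega_s(A,\omega)$. This step uses only that strong convergence implies weak convergence, which is exactly property (b) with a constant second sequence.
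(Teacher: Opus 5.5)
Your proof is correct and follows the same route as the paper: (1) and (2) come directly from the definition of $\Omega_\bullet(A,\omega)$ and property (b), and (3) uses strong attraction to produce nearby points $z_k\in\Omega_w(A,\omega)$, then strong compactness, then identification of the limit through $d_w$. Your version of (3) is slightly more careful than the paper's: you explicitly pass to a strongly convergent subsequence and invoke uniqueness of $d_w$-limits, which is the step the paper compresses into ``this convergence is also strong.''
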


\begin{proof}
Part $(1)$ is trivial by definition of $\Omega_\bullet-$limit set. Part $(2)$ follows from the relationship between strong and weak convergence. So it is only left to prove part $(3)$.

Taking any $x\in \Omega_w(A,\omega)$, by the definition of weak $\Omega-$limit set, there exists a sequence $s_k\geq t$, $s_k\rightarrow\infty$ as $k\rightarrow\infty$ and a sequence $x_k\in R(s_k,\theta_{-s_k}\omega)A$, such that $x_k\rightarrow x$ weakly as $k\rightarrow\infty$. Since  $\Omega_w(A,\omega)$ strongly attracts $A$, then there exists a sequence $a_k\in \Omega_w(A,\omega)$ such that
$d_s(x_k,a_k)\rightarrow0$ as $k\rightarrow\infty.$ Thus $a_k\rightarrow x$ weakly as $k\rightarrow\infty.$ Because of $\Omega_w(A,\omega)$ is strongly compact, so this convergence is also strong. Therefore we have $x_k\rightarrow x$ strongly as $k\rightarrow\infty$ and $x\in \Omega_s(A,\omega)$, which completes the proof.
\end{proof}

Usually $\Omega_\bullet(A,\omega)$ is the minimal $d_\bullet-$closed and random $d_\bullet-$ attracting set. Next, we consider the $\mathcal{F}$ measurability of $\Omega_\bullet(A,\omega)$, which is a result in \cite{33}.
\begin{lemma}
If $K\in P(X)$ is the random absorbing set of random evolutionary semigroup $\{ R(t, \omega)\}$ and the mapping $\omega\rightarrow R(t,\omega)K(\omega)$ is measurable with respect to $\mathcal{F}$ for every $t\in \mathbb{R}^+$, then the $\Omega_\bullet-$limit set $\Omega_\bullet(K,\omega)$ is also measurable with respect to $\mathcal{F}$.
\end{lemma}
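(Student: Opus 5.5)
The plan is to reduce the $\mathcal{F}$-measurability of $\Omega_\bullet(K,\omega)$ to countable set operations on maps already known to be measurable. I will use the standard criterion: a closed-valued multifunction $\omega\mapsto C(\omega)$ into a separable metric space is measurable iff $\omega\mapsto d_\bullet(x,C(\omega))$ is $\mathcal{F}$-measurable for each $x$ in a countable dense subset.

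The first step is to use absorption to replace $D$ by $K(\theta_{-t}\omega)$-type data. Since $K$ is random absorbing, for $t$ large we have $R(t,\theta_{-t}\omega)K\subset K(\omega)$. Using property (3) of the random evolutionary semigroup, I would rewrite the tail unions $\bigcup_{t\ge s}R(t,\theta_{-t}\omega)K$ through the maps $\omega\mapsto R(t,\omega)K(\omega)$, evaluated at $\theta_{-t}\omega$. Because $\theta_{-t}$ is measurable, the hypothesis then gives that each $\omega\mapsto R(t,\theta_{-t}\omega)K(\theta_{-t}\omega)$ is measurable.

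The second step replaces the uncountable union over $t\ge s$ by a countable one. I would use the $\mathcal{B}(\mathbb{R}^+)\times\mathcal{F}\times\mathcal{B}(X)$ joint measurability of $R(\cdot,\cdot)\cdot$ (Lemma \ref{measurable}, i.e.\ weak upper semicontinuity). With it, $\overline{\bigcup_{t\ge s}R(t,\theta_{-t}\omega)K}^\bullet$ should coincide with the closure of a union over rational $t\ge s$. Failing that, I would argue measurability of the union over $t$ directly, by projecting the jointly measurable graph; a measurable projection theorem is valid for complete probability spaces. The union of countably many measurable multifunctions is measurable, and taking the $d_\bullet$-closure preserves measurability, since $d(x,\bar C)=d(x,C)$.

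The last step handles the intersection over $s$. The sets $F_n(\omega)=\overline{\bigcup_{t\ge n}R(t,\theta_{-t}\omega)K}^\bullet$ are decreasing in $n$ and closed in the compact space $(X,d_w)$. For decreasing compact sets one has $d(x,\bigcap_n F_n)=\lim_n d(x,F_n)$, which is therefore measurable in the weak case. In the strong case I would additionally use that the sets lie in the compact absorbing set, or argue via the sequential characterization of $\Omega_\bullet$.

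I expect the main obstacle to be the second step, replacing the continuum union by a countable one. Doing so needs some continuity in $t$ of trajectories, which holds because elements of $\mathcal{E}^\theta_\omega$ are $d_\bullet$-continuous. Otherwise one must invoke a projection theorem, and that requires completeness of $\mathcal{F}$.
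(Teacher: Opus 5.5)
The paper gives no argument for this lemma; it is quoted from the cited literature. Your proposal therefore has to stand on its own, and it has a genuine gap exactly where you locate the difficulty: none of the three remedies you offer in Step 2 works.

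\begin{itemize}
\item Joint $\mathcal{B}(\mathbb{R}^+)\times\mathcal{F}\times\mathcal{B}(X)$-measurability carries no information about the topology in $t$. Altering the family at a single irrational time keeps it jointly measurable. Yet it can change $\overline{\bigcup_{t\ge s}R(t,\theta_{-t}\omega)K(\theta_{-t}\omega)}^\bullet$ while leaving the union over rational times untouched.
\item Continuity of trajectories is not part of Definition \ref{evsys}: the elements of $\Xi(I)$ are arbitrary functions, and in Section 6 they are only weakly continuous. It would not help anyway. In $R(t,\theta_{-t}\omega)K(\theta_{-t}\omega)$, what moves with $t$ is the sample $\theta_{-t}\omega$ and the initial set $K(\theta_{-t}\omega)$. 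Continuity of an individual trajectory in time gives no control over how these sets vary.
\item The measurable projection theorem yields measurability only with respect to the $P$-completion of $\mathcal{F}$, which is weaker than the claimed $\mathcal{F}$-measurability. In Section 6, $\mathcal{F}=\mathcal{B}(\Omega)$ is not complete for Wiener measure. The theorem would also need a product-measurable graph for $(t,\omega)\mapsto R(t,\theta_{-t}\omega)K(\theta_{-t}\omega)$, which the fixed-$t$ hypothesis does not supply.
\end{itemize}

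The missing idea is a real use of the absorption you invoke in Step 1. There you only draw the trivial conclusion that each fiber map $\omega\mapsto R(t,\theta_{-t}\omega)K(\theta_{-t}\omega)$ is measurable. Read property (3) as $R(t,\omega)A\subset R(t-r,\theta_r\omega)R(r,\omega)A$ for $t\ge r\ge 0$; together with absorption, it eliminates the continuum union.

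Suppose $K$ absorbs its own fibers after a sample-independent time $t_0$. The paper's definition gives this with $D=X$, since there $t_0=t_0(D)$ does not depend on $\omega$. Write $t=m+r$ with $m\in\mathbb{N}$ and $r\in[t_0,t_0+1)$. Then
\[
R(t,\theta_{-t}\omega)K(\theta_{-t}\omega)\subset R(m,\theta_{-m}\omega)\,R(r,\theta_{-r}\theta_{-m}\omega)K(\theta_{-r}\theta_{-m}\omega)\subset R(m,\theta_{-m}\omega)K(\theta_{-m}\omega).
\]
Hence
\[
\Omega_\bullet(K,\omega)=\bigcap_{n\in\mathbb{N}}\overline{\bigcup_{m\ge n,\,m\in\mathbb{N}}R(m,\theta_{-m}\omega)K(\theta_{-m}\omega)}^\bullet ,
\]
and your Step 3 applies with no projection theorem and no completeness.

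If the absorption time depends on $\omega$, the usual substitute is negative invariance of the limit set combined with $\Omega_\bullet(K,\cdot)\subset K(\cdot)$ for closed $K$. This gives $\Omega_\bullet(K,\omega)\subset R(m,\theta_{-m}\omega)K(\theta_{-m}\omega)$ for every $m$.

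Separately, the strong case of your Step 3 relies on compactness of the absorbing set, which is not a hypothesis; the ball $\mathbb{Y}(\omega)$ of Section 6 is only weakly compact. Without compactness, $d_s(x,\bigcap_nF_n)=\lim_n d_s(x,F_n)$ fails, e.g. for $F_n=\{e_k:k\ge n\}$ with $(e_k)$ orthonormal. That case therefore needs an extra assumption such as $s$-asymptotic compactness, under which $\Omega_s=\Omega_w$ by Lemma \ref{l4.1}.
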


\begin{lemma}\label{l3.2}
Let $\{A_\bullet(\omega)\}$ be a  family of $d_\bullet-$closed, random $d_\bullet-$attracting set and measurable with respect to $\mathcal{F}$. Then we have
$$\Omega_\bullet(A,\omega)  \subset A_\bullet(\omega),\quad \forall A\in  P(X), \omega\in\Omega.$$
\end{lemma}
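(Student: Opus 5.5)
We argue pointwise in $\omega$, using the sequential characterization of $\Omega_\bullet(A,\omega)$ together with the definition of a random $d_\bullet$-attracting set; measurability of $A_\bullet$ plays no role in the argument. Fix $A\in P(X)$, $\omega\in\Omega$, and take any $y\in\Omega_\bullet(A,\omega)$. By the equivalent description of the $\Omega_\bullet$-limit set, there are times $s_k\geq0$ with $s_k\to\infty$ and points $x_k\in R(s_k,\theta_{-s_k}\omega)A$ such that $d_\bullet(x_k,y)\to0$ as $k\to\infty$. The goal is to show $d_\bullet(y,A_\bullet(\omega))=0$. Closedness of $A_\bullet(\omega)$ then gives $y\in A_\bullet(\omega)$.

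The key step converts the attraction property, stated in terms of the semi-distance $d_\bullet(R(t,\theta_{-t}\omega)A,A_\bullet(\omega))$, into a pointwise estimate. We read $d_\bullet(B,C)$ as the Hausdorff semi-distance $\sup_{b\in B}d_\bullet(b,C)$, which is the standard meaning for attractors. Since $A_\bullet$ random $d_\bullet$-attracts $A$, for every $\varepsilon>0$ there is $T$ such that $d_\bullet(R(t,\theta_{-t}\omega)A,A_\bullet(\omega))<\varepsilon$ for all $t\geq T$. Taking $t=s_k$ with $k$ large gives $d_\bullet(x_k,A_\bullet(\omega))<\varepsilon$. Hence there exist $a_k\in A_\bullet(\omega)$ with $d_\bullet(x_k,a_k)\to0$.

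The triangle inequality then gives
$$d_\bullet(y,a_k)\leq d_\bullet(y,x_k)+d_\bullet(x_k,a_k)\to0,$$
so $a_k\to y$ in $d_\bullet$. Since $A_\bullet(\omega)$ is $d_\bullet$-closed, $y\in A_\bullet(\omega)$. As $y$, $A$ and $\omega$ were arbitrary, the inclusion holds for all $A\in P(X)$ and $\omega\in\Omega$.

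The only delicate point is interpretive. The proof requires that the attracting property hold for the particular set $A$ in question, with the sup-semi-distance reading above. The definition quantifies over $D\in P(X)$, so this is available. If instead one only had attraction of bounded sets, one would first reduce to that case, but as stated no such step is needed. In the strong case one can also note that $d_s$ is an honest metric on $X$, so nothing changes between $\bullet=s$ and $\bullet=w$.
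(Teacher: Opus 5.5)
Your proof is correct and is essentially the paper's argument. The paper argues by contradiction, taking $x\in\Omega_\bullet(A,\omega)\setminus A_\bullet(\omega)$ approximated by points of $R(s_k,\theta_{-s_k}\omega)A$ and invoking attraction; you run the same sequence-plus-attraction argument directly, and you make explicit two things the paper leaves implicit in its ``contradiction with the choice of $x$'': the use of $d_\bullet$-closedness of $A_\bullet(\omega)$ and the Hausdorff semi-distance reading of attraction. You also correctly observe that measurability is never used.
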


\begin{proof}
On the contrary, we assume that there exists a point $x\in \Omega_\bullet(A,\omega)  \setminus A_\bullet(\omega)$. By the definition of $\Omega_\bullet-$limit set, there exists  a sequence $s_k\geq 0$, $s_k\rightarrow\infty$ as $k\rightarrow\infty$ and $x_k\in A$ such that
$$\lim_{k\rightarrow\infty}d_\bullet(R(s_k,\theta_{-s_k}\omega)x_k,x)=0.$$

But $\{A_\bullet(\omega)\}$ is random $d_\bullet-$attracting set, so we have
$$\lim_{k\rightarrow\infty}d_\bullet(R(s_k,\theta_{-s_k}\omega)x_k,A_\bullet(\omega))=0,$$
which contradicts with the choice of $x$. The proof is complete.
\end{proof}


\begin{theorem}\label{t3.1}
If $\{\mathcal{A}_\bullet(\omega)\}$ exists, then
$$\mathcal{A}_\bullet(\omega)=\Omega_\bullet(A,\omega),\quad A\subset P(X).$$
\end{theorem}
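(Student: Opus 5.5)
The statement is ambiguous as written ("$A\subset P(X)$"), so I will read it the way it must be meant for the theorem to be true. Namely, $\mathcal{A}_\bullet(\omega)=\Omega_\bullet(X,\omega)$, i.e.\ the attractor equals the $\Omega_\bullet$-limit of the whole phase space. More generally the identity holds for any $A\in P(X)$ with $\mathcal{A}_\bullet(\omega)\subset A$ for all $\omega$. For an arbitrary set $A$ only one inclusion can hold, since a single point has a limit set that need not fill the attractor. I will prove both inclusions separately.

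\emph{First inclusion, $\Omega_\bullet(A,\omega)\subset\mathcal{A}_\bullet(\omega)$.} This holds for every $A\in P(X)$.
\begin{enumerate}
\item[(i)] By definition, $\mathcal{A}_\bullet(\omega)$ is $d_\bullet$-compact. In a metric topology this makes it $d_\bullet$-closed.
\item[(ii)] It is measurable with respect to $\mathcal{F}$ and is a random $d_\bullet$-attracting set.
\item[(iii)] Lemma \ref{l3.2} therefore applies directly and gives $\Omega_\bullet(A,\omega)\subset \mathcal{A}_\bullet(\omega)$ for all $\omega$.
\end{enumerate}

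\emph{Reverse inclusion, $\mathcal{A}_\bullet(\omega)\subset\Omega_\bullet(A,\omega)$, assuming $\mathcal{A}_\bullet(\theta_{-t}\omega)\subset A$ for all $t$ (e.g.\ $A=X$).} The tool is invariance.
\begin{enumerate}
\item[(i)] Fix $y\in\mathcal{A}_\bullet(\omega)$ and a sequence $s_k\to\infty$.
\item[(ii)] Apply invariance at the sample $\theta_{-s_k}\omega$:
$$R(s_k,\theta_{-s_k}\omega)\mathcal{A}_\bullet(\theta_{-s_k}\omega)=\mathcal{A}_\bullet(\omega).$$
Hence $y=x_k$ for some $x_k\in R(s_k,\theta_{-s_k}\omega)\mathcal{A}_\bullet(\theta_{-s_k}\omega)\subset R(s_k,\theta_{-s_k}\omega)A$. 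The last inclusion uses monotonicity of $R(t,\omega)$ in the set argument, which is immediate from the definition of $R$ through trajectories.
\item[(iii)] The constant sequence $x_k=y$ converges to $y$ in $d_\bullet$. By the sequential characterization of the $\Omega_\bullet$-limit, $y\in\Omega_\bullet(A,\omega)$.
\end{enumerate}
Combining the two inclusions gives the equality.

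The main obstacle is the reverse inclusion. It needs invariance in the pulled-back form $R(t,\theta_{-t}\omega)\mathcal{A}_\bullet(\theta_{-t}\omega)=\mathcal{A}_\bullet(\omega)$. This follows from the stated invariance by replacing $\omega$ with $\theta_{-t}\omega$, using that $\theta$ is a group.

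It also needs the hypothesis that $A$ contains the attractor fibers; this is why I read the theorem with $A=X$. If the authors intend an arbitrary $A$, I would instead state the result as $\mathcal{A}_\bullet(\omega)=\Omega_\bullet(X,\omega)$, with $\Omega_\bullet(A,\omega)\subset\mathcal{A}_\bullet(\omega)$ for every $A$. That version is also what yields the minimality remark.
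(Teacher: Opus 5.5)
Your argument is correct, and your reading of the statement is the right one. For an arbitrary $A$ only the inclusion $\Omega_\bullet(A,\omega)\subset\mathcal{A}_\bullet(\omega)$ can hold; take $A=\emptyset$ or a single point to see the equality fail. So the equality must be understood for $A=X$, or more generally for any $A$ containing every fiber $\mathcal{A}_\bullet(\theta_{-t}\omega)$, as in the deterministic identity $\mathcal{A}_\bullet=\omega_\bullet(X)$.

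Your first inclusion is the paper's, obtained from Lemma \ref{l3.2}. For the reverse inclusion the paper simply appeals to minimality of $\{\mathcal{A}_\bullet(\omega)\}$ among $d_\bullet$-closed random attracting sets. You instead use invariance in pullback form, $\mathcal{A}_\bullet(\omega)=R(s_k,\theta_{-s_k}\omega)\mathcal{A}_\bullet(\theta_{-s_k}\omega)\subset R(s_k,\theta_{-s_k}\omega)A$, together with the constant sequence in the sequential description of $\Omega_\bullet(A,\omega)$.

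Your route uses only properties built into the paper's definition of the attractor and is self-contained. The minimality route needs two facts the paper does not supply at that point:
\begin{enumerate}
\item[(i)] a proof of the minimality remark, which when written out is essentially your invariance argument applied to $D=X$;
\item[(ii)] the fact that $\Omega_\bullet(A,\omega)$ is itself a $d_\bullet$-closed random attracting set, so that it can serve as the competitor in the minimality statement. This is not established there and is false for general $A$.
\end{enumerate}
Minimality is the natural tool in the deterministic theory of evolutionary systems, where the global attractor is \emph{defined} as a minimal closed attracting set and invariance is not assumed. In the present random setting invariance is part of the definition, so your argument is the more direct and more rigorous one.
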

\begin{proof}
Thanks to Lemma \ref{l3.2}, we have $\Omega_\bullet(A,\omega)\subset \mathcal{A}_\bullet(\omega)$. Since $\{\mathcal{A}_\bullet(\omega)\}$ is the minimal random  $d_\bullet-$attracting set, so the reverse conclusion also holds. We complete the proof.
\end{proof}

We present the characterization of the existence of $d_\bullet-$global random attractor.

\begin{theorem}\label{t3.2}
Suppose that the random evolutionary semigroup $\{R(t,\omega)\}$ is weak upper semicontinuous.
For $\omega\in \Omega$, a family of $d_\bullet-$global random attractor $\{\mathcal{A}_\bullet(\omega)\}$  exists if and only if $\Omega_\bullet(A,\omega)$, $A\subset P(X)$ is a random  $d_\bullet-$attracting set.
\end{theorem}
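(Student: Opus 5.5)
The plan is to prove the two directions separately, taking $A=X$, which is the natural choice: $X$ is $d_w$-compact and every $D\in P(X)$ is contained in it.

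For necessity, assume $\{\mathcal{A}_\bullet(\omega)\}$ exists. By Theorem \ref{t3.1} we have $\mathcal{A}_\bullet(\omega)=\Omega_\bullet(A,\omega)$. Since $\mathcal{A}_\bullet$ is random $d_\bullet$-attracting by definition, so is $\Omega_\bullet(A,\omega)$, and this direction is done.

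For sufficiency, assume $\Omega_\bullet(X,\omega)$ is a random $d_\bullet$-attracting set and put $\mathcal{A}_\bullet(\omega):=\Omega_\bullet(X,\omega)$. We must check compactness, measurability, invariance and attraction; attraction holds by assumption.

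\begin{enumerate}
\item[(i)] \emph{Compactness.} By Lemma \ref{l3.1}(1) the set is $d_\bullet$-closed. For $\bullet=w$ it is therefore weakly compact, because $X$ is $d_w$-compact. For $\bullet=s$ I would use the attraction property: any sequence $a_n\in\Omega_s(X,\omega)$ is a strong limit of points $x_k\in R(s_k,\theta_{-s_k}\omega)X$. I expect to need, or to read into the hypothesis, that strong attraction combined with $\Omega_s\subset\Omega_w$ (weakly compact) gives the strong compactness used later in Section 4. Here I would interpret the statement so that the attracting set is compact in the appropriate topology, as the paper tacitly does.
\item[(ii)] \emph{Measurability.} $X$ itself is trivially an absorbing set, and $R(t,\cdot)X$ is measurable by weak upper semicontinuity and Lemma \ref{measurable}. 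The measurability lemma quoted from \cite{33} then gives $\mathcal{F}$-measurability of $\Omega_\bullet(X,\omega)$.
\item[(iii)] \emph{Invariance.} This is the main step.
\end{enumerate}

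For the inclusion $R(t,\omega)\mathcal{A}_\bullet(\omega)\subset\mathcal{A}_\bullet(\theta_t\omega)$, take $y=u(t)$ with $u\in\mathcal{E}^\theta_\omega([0,\infty))$ and $u(0)=x\in\Omega_\bullet(X,\omega)$. Write $x=\lim x_k$ with $x_k=u_k(s_k)\in R(s_k,\theta_{-s_k}\omega)X$. By property 2 of Definition \ref{evsys}, the shifted trajectories $u_k(\cdot+s_k)$ continue to time $s_k+t$, so $u_k(s_k+t)\in R(s_k+t,\theta_{-s_k-t}\theta_t\omega)X$. By weak upper semicontinuity of $R$ in the initial datum, and using $\theta_{-s_k}\omega$ against $\omega$ through the cocycle identity, a subsequence of $u_k(s_k+t)$ converges to some element of $R(t,\omega)x$. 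The delicate issue is that this limit must equal the \emph{given} $y$, not merely some point of $R(t,\omega)x$. Since the evolution is multivalued, I would therefore prove the weaker but usable form: each point reachable as such a limit lies in $\Omega_\bullet(X,\theta_t\omega)$.

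For the reverse inclusion, take $y\in\Omega_\bullet(X,\theta_t\omega)$ with $y=\lim v_k(s_k)$ and $s_k\to\infty$. Consider the points $v_k(s_k-t)\in R(s_k-t,\theta_{-(s_k-t)}\omega)X$. By weak compactness of $X$ they have a convergent subsequence with limit $x\in\Omega_w(X,\omega)$. Weak upper semicontinuity then gives $y\in R(t,\omega)x$, so $y\in R(t,\omega)\mathcal{A}_\bullet(\omega)$.

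I expect the main obstacle to be the first inclusion together with the strong-topology compactness. I would try first to restrict invariance to the weakest form that weak upper semicontinuity supports, namely limits of trajectories. If necessary, I would use closedness of the trajectory set under $C([0,T];X_w)$ limits, via the Arzelà–Ascoli principle in $X_w$, to identify the limit trajectory with $u$.
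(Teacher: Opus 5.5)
Your necessity direction is the paper's argument (via Theorem~\ref{t3.1}). The sufficiency direction, however, has genuine gaps at exactly the two places you flag, and neither fallback closes them.

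\textbf{Strong compactness.} For $\bullet=s$, strong compactness cannot be ``read into'' the hypothesis, because it does not follow from it. Take $\Omega$ a single point and $X$ the closed unit ball of an infinite-dimensional separable Hilbert space, with $d_s$ the norm metric and $d_w$ a metric for the weak topology. Let $\mathcal{E}(I)$ be the constant functions on $I$. Then $R(t,\omega)=\mathrm{Id}$ is weakly upper semicontinuous, and $\Omega_s(X,\omega)=X$ trivially strongly attracts $X$. Yet any strongly closed $K$ with $d_s(R(t,\theta_{-t}\omega)X,K)=d_s(X,K)\to0$ contains $X$, so it is not strongly compact. An extra hypothesis such as $s$-asymptotic compactness (as in Lemma~\ref{l4.1}) is indispensable. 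The same gap sits in your reverse inclusion for $\bullet=s$: the weak-compactness extraction only gives $x\in\Omega_w(X,\omega)$, and upgrading to $\Omega_s(X,\omega)$ needs a strongly convergent subsequence of $v_k(s_k-t)$.

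\textbf{Forward invariance.} The inclusion $R(t,\omega)\mathcal{A}_\bullet(\omega)\subset\mathcal{A}_\bullet(\theta_t\omega)$ fails for a more basic reason. Upper semicontinuity only shows that \emph{some} point of $R(t,\omega)x$ lies in $\Omega_\bullet(X,\theta_t\omega)$, and for $\bullet=s$ only in $\Omega_w$. An Arzel\`a--Ascoli limit of $u_k(\cdot+s_k)$ is determined by the $u_k$, not by your prescribed $u$. Definition~\ref{evsys} has no concatenation property that would let you attach a long past to $u$.

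The inclusion is in fact false in general. On $X=[0,1]$ with $d_s=d_w$ the usual metric and trivial $\Omega$, let $\mathcal{E}([0,\infty))=\{0\}\cup\{t\mapsto(t+s)e^{1-t-s}:s\ge0\}$, which is closed under taking tails, and let $\mathcal{E}([T,\infty))$ be its time shifts. Then $R(t)$ is upper semicontinuous, and $R(t)X=[0,te^{1-t}]$ for $t\ge1$, so $\Omega(X)=\{0\}$ attracts $X$. But $R(t)\{0\}=\{0,te^{1-t}\}$. Any closed attracting set contains $0$, since $0\in R(t)X$ for all $t$. Invariance then forces it to equal $[0,1]$, which is not invariant. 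So no compact invariant attracting set exists. Your ``weaker but usable form'' is therefore not the invariance the definition demands, and the ``if'' direction cannot be proved as stated.

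\textbf{Comparison with the paper.} The paper's proof does not get around these problems either. It writes $R(t+t_n,\theta_{-t-t_n}\theta_t\omega)x_n\rightharpoonup R(t,\omega)y$ as if $R$ were single-valued. It asserts the reverse inclusion ``by the definition of $\mathcal{A}_\bullet$'', and it never addresses compactness. Your reverse-inclusion argument for $\bullet=w$ (extract $v_k(s_k-t)\rightharpoonup x$, then use upper semicontinuity to get $y\in R(t,\omega)x$) is a genuine argument the paper lacks, granting upper semicontinuity for weakly convergent data. To obtain a true theorem you must either weaken the notion of attractor as in Cheskidov's framework (a minimal $d_\bullet$-closed attracting set, with invariance expressed through complete trajectories and $\widetilde R$) or add hypotheses.
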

\begin{proof}
Suppose that the left holds, i.e., $\{\mathcal{A}_\bullet(\omega)\}$ exists, then the above theorem implies that $\mathcal{A}_\bullet(\omega)=\Omega_\bullet(A,\omega)$, so $\Omega_\bullet(A,\omega)$ is a random $d_\bullet-$attracting set, we get the right.

If the right holds, by Lemma \ref{l3.1}, $\Omega_\bullet(A,\omega)$ is also  $d_\bullet-$closed set, thanks to Theorem \ref{2.1}, $\Omega_\bullet(A,\omega)$ is measurable with respect to $\mathcal{F}$, then it is only to verify invariance of $\Omega_\bullet(A,\omega)$.

Given any $y\in\Omega_\bullet(A,\omega)$, by definition, there exist $t_n\rightarrow\infty$ when $n\rightarrow\infty$ and $x_n\in K(\theta_{-t_n}\omega)$ such that
$R(t_n,\theta_{-t_n}\omega)x_n\rightarrow y$ weakly in $Y$ when $n\rightarrow\infty$. Thanks to weak upper semicontinuity,  for $t>0$, we have $R(t+t_n,\theta_{-t-t_n}\theta_{t}\omega)x_n$ weakly converges to $R(t,\omega)y$,
hence $R(t,\omega)y\in\Omega_\bullet(A,\theta_{t}\omega)$, $R(t,\omega)\Omega_\bullet(A,\omega)\subset \Omega_\bullet(A,\theta_{t}\omega)$.
The reverse conclusion holds by the definition of $\{\mathcal{A}_\bullet(\omega)\}$.
\end{proof}

Asymptotical compactness is very important for random evolutionary semigroup to obtain the global random attractor.

\begin{definition}
 If for each $s_k\geq0$, with $s_k\rightarrow\infty$ as $k\rightarrow\infty$, and each bounded sequence $\{x_k\}\subset D, D\in P(X)$, the sequence $\{R(s_k,\theta_{-s_k}\omega)x_k\}$ converges in $d_\bullet-$metric, then we say that $\{ R(t, \omega)\}$ is $\bullet-$asymptotically compact.
\end{definition}

Next, we present the existence of weak $\Omega-$limit set $\Omega_w(A,\omega),\omega\in \Omega, A\in P(X)$ and it's properties.

\begin{theorem}\label{t3.3}
Suppose that there exists a family of random absorbing set and the random evolutionary semigroup $\{R(t,\omega)\}$ is weak upper semicontinuous. Let $\omega \in \Omega$ and $A\in  P(X)$ be such that there exists $u\in \mathcal{E}^{\theta}_{\omega}([T,\infty))$ with $u(t)\in A$, $T\in \mathbb{R}$. Then $\Omega_w(A,\omega)$ is a nonempty weakly compact set, invariance and measurable. In addition, $\Omega_w(A,\omega)$  $d_w-$attracts $A$.
\end{theorem}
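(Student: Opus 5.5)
The argument rests on the $d_w$-compactness of $X$ (property (a) of the weak metric), combined with the nonemptiness of the trajectory sets and the weak upper semicontinuity of $R$. We treat nonemptiness and compactness first, then $d_w$-attraction, then invariance and measurability.

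\emph{Nonemptiness and weak compactness.} By the hypothesis there is $u\in\mathcal{E}^{\theta}_{\omega}([T,\infty))$ with values in $A$. Using properties 2 and 3 of Definition \ref{evsys} (shift and restriction), we produce, for each large $s_k\to\infty$, a point $x_k\in R(s_k,\theta_{-s_k}\omega)A$. Concretely, we take the shifted trajectory started at time $0$ from a point of $A$ and evaluate it at $s_k$; this is the step where the shift convention $\mathcal{E}^\theta_\omega(I+s)$ versus $\mathcal{E}^\theta_{\theta_s\omega}(I)$ must be tracked carefully. Since $X$ is $d_w$-compact, a subsequence converges in $d_w$, and its limit lies in $\Omega_w(A,\omega)$ by the sequential characterization of the $\Omega_\bullet$-limit set. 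Hence $\Omega_w(A,\omega)\neq\emptyset$. By Lemma \ref{l3.1}(1), $\Omega_w(A,\omega)$ is $d_w$-closed, and a closed subset of the compact space $(X,d_w)$ is compact.

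\emph{Attraction.} We argue by contradiction. Suppose there are $\varepsilon>0$, $s_k\to\infty$ and $x_k\in R(s_k,\theta_{-s_k}\omega)A$ with $d_w(x_k,\Omega_w(A,\omega))\ge\varepsilon$. By $d_w$-compactness of $X$, a subsequence $x_{k_j}$ converges in $d_w$ to some $y$. By definition $y\in\Omega_w(A,\omega)$, so $d_w(x_{k_j},\Omega_w(A,\omega))\le d_w(x_{k_j},y)\to 0$, which is a contradiction. The absorbing set is not needed for this step; it only guarantees that the relevant sets are eventually contained in $K(\omega)$, which is used for measurability.

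\emph{Invariance and measurability.} For invariance we reuse the argument of Theorem \ref{t3.2}. Given $y\in\Omega_w(A,\omega)$, realized as the limit of $R(t_n,\theta_{-t_n}\omega)x_n$, weak upper semicontinuity together with property (3) of the semigroup shows that every point of $R(t,\omega)y$ is a weak limit of elements of $R(t+t_n,\theta_{-(t+t_n)}\theta_t\omega)A$. Hence $R(t,\omega)\Omega_w(A,\omega)\subset\Omega_w(A,\theta_t\omega)$. For the reverse inclusion, take $z\in\Omega_w(A,\theta_t\omega)$ as the limit of points $R(t_n,\theta_{-t_n}\theta_t\omega)x_n$ with $t_n\to\infty$. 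We split each such trajectory at time $t_n-t$: the intermediate points $R(t_n-t,\theta_{-(t_n-t)}\omega)x_n$ have a $d_w$-convergent subsequence, by $d_w$-compactness, with limit $y\in\Omega_w(A,\omega)$. Weak upper semicontinuity then gives $z\in R(t,\omega)y$. We expect this reverse inclusion to be the main obstacle, since it requires the concatenation and restriction properties 2 and 3 to decompose trajectories at intermediate times. Finally, measurability follows from the measurability lemma preceding Lemma \ref{l3.2}, applied with the random absorbing set $K$ and combined with Lemma \ref{measurable}, which supplies the $\mathcal{F}$-measurability of $\omega\mapsto R(t,\omega)K(\omega)$.
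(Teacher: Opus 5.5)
\textbf{Invariance: the forward inclusion is not justified, and it can fail.} Your argument for $R(t,\omega)\Omega_w(A,\omega)\subset\Omega_w(A,\theta_t\omega)$ uses both hypotheses in the wrong direction.

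Weak upper semicontinuity says only this: if $y_n\to y$ and $z_n\in R(t,\omega)y_n$, then a subsequence of $(z_n)$ converges to \emph{some} point of $R(t,\omega)y$. It does not let you approximate a \emph{prescribed} $z\in R(t,\omega)y$ by points of $R(t,\omega)y_n$; that would be lower semicontinuity.

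Property (3) only decomposes a trajectory, $R(t+s,\omega)A\subset R(t,\theta_s\omega)R(s,\omega)A$. To put a point of $R(t,\omega)y_n$ into $R(t+t_n,\theta_{-t_n}\omega)A$ you need the reverse inclusion, i.e.\ concatenation, which evolutionary systems deliberately do not assume.

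Under the stated hypotheses the inclusion is in fact false. Take $\Omega$ a single point and $X=[0,1]$ with $d_w=d_s$. As trajectories on $[T,\infty)$ take the functions $xe^{-(t-T)}$, $x\in[0,1]$, together with the restrictions to $[T,\infty)$ of the translates $w(\cdot-a)$, $a\le T$, where $w(t)=t$ on $[0,1]$ and $w(t)=e^{1-t}$ for $t\ge 1$. Then $R(t)x=\{xe^{-t},w(t+x)\}$ is continuous and $X$ is absorbing. Since $R(t)X\subset[0,e^{1-t}]$ for $t\ge 1$, we get $\Omega_w(X)=\{0\}$, yet $1=w(1)\in R(1)\{0\}$.

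Your splitting argument at time $t_n-t$ for the other inclusion, $\Omega_w(A,\theta_t\omega)\subset R(t,\omega)\Omega_w(A,\omega)$, is correct, provided the upper semicontinuity holds for $d_w$-convergent initial data as in Proposition \ref{sem-con}. That is the part that survives; full invariance has to be phrased through $\widetilde R$ and complete trajectories, as in Section 5. The paper's proof takes invariance from Theorem \ref{t3.2}, whose argument is exactly the one you reproduce, and asserts your correct inclusion ``by definition''. So this hole is shared with the paper.

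\textbf{Nonemptiness and measurability.} Shifting the single trajectory $u\in\mathcal{E}^\theta_\omega([T,\infty))$ to start at time $0$ places it in one fixed fiber ($\theta_T\omega$, reading property 2 as the cocycle relation). Its values at the times $s_k$ therefore lie in $R(s_k,\theta_T\omega)A$, not in the pullback sets $R(s_k,\theta_{-s_k}\omega)A$, whose fiber moves with $k$. A point of $R(s_k,\theta_{-s_k}\omega)A$ needs a trajectory in $\mathcal{E}^\theta_\omega([-s_k,\infty))$ starting in $A$ at time $-s_k$. Restriction only shortens intervals, so such a trajectory cannot be obtained from $u$. Nonemptiness of $R(t,\theta_{-t}\omega)A$ for arbitrarily large $t$ must therefore be assumed; it holds for $A=X$ by property 1. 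The paper's proof simply takes $R(s_k,\theta_{-s_k}\omega)x_k$ for granted.

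For measurability, the lemma you cite concerns $\Omega_\bullet(K,\omega)$ for the absorbing family $K$, not $\Omega_w(A,\omega)$ for arbitrary $A$. Moreover, Lemma \ref{measurable} yields measurability of $(t,\omega,x)\mapsto R(t,\omega)x$ (under extra assumptions on $\Omega$ and $X$), not of $\omega\mapsto R(t,\omega)K(\omega)$.

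The compactness step (a $d_w$-closed subset of the $d_w$-compact $X$) is correct and cleaner than the paper's approximation argument. The attraction step coincides with the paper's.
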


\begin{proof}
Note first that there exists a time $t_0$ such that
$$\overline{\cup_{t\geq t_0}R(t,\theta_{-t}\omega)A}^w \mathrm{~is~ bounded}. $$
Now for any sequences $\{x_k\}\in A, A\subset X$ and $s_k\geq t_0$, with $s_k\rightarrow\infty$ as $k\rightarrow\infty$, it follows from the fact that $\{R(t,\omega)\}$ is $d_w-$asymptotically compact (this fact follows from $X$ is weakly compact), then there exists a subsequence of $\{R(s_k,\theta_{-s_k}\omega)x_k\}$ converges to some $y\in X$ weakly. Thus $y\in \Omega_w(A,\omega)$ and $\Omega_w(A,\omega)$ is non-empty. Invariance and measurable comes from Theorems \ref{t3.2} and \ref{2.1} respectively.

Next, we prove that $\Omega_w(A,\omega)$ $d_w-$attracts $A$ by contradiction. Assume that there exist  $\varepsilon>0$, a sequence $s_n\rightarrow\infty$ and a sequence $x_n\in A$, such that for all $n\in \mathbb{N}$
$$dist_w(R(s_n ,\theta_{-s_n}\omega)x_n,\Omega_w(A,\omega))\geq\varepsilon.$$
But we have just shown that there must be a subsequence of $\{R(s_n,\theta_{-s_n}\omega)x_n\}$ that converges to an element of $\Omega_w(A,\omega)$, which is a contradiction.

Finally, we show that $\Omega_w(A,\omega)$ is weakly compact. Given a sequence $\{y_k\}\in \Omega_w(A,\omega)$, there are $\{x_k\}\in B$ and $\{s_k\}\geq\min\{t_0,k\}$, such that
$$dist_w(R(s_k ,\theta_{-s_k}\omega)x_k,y_k)\leq\frac1k.$$
Since $\{R( s_k,\theta_{-s_k}\omega)x_k\}$ has a subsequence that weakly converges to an element of $\Omega_w(A,\omega)$, it follows that $\{y_k\}$ has a subsequence that converges to $y\in \Omega_w(A,\omega)$ and hence $\Omega_w(A,\omega)$ is weakly compact. The proof is complete.
\end{proof}

The following result tells us that if the strong global random attractor exists, then it is also the weak one.

\begin{theorem}
Suppose that there exists a family of random absorbing set and the random evolutionary semigroup $\{R(t,\omega)\}$ is weak upper semicontinuous,
then every random evolutionary system always possess a family of weak global random attractor $\{\mathcal{A}_w(\omega)\}$. If strong global random attractor $\{\mathcal{A}_s(\omega)\}$ exists, then we have $\overline{\mathcal{A}_s(\omega)}^w=\mathcal{A}_w(\omega)$, $\forall \omega\in \Omega$.
\end{theorem}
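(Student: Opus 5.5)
The plan has two parts. First, existence of $\{\mathcal{A}_w(\omega)\}$ comes from Theorem \ref{t3.3} applied with $A=K$, the random absorbing set. Second, the identity $\overline{\mathcal{A}_s(\omega)}^w=\mathcal{A}_w(\omega)$ comes from comparing the two as $\Omega$-limit sets via Theorem \ref{t3.1} and Lemma \ref{l3.1}.

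\emph{Existence.} By Theorem \ref{t3.3}, $\Omega_w(K,\omega)$ is nonempty, weakly compact, invariant and measurable, and it $d_w$-attracts $K$. To pass from attracting $K$ to attracting an arbitrary $D\in P(X)$, I use absorption: for $t\geq t_0(D)$ we have $R(t,\theta_{-t}\omega)D\subset K(\omega)$. Combined with property (3) of the random evolutionary semigroup, this places $R(t+s,\theta_{-t-s}\omega)D$ inside $R(s,\theta_{-s}\omega)K(\theta_{-s}\omega)$. Since $\Omega_w(K,\omega)$ $d_w$-attracts the latter, it $d_w$-attracts $D$. By Theorem \ref{t3.2} (the ``if'' direction), $\Omega_w(K,\omega)$ is then the weak global random attractor, and I set $\mathcal{A}_w(\omega):=\Omega_w(K,\omega)$.

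\emph{Identification.} Assume $\{\mathcal{A}_s(\omega)\}$ exists. By Theorem \ref{t3.1}, $\mathcal{A}_s(\omega)=\Omega_s(K,\omega)$ and $\mathcal{A}_w(\omega)=\Omega_w(K,\omega)$.
\begin{itemize}
\item[(i)] Lemma \ref{l3.1}(2) gives $\Omega_s\subset\Omega_w$. Since $\Omega_w$ is weakly closed, taking weak closure yields $\overline{\mathcal{A}_s(\omega)}^w\subset\mathcal{A}_w(\omega)$.
\item[(ii)] For the reverse inclusion, note that $\mathcal{A}_s(\omega)$ is strongly compact and $d_s$-attracts every $D$. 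By property (b) of the weak metric, $d_s(x_n,a_n)\to0$ implies $d_w(x_n,a_n)\to0$. Hence $\mathcal{A}_s(\omega)$, and a fortiori $\overline{\mathcal{A}_s(\omega)}^w$, is a $d_w$-attracting set. It is weakly closed, and measurable, since the weak closure of a strongly compact set equals the set itself (strongly compact implies weakly compact, hence $d_w$-closed).
\item[(iii)] Lemma \ref{l3.2} with $\bullet=w$ now gives $\Omega_w(K,\omega)\subset\overline{\mathcal{A}_s(\omega)}^w$.
\end{itemize}
Together, (i) and (iii) give equality.

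\emph{Main obstacle.} The delicate step is the transfer in the existence part. The semigroup inclusion in property (3) has the direction $R(t,\omega)A\subset R(t-s,\theta_s\omega)R(s,\omega)A$, so I must check that it points the right way when composed with absorption and with the pullback shift $\theta_{-t}$. If it does not, I would instead argue directly with sequences: take $x_k\in R(s_k,\theta_{-s_k}\omega)D$, use absorption to write $x_k$ as an image of points of $K$ along a shorter pullback time, extract a weakly convergent subsequence by $d_w$-compactness of $X$, and show the limit lies in $\Omega_w(K,\omega)$. Measurability of the weak closure is immediate here because it coincides with $\mathcal{A}_s(\omega)$.
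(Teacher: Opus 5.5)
Your proposal is correct and follows essentially the paper's route. Existence comes from Theorem \ref{t3.3} combined with Theorems \ref{t3.1} and \ref{t3.2}. The identification comes from Lemma \ref{l3.2}, applied to the weakly closed, $d_w$-attracting set $\overline{\mathcal{A}_s(\omega)}^w$, together with $\overline{\Omega_s}^w\subset\Omega_w$ from Lemma \ref{l3.1}.

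Your extra steps are sound refinements of points the paper skips. Property (3) does point the right way: with $\omega'=\theta_{-t-s}\omega$ it gives $R(t+s,\omega')D\subset R(s,\theta_{-s}\omega)R(t,\theta_{-t}\theta_{-s}\omega)D\subset R(s,\theta_{-s}\omega)K(\theta_{-s}\omega)$. Your observation that $\overline{\mathcal{A}_s(\omega)}^w=\mathcal{A}_s(\omega)$ supplies the measurability hypothesis that Lemma \ref{l3.2} needs.
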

\begin{proof}
According to Theorem \ref{t3.3}, $\Omega_w(A,\omega),A\subset P(X)$ is a $d_w-$closed and random $d_w-$attracting set, so Theorems \ref{t3.1} and \ref{t3.2} imply that $\{\mathcal{A}_w(\omega)\}$ exists and $\mathcal{A}_w(\omega)=\Omega_w(A,\omega).$

Next, we suppose that $\{\mathcal{A}_s(\omega)\}$ exists. Because $\overline{\mathcal{A}_s(\omega)}^w$ is a $d_s-$random attracting set, it is also a $d_w-$attracting and weakly closed set, by Lemma \ref{l3.2}, we have $\Omega_w(A,t)\subset\overline{\mathcal{A}_s(\omega)}^w$. Theorem \ref{t3.1} tolds us that $\mathcal{A}_s(\omega)=\Omega_s(A,\omega)$, so we get that $\Omega_w(A,\omega)\subset \overline{\Omega_s(A,\omega)}^w$. Since it is obvious that $\overline{\Omega_s(A,\omega)}^w\subset \Omega_w(A,\omega)$, hence due to Lemma \ref{l3.1}, we obtain that $A_w(\omega)=\overline{\mathcal{A}_s(\omega)}^w$, which completes the proof.
\end{proof}

\section{ Strong global random attractor}
In order to characterize the strong  global random attractor, similarly to the above section,  we should know about the properties of random $\Omega_s-$limit set, thus  the $s-$asymptotical compactness of random evolutionary semigroup  plays an important role.

\begin{lemma}\label{l4.1}
Let $\{R(t,\omega)\}$ be $s-$asymptotically compact. Also let $A\in  P(X)$ be such that there exists $u\in \mathcal{E}^{\theta}_{\omega}([T,\infty))$ with $u(T)\in A$, $T\in \mathbb{R}$. Then $\Omega_s(A,\omega)$ is a nonempty strongly compact set that $d_s-$random attracts $A$ and $\Omega_s(A,\omega)=\Omega_w(A,\omega)$.
\end{lemma}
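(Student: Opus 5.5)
The plan follows the deterministic argument of \cite{1,2}, using the sequential characterization of $\Omega_\bullet(A,\omega)$.

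\textbf{Nonemptiness.} The hypothesis gives a trajectory $u\in\mathcal{E}^{\theta}_{\omega}([T,\infty))$ with $u(T)\in A$. Using properties 2 and 3 of Definition \ref{evsys}, I shift and restrict $u$ so that it produces points $x_k\in R(s_k,\theta_{-s_k}\omega)A$ for a sequence $s_k\to\infty$. This is the only place the trajectory hypothesis enters, and it guarantees that the sets $R(s_k,\theta_{-s_k}\omega)A$ are nonempty. By $s-$asymptotic compactness, $\{x_k\}$ has a $d_s-$convergent subsequence. Its limit lies in $\Omega_s(A,\omega)$, so $\Omega_s(A,\omega)\neq\emptyset$.

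\textbf{Strong compactness.} Take $y_k\in\Omega_s(A,\omega)$. For each $k$, choose $s_k\ge k$ and $x_k\in R(s_k,\theta_{-s_k}\omega)A$ with $d_s(x_k,y_k)<1/k$; this is a diagonal choice from the sequential definition. By $s-$asymptotic compactness, a subsequence $x_{k_j}$ converges strongly to some $y$, and $y\in\Omega_s(A,\omega)$ by definition. Then $y_{k_j}\to y$ strongly, so every sequence in $\Omega_s(A,\omega)$ has a strongly convergent subsequence with limit in the set. Together with Lemma \ref{l3.1}(1), this shows $\Omega_s(A,\omega)$ is strongly compact.

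\textbf{Attraction.} I argue by contradiction, as in the proof of Theorem \ref{t3.3}. Suppose there are $\varepsilon>0$, $s_n\to\infty$ and $x_n\in R(s_n,\theta_{-s_n}\omega)A$ with $d_s(x_n,\Omega_s(A,\omega))\ge\varepsilon$. Asymptotic compactness yields a subsequence converging strongly to a point of $\Omega_s(A,\omega)$, which contradicts the distance bound.

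\textbf{Equality with $\Omega_w$.} By Lemma \ref{l3.1}(2), $\Omega_s(A,\omega)\subset\Omega_w(A,\omega)$. For the reverse inclusion, let $x\in\Omega_w(A,\omega)$, so $x_k\to x$ weakly with $x_k\in R(s_k,\theta_{-s_k}\omega)A$. Asymptotic compactness gives a subsequence converging strongly to some $z$. By property (b) of the weak metric, strong convergence implies weak convergence, so $x_{k_j}\to z$ weakly as well; $d_w$ is a metric, so weak limits are unique and $z=x$. Hence $x\in\Omega_s(A,\omega)$. Alternatively, one could invoke Lemma \ref{l3.1}(3) using the compactness and attraction just proved.

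\textbf{Main obstacle.} The only delicate point is interpreting $s-$asymptotic compactness as the existence of a convergent \emph{subsequence}, not convergence of the whole sequence as the definition literally reads; I will read it as the subsequence version. Everything else is a routine diagonal argument.
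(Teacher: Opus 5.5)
Your four steps (nonemptiness from $s$-asymptotic compactness, attraction by contradiction, strong compactness by a diagonal choice, and upgrading weak limits to strong ones to get $\Omega_w(A,\omega)\subset\Omega_s(A,\omega)$) are exactly the paper's proof. Your compactness and equality steps are argued more carefully than the paper's one-liners.

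The step that fails as written is your derivation of nonemptiness of the pullback sets. Property 2 shifts the sample together with the time: from $u\in\mathcal{E}^{\theta}_{\omega}([T,\infty))$ you get $u(\cdot+r)\in\mathcal{E}^{\theta}_{\theta_r\omega}([T-r,\infty))$. A point of $R(s,\theta_{-s}\omega)A$ requires a trajectory in $\mathcal{E}^{\theta}_{\theta_{-s}\omega}([0,\infty))$ with initial value in $A$. That forces $r=-s$ and $r=T$, i.e.\ $s=-T$.

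Restriction (property 3) does not help. It only moves the initial time to some $T'$ where $u(T')$ need not lie in $A$. In any case, a trajectory of the fiber $\omega$ that begins at time $T$ is not defined at the earlier times $-s<T$ that pullback requires. So one trajectory gives points of the forward sets $R(t,\theta_T\omega)A$ for all $t\ge 0$, but a pullback point only for the single time $s=-T$. It never gives a sequence $s_k\to\infty$.

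What you actually need is trajectories $u_k\in\mathcal{E}^{\theta}_{\omega}([-s_k,\infty))$ with $u_k(-s_k)\in A$ for some $s_k\to\infty$. This amounts to reading the hypothesis as holding for every $T\in\mathbb{R}$, and then $u_k(0)\in R(s_k,\theta_{-s_k}\omega)A$. Equivalently, assume $R(s,\theta_{-s}\omega)A\neq\emptyset$ for all large $s$. The paper's proof simply presupposes this when it picks elements of $R(s_n,\theta_{-s_n}\omega)x_n$, so the defect lies partly in the statement. Still, the shift-and-restrict argument you propose does not supply it.

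There are two smaller points. First, the definition of $s$-asymptotic compactness only covers bounded sequences of initial data, so all your steps need the initial points taken from $A$ to be bounded. The paper ignores this too; it holds in the application, since all trajectories lie in $\mathbb{Y}(\omega)$. Second, your alternative via Lemma~\ref{l3.1}(3) is circular as stated, because that lemma requires $\Omega_w(A,\omega)$ itself to be strongly compact. Its argument does go through with $\Omega_s(A,\omega)$ in place of $\Omega_w(A,\omega)$, but your direct uniqueness-of-weak-limits argument is cleaner anyway.
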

\begin{proof}
First of all, since $\{R(t,\omega)\}$ is $s-$asymptotically compact, then for any sequence $x_n\in A$ and any sequence $s_n\rightarrow\infty$ as $n\rightarrow\infty$, $\{R(s_n,\theta_{-s_n})x_n\}$ has a convergence subsequence, and by definition, the limit of this subsequence must be an element of $\Omega_s(A,\omega)$, which shows that $\Omega_s(A,\omega)$ is nonempty.

Next, we prove that $\Omega_s(A,\omega)$ random $d_s-$attracts $A$ by contradiction. Assume that there exist a $\varepsilon_0>0$, a sequence $s_n\rightarrow\infty$ as $n\rightarrow\infty$ and a sequence $x_n\in A$ such that
$$dist_s(R(s_n,\theta_{-s_n})x_n,\Omega_s(A,\omega))\geq\varepsilon_0, ~\mathrm{for~ all }~n\in \mathbb{N}.$$
But we have shown that there exists a subsequence of $\{R(s_n,\theta_{-s_n})x_n\}$ that converges to an element of $\Omega_s(A,\omega)$, which is a contradiction.

Finally, on one hand we know that $\Omega_s(A,\omega)\subset \Omega_w(A,\omega)$. On the other hand, if $x\in \Omega_w(A,\omega)$, by definition, there exist sequence $s_n\geq 0$, $s_n\rightarrow\infty$ as $n\rightarrow\infty$, and a sequence $x_n\in A$ such that $R(s_n,\theta_{-s_n})x_n$ weakly converges to $x$. But $R(t,\omega)$ is $s-$asymptotically compact, so $R(s_n,\theta_{-s_n})x_n$ also strongly converges to $x$ and $x\in \Omega_s(A,\omega)$, hence $\Omega_s(A,\omega)=\Omega_w(A,\omega), \omega\in\Omega$.

The strongly compactness of $\Omega_s(A,\omega)$ comes from the definition of $\Omega_s(A,\omega)$ and $\{R(t,\omega)\}$ is $s-$asymptotically compact. The proof is complete.
\end{proof}

At the end of this section, we have the existence of strong global random attractor.

\begin{theorem}\label{t4.1}
Assume that the random evolutionary semigroup $\{R(t,\omega)\}$ is weak upper semicontinuous,
 it is $s-$asymptotically compact and there exists a family of random absorbing set. Then $\{\mathcal{A}_s(\omega)\}$ exists and $\{\mathcal{A}_s(\omega)\}=\{\mathcal{A}_w(\omega)\}$.
\end{theorem}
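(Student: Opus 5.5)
The plan is to assemble the earlier results rather than argue from scratch. First, from the random absorbing set together with weak upper semicontinuity, the theorem at the end of Section 3 (based on Theorem \ref{t3.3}) gives the weak global random attractor $\{\mathcal{A}_w(\omega)\}$. By Theorem \ref{t3.1} it satisfies $\mathcal{A}_w(\omega)=\Omega_w(A,\omega)$. Here $A$ is taken to be the absorbing set $K$, or any $A\in P(X)$ containing the initial value $u(T)$ of some trajectory $u\in\mathcal{E}^{\theta}_{\omega}([T,\infty))$; such a trajectory exists by property 1 of Definition \ref{evsys}. Next, since $\{R(t,\omega)\}$ is $s$-asymptotically compact, Lemma \ref{l4.1} applies to the same $A$. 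It yields that $\Omega_s(A,\omega)$ is nonempty, strongly compact, random $d_s$-attracts $A$, and coincides with $\Omega_w(A,\omega)$.

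With this in hand, the second step is to invoke the characterization Theorem \ref{t3.2} with $\bullet=s$. Since $\Omega_s(A,\omega)$ is a random $d_s$-attracting set, the $d_s$-global random attractor exists. Checking the defining properties directly goes as follows. Compactness comes from Lemma \ref{l4.1}. Measurability follows because $\Omega_s(A,\omega)=\Omega_w(A,\omega)$ and the latter is measurable by Theorem \ref{t3.3}, or alternatively from the measurability lemma for $\Omega_\bullet(K,\omega)$. Invariance transfers verbatim from the invariance of $\Omega_w(A,\omega)$ proved in Theorem \ref{t3.3}/Theorem \ref{t3.2}, again because the two sets are equal. Setting $\mathcal{A}_s(\omega):=\Omega_s(A,\omega)$, we get $\mathcal{A}_s(\omega)=\Omega_w(A,\omega)=\mathcal{A}_w(\omega)$ for every $\omega$. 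As a consistency check, this agrees with the relation $\overline{\mathcal{A}_s(\omega)}^w=\mathcal{A}_w(\omega)$ from the preceding theorem, since a strongly compact set is weakly compact and hence weakly closed.

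The main obstacle is uniformity over the choice of $A$, since attraction must hold for every $D\in P(X)$. I would handle it through the absorbing set. For each $D$, $R(t,\theta_{-t}\omega)D\subset K(\omega)$ for $t\ge t_0(D)$. Using the cocycle inclusion (3), which gives $R(t+t_0,\theta_{-t-t_0}\omega)D\subset R(t,\theta_{-t}\omega)R(t_0,\theta_{-t-t_0}\omega)D$, attraction of $D$ reduces to attraction of the absorbing set. So it suffices to work with $A=K$, and one checks that $\Omega_s(K,\omega)$ then attracts every $D$. A second point to watch is that the strong and weak convergences in the invariance argument are compatible. Lemma \ref{l4.1} removes this issue, because under $s$-asymptotic compactness every weakly convergent sequence $R(s_n,\theta_{-s_n}\omega)x_n$ converges strongly to the same limit.
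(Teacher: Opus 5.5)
Your proposal is correct and follows the same route as the paper. Lemma \ref{l4.1} gives that $\Omega_s(A,\omega)$ is strongly compact, random $d_s$-attracting and equal to $\Omega_w(A,\omega)$, and Theorem \ref{t3.2} (with the weak attractor from Section 3) then gives existence of $\{\mathcal{A}_s(\omega)\}$ and $\mathcal{A}_s(\omega)=\mathcal{A}_w(\omega)$. Your reduction of an arbitrary $D$ to the absorbing set through the cocycle inclusion is a detail the paper's two-line proof omits, not a different approach; note that it lands in $R(t,\theta_{-t}\omega)K(\theta_{-t}\omega)$, so the argument of Lemma \ref{l4.1} has to be run with $x_n\in K(\theta_{-s_n}\omega)$.
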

\begin{proof}
As a consequence of Lemma \ref{l4.1}, $\Omega_s(A,\omega)$ is a $d_s-$random attracting set and $\Omega_s(A,\omega)=\Omega_w(A,\omega)=\mathcal{A}_s(\omega)$. Thanks to Theorem \ref{t3.2}, strong and weak global random attractor exist and equal, i.e., $\{\mathcal{A}_w(\omega)\}=\{\mathcal{A}_s(\omega)\}$. The proof is complete.
\end{proof}
\section{Invariance and tracking features}

In order to extend the notion of invariance from a classical semiflow to a random evolutionary semigroup, we need the following mapping:
$$\widetilde{R}(t,\omega)A:=\{u(t):u(0)\in A,u\in\mathcal{E}^{\theta}_{\omega}((-\infty,\infty))\},A\in  P(X),t\geq0, \omega\in\Omega.$$

\begin{definition}\label{inv}
A family of sets $\{B(\omega)\}$ is positive invariant if
$$\widetilde{R}(t,\omega)B(\omega)\subset B(\theta_t\omega),\forall t\geq0, \omega\in\Omega.$$

$\{B(\omega)\}$ is invariant if
$$\widetilde{R}(t,\omega)B(\omega)= B(\theta_t\omega),\forall t\geq0, \omega\in\Omega.$$

$\{B(\omega)\}$ is quasi-invariant if for every $a\in B(\omega)$, there exists a complete trajectory $u\in \mathcal{E}^{\theta}_{\omega}((-\infty,\infty))$ with $u(0)=a$ and $u(s)\in B(\theta_{s}\omega)$ for all $s$.
\end{definition}

Combining with Lemma \ref{l3.1}, it implies that $\forall \omega\in \Omega$, if $\{B(\omega)\}$ is quasi-invariant
$$B(\omega)\subset \Omega_s(A,\omega)\subset \Omega_w(A,\omega).$$


\begin{theorem}\label{t5.1}
Let the random evolutionary semigroup $\{R(t,\omega)\}$ is weak upper semicontinuous in $X$ for every $t\in \mathbb{R}^+, \omega\in\Omega$. Then $\{\Omega_w( A,\omega)\}$ is quasi-invariant for every $A\in  P(X)$.
\end{theorem}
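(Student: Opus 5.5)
Given $a\in\Omega_w(A,\omega)$, I would build a complete trajectory through $a$ that stays in $\Omega_w(A,\theta_s\omega)$ for all $s$, by a diagonal argument on trajectories, following the deterministic evolutionary-system argument of \cite{1}. The first step is to pick, from the definition of $\Omega_w(A,\omega)$, sequences $s_k\to\infty$ and $x_k\in R(s_k,\theta_{-s_k}\omega)A$ with $x_k\to a$ in $d_w$. Each $x_k$ has the form $u_k(s_k)$ for some $u_k\in\mathcal{E}^{\theta}_{\theta_{-s_k}\omega}([0,\infty))$ with $u_k(0)\in A$. By property 2 of Definition \ref{evsys}, the shifted function $v_k(t):=u_k(t+s_k)$ belongs to $\mathcal{E}^{\theta}_{\omega}([-s_k,\infty))$ and satisfies $v_k(0)=x_k$. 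The task is then to extract a limit $v$ of the $v_k$ defined on all of $\mathbb{R}$.

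The compactness step comes next. Fix $T>0$ and look at $v_k|_{[-T,\infty)}$ for $k$ large. I would use the weak upper semicontinuity hypothesis, read as closedness and compactness of the trajectory set $\mathcal{E}^\theta_\omega([-T,\infty))$ in $C([-T,\infty);X_w)$; this parallels the role that the same hypothesis plays in the proof of Theorem \ref{t3.2}, where $R(t+t_n,\cdot)x_n\rightharpoonup R(t,\omega)y$. It yields a subsequence converging in $C([-T,\infty);X_w)$ to some element of $\mathcal{E}^\theta_\omega([-T,\infty))$, and $X$ being $d_w$-compact supplies the pointwise relative compactness. I would then take $T=1,2,3,\dots$ and pass to a diagonal subsequence, producing a function $v$ on $\mathbb{R}$ with $v|_{[-T,\infty)}\in\mathcal{E}^\theta_\omega([-T,\infty))$ for every $T$. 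Property 4 of Definition \ref{evsys} gives $v\in\mathcal{E}^\theta_\omega((-\infty,\infty))$, and $v(0)=\lim x_k=a$.

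The last step is to check that $v(s)\in\Omega_w(A,\theta_s\omega)$ for every $s$. Since $v_k(s)=u_k(s+s_k)$ and $u_k$ starts in $A$ under the sample $\theta_{-s_k}\omega=\theta_{-(s_k+s)}\theta_s\omega$, I get $v_k(s)\in R(s_k+s,\theta_{-(s_k+s)}\theta_s\omega)A$ with $s_k+s\to\infty$. This is valid for $s$ negative as well once $k$ is large, using property 3 to restrict and property 2 to shift the sample. Because $v_k(s)\to v(s)$ weakly, the sequential characterization of $\Omega_w$ gives $v(s)\in\Omega_w(A,\theta_s\omega)$.

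The main obstacle is the compactness step. The hypothesis is stated only as weak upper semicontinuity of $R$, so I must justify the extraction of a locally uniformly $d_w$-convergent subsequence whose limit is again a trajectory; this is the random analogue of the closedness axiom in \cite{1}. My first attempt would be to combine $d_w$-compactness of $X$ with upper semicontinuity at rational times, getting pointwise limits along a diagonal subsequence. I would then invoke upper semicontinuity of the multivalued map $x\mapsto\{u:u(0)=x\}$ to identify the limit as a trajectory.
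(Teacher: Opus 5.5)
\textbf{Gap: the compactness step.} This step cannot be obtained from the hypothesis of Theorem \ref{t5.1}. Weak upper semicontinuity of $R(t,\omega)$ concerns reachable sets at one fixed time: if $x_n\to x$ and $z_n\in R(t,\omega)x_n$, then some subsequence of $z_n$ converges weakly to a point of $R(t,\omega)x$.

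Reading it as compactness of $\mathcal{E}^\theta_\omega([-T,\infty))$ in $C([-T,\infty);X_w)$ replaces it with a strictly stronger, trajectory-level assumption. That is the assumption the paper imposes separately in Theorem \ref{t5.3} (cf.\ A1 in Proposition \ref{precom}, checked for Navier--Stokes in Lemma \ref{A1}). Your sketch for deriving it fails for two reasons:
\begin{enumerate}
\item $d_w$-compactness of $X$ plus a diagonal subsequence gives limits $z(q)$ of $v_k(q)$ only at rational $q$. Without $d_w$-equicontinuity of the $v_k$, these need not extend to a $d_w$-continuous function, and the convergence need not be locally uniform.
\item More decisively, applying upper semicontinuity from time $-T$ yields only $z(q)\in R(q+T,\theta_{-T}\omega)z(-T)$, separately for each $q$. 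So each $z(q)$ is reached by \emph{some} trajectory from $z(-T)$, and that trajectory may change with $q$. Since solutions are not unique, nothing produces a single trajectory through all the $z(q)$. The upper semicontinuity of $x\mapsto\{u:u(0)=x\}$ that you invoke at the end is exactly this missing trajectory-level closedness, and it is not among the hypotheses.
\end{enumerate}
As written, your argument proves quasi-invariance only under the extra hypothesis of Theorem \ref{t5.3}. Granting that hypothesis, the remaining steps are fine: the shift $v_k(t)=u_k(t+s_k)$, property 4 of Definition \ref{evsys}, and $v_k(s)\in R(s_k+s,\theta_{-(s_k+s)}\theta_s\omega)A$ with $s_k+s\to\infty$.

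\textbf{The paper's route.} The paper never uses compactness of trajectory sets. Weak upper semicontinuity enters only through the invariance $R(t,\omega)\Omega_w(A,\omega)=\Omega_w(A,\theta_t\omega)$ from Theorems \ref{t3.2}--\ref{t3.3}. That is a fixed-time statement about sets, which is exactly what the hypothesis can deliver.
\begin{enumerate}
\item Forward in time: any trajectory from $x\in\Omega_w(A,\omega)$ stays in $\Omega_w(A,\theta_t\omega)$, by the inclusion $R(t,\omega)\Omega_w(A,\omega)\subset\Omega_w(A,\theta_t\omega)$.
\item Backward in time: the reverse inclusion, applied at the sample $\theta_{-1}\omega$, gives a point of $\Omega_w(A,\theta_{-1}\omega)$ from which $x$ is reached at time $1$. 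Repeating gives a point of $\Omega_w(A,\theta_{-2}\omega)$ leading to that one, and so on.
\item These pieces are glued into a complete trajectory.
\end{enumerate}
The gluing tacitly uses concatenation of trajectories, which Definition \ref{evsys} does not list. Still, this route never requires passing to a limit of trajectories, which is the step your approach cannot justify here.
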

\begin{proof}
We known that $\Omega_w( A,\omega)$ is invariance, that is $R(t,\omega)\Omega_w( A,\omega)=\Omega_w( A,\theta_t\omega)$. So for any $x\in\Omega_w( A,\omega)$, there exist $u(t)\in\mathcal{E}^{\theta}_{\omega}([0,\infty))$, such that $u(0)\in \Omega_w( A,\omega)$ and $u(t)\in \Omega_w( A,\theta_t\omega)$, $t\geq0$.

Next let us extend the random trajectory $u(t)\in\mathcal{E}^{\theta}_{\omega}([0,\infty))$ to be complete. Choosing any $x\in \Omega_w( A,\omega)$, the invariance of $\Omega_w( A,\omega)$ implies that there hold
$$R(-1,\omega)x\in \Omega_w( A,\theta_{-1}\omega),$$
such that
$$R(t,\omega)x\in R(t+1,\theta_{-1}\omega)R(-1,\omega)x,~t\in[-1,0].$$
Then we have $R(t,\omega)x\in\Omega_w( A,\theta_{t}\omega)$ for all $t\geq-1.$

Applying the above procedure several times, then we obtain $R(t,\omega)x\in\Omega_w( A,\theta_{t}\omega)$ for all $t\geq-n,n\in \mathbb{N }$. Letting $n$ go to infinity, we construct a random complete trajectory $u(t)\in\mathcal{E}^{\theta}_{\omega}((-\infty,\infty))$, such that $R(t,\omega)x\in \Omega_w( A,\theta_{t}\omega)$ for all $t\in \mathbb{R}$. The proof is complete.
\end{proof}


\begin{theorem}\label{t5.2}
Let $\{B(\omega)\}\in  P(X)$. Then $\{B(\omega)\}$ is invariant if and only if $\{B(\omega)\}$ is positively invariant and quasi-invariant.
\end{theorem}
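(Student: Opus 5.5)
The plan is to prove the two implications separately, working only from Definition \ref{inv} and the properties of the random evolutionary system in Definition \ref{evsys}. The key tool is the shift property (condition 2 of Definition \ref{evsys}) together with the description of complete trajectories (condition 4). Complete trajectories for $\omega$ and for $\theta_t\omega$ are related by a time translation, and this is what turns the cocycle-type identity $\widetilde{R}(t,\omega)B(\omega)=B(\theta_t\omega)$ into statements about individual trajectories.

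\emph{Invariant $\Rightarrow$ positively invariant and quasi-invariant.} Positive invariance is immediate, since equality implies inclusion. For quasi-invariance, fix $a\in B(\omega)$.
\begin{itemize}
\item[(i)] Take $t=0$ in the invariance identity: $B(\omega)=\widetilde{R}(0,\omega)B(\omega)$. So $a=u(0)$ for some complete $u\in\mathcal{E}^{\theta}_{\omega}((-\infty,\infty))$ with $u(0)\in B(\omega)$.
\item[(ii)] For $s\ge 0$, invariance gives $u(s)\in\widetilde{R}(s,\omega)B(\omega)=B(\theta_s\omega)$ directly.
\item[(iii)] For $s<0$, use invariance at the sample $\theta_{s}\omega$ with time $-s$: $\widetilde{R}(-s,\theta_{s}\omega)B(\theta_s\omega)=B(\omega)$.
\end{itemize}
Step (iii) only shows that each point of $B(\omega)$ is reached by \emph{some} complete trajectory passing through $B(\theta_s\omega)$ at time $s$. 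It does not show that the same $u$ does so for every $s$.

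To get a single trajectory, I would build it by a backward gluing argument, as in the proof of Theorem \ref{t5.1}.
\begin{itemize}
\item[(a)] Choose complete trajectories $u_n$, shifted via condition 2, with $u_n(-n)\in B(\theta_{-n}\omega)$ and $u_n(0)=a$.
\item[(b)] Concatenate consecutive pieces: on $[-n-1,-n]$ use the trajectory from $B(\theta_{-n-1}\omega)$ that hits the point chosen at time $-n$.
\item[(c)] Use conditions 3 and 4 to conclude that the glued function is a complete trajectory.
\end{itemize}
Positive invariance applied at intermediate samples then gives $u(s)\in B(\theta_s\omega)$ on each interval.

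\emph{Positively invariant and quasi-invariant $\Rightarrow$ invariant.} The inclusion $\widetilde{R}(t,\omega)B(\omega)\subset B(\theta_t\omega)$ is exactly positive invariance. For the reverse inclusion, take $b\in B(\theta_t\omega)$.
\begin{itemize}
\item[(i)] Quasi-invariance at $\theta_t\omega$ gives a complete $v\in\mathcal{E}^{\theta}_{\theta_t\omega}((-\infty,\infty))$ with $v(0)=b$ and $v(s)\in B(\theta_{s}\theta_t\omega)$ for all $s$.
\item[(ii)] Set $u(\cdot)=v(\cdot-t)$. By condition 2 applied to $I=(-\infty,\infty)$, which is invariant under translation, $u\in\mathcal{E}^{\theta}_{\omega}((-\infty,\infty))$.
\item[(iii)] Then $u(0)=v(-t)\in B(\omega)$ and $u(t)=b$, so $b\in\widetilde{R}(t,\omega)B(\omega)$.
\end{itemize}
This direction is routine.

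The main obstacle is the gluing in the first direction. Concatenating trajectories is not part of Definition \ref{evsys}, so I would either read it off from condition 4 (membership being checked on each $[T,\infty)$) or, failing that, take the paper's convention in Theorem \ref{t5.1} that the backward extension is admissible. A second point to check is the sign conventions in condition 2 (stated for $s\ge0$). For negative shifts I would use the fact that $I+s=I$ for $I=\mathbb{R}$ and apply the condition at $\theta_{-s}\omega$ instead.
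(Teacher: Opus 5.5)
Your two directions follow the paper's proof. For ``$\Leftarrow$'' you transport a quasi-invariant complete trajectory through $b\in B(\theta_t\omega)$ back to the sample $\omega$. For ``$\Rightarrow$'' you glue backward pieces inductively. The ``$\Leftarrow$'' direction is correct, and you justify it more carefully than the paper does. This requires reading condition 2 of Definition~\ref{evsys} as $u\in\mathcal{E}^{\theta}_{\omega}(I+s)\iff u(\cdot+s)\in\mathcal{E}^{\theta}_{\theta_s\omega}(I)$, which is the reading consistent with $R(t,\omega)\mathcal{A}_\bullet(\omega)=\mathcal{A}_\bullet(\theta_t\omega)$. With that reading your argument only uses the shift $s=t\ge 0$.

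The ``$\Rightarrow$'' direction has a genuine gap at your step (c), and the paper's inductive construction passes over the same gap without comment. Your first remedy does not work. Condition 4 only says that $u$ is complete iff $u|_{[T,\infty)}\in\mathcal{E}^{\theta}_{\omega}([T,\infty))$ for every $T$. For $T<0$, the restriction of your glued function is itself a concatenation of pieces of different trajectories, and none of conditions 1--4 places it in $\mathcal{E}^{\theta}_{\omega}([T,\infty))$.

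The gap cannot be closed from Definition~\ref{evsys} alone, because ``invariant $\Rightarrow$ quasi-invariant'' is false there. Take $\Omega$ a single point and $X=\mathbb{R}^2$ (push it into $[-1,1]^2$ by $\tanh$ in each coordinate if you want $X$ compact). Let the trajectories be the half-line restrictions of the shifts $v_n(\cdot+r)$, $r\in\mathbb{R}$, $n\in\mathbb{N}$, where $\alpha_n=1+1/n$ and
\[ v_n(t)=(\alpha_n t,0)\ \ (t\ge -n),\qquad v_n(t)=(\alpha_n t,\,-n-t)\ \ (t<-n). \]
Conditions 1--4 hold. The late-time slope determines $n$ and then $r$, so the complete trajectories are exactly the $v_n(\cdot+r)$. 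Let $B=\mathbb{R}\times\{0\}$.
\begin{itemize}
\item Positive invariance is clear.
\item The point $(x,0)$ is reached at time $t$ from $B$ by $v_n(\cdot+x/\alpha_n-t)$ once $n\ge t+|x|$. Hence $\widetilde{R}(t)B=B$ for all $t\ge 0$.
\item Every complete trajectory leaves $B$ in the past, so $B$ is not quasi-invariant.
\end{itemize}

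So you must add a hypothesis. One option is an explicit concatenation axiom, which the paper tacitly uses here and in Theorem~\ref{t5.1} and Lemma~\ref{l5.1}. The other builds on your step (a): assume each $\mathcal{E}^{\theta}_{\omega}([T,\infty))$ is compact in $C([T,\infty);X_w)$ and each $B(\omega)$ is weakly closed. Then a diagonal weak limit of your $u_n$ is complete by compactness and condition 4, equals $a$ at time $0$, and lies in $B(\theta_s\omega)$ for every $s$, with no gluing needed.
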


\begin{proof}
First of all, we begin from left to right. 
From the Definition \ref{inv}, if $\{B(\omega)\}$ is quasi-invariant, then
$$B(\theta_t\omega)\subset\widetilde{R}(t,\omega)B(\omega)\subset R(t,\omega)B(\omega),\forall t\geq0.$$
Therefore, if $\{B(\omega)\}$ is quasi-invariant and  positive invariant, then $\{B(\omega)\}$ is  invariant.

Conversely, if $B(\omega)$ is invariant, then $B(\omega)$ is also positive invariant. Next we show that it is quasi-invariant. Let $u_0\in B(\omega)$, we can find a random complete trajectory $u:\mathbb{R}\rightarrow X$ such that $u(0)=u_0$ and $u(t)\in B(\theta_t\omega)$ for all $t\in \mathbb{R}$. Indeed, for $t\geq0$, we simply set $u(t)=R(t,\omega)u_0$, which is in $B(\theta_t\omega)$, because $B(\omega)$ is invariant.

In order to construct $u(t)$ for $t<0$, we proceed inductively. Since $R(1,\omega)B(\omega)=B(\theta_1\omega)$, then there exists $u(-1)\in B(\theta_{-1}\omega)$ such that $R(0,\theta_{-1}\omega)u(\theta_{-1}\omega)=u(0)$. Let 
$$ u(t) = R(t,\theta_{-1}\omega)u(-1)  \mbox{~for~} -1\leq t<0.$$
Now, there exists  $u(-2)\in B(\theta_{-2}\omega)$ such that $R(-1,\theta_{-2}\omega)u(-2)=u(-1)$ and we set 
$$u(t)=R(t,\theta_{-1}\omega)u(-2) \mbox{~ for~} -2\leq t<-1.$$ This procedure yields the required complete trajectory $u(t),u(t)\in B(\theta_t\omega),t\in \mathbb{R}$. The proof is complete.
\end{proof}


%

$\mathcal{E}^{\theta}_{\omega}((-\infty,\infty))$ consists of all random complete trajectories. We will use random complete trajectory to characterize the structure of global random attractor $\{\mathcal{A}_w(\omega)\}$ and $\{\mathcal{A}_s(\omega)\}$.
Define $$\Xi_{\omega}(t)=\{u(t)|u\in \mathcal{E}^{\theta}_{\omega}((-\infty,\infty))\}$$ collection of value of random complete trajectories.

\begin{lemma}\label{l5.1}
Suppose that there exists a family of random absorbing set and the random evolutionary semigroup $\{R(t,\omega)\}$ is weak upper semicontinuous in $X$ for every $t\in \mathbb{R}^+, \omega\in\Omega$. Then the weak global random attractor $\{\mathcal{A}_w(\omega)\}$ satisfies $\mathcal{A}_w(\omega)=\Xi_{\omega}(0)$ for every $\omega\in\Omega$.
\end{lemma}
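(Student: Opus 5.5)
We prove the two inclusions $\mathcal{A}_w(\omega)\subset\Xi_{\omega}(0)$ and $\Xi_{\omega}(0)\subset\mathcal{A}_w(\omega)$ separately. Throughout we use the identification $\mathcal{A}_w(\omega)=\Omega_w(A,\omega)$ from Theorems \ref{t3.1}, \ref{t3.2} and \ref{t3.3}, where $A$ may be taken as the random absorbing set (or as any set for which $\Omega_w(A,\omega)$ attracts).

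\emph{First inclusion.} By Theorem \ref{t5.1}, $\{\Omega_w(A,\omega)\}$ is quasi-invariant. Hence every $a\in\mathcal{A}_w(\omega)$ is the value $u(0)$ of some complete trajectory $u\in\mathcal{E}^{\theta}_{\omega}((-\infty,\infty))$. By definition this means $a\in\Xi_{\omega}(0)$. No further work is needed for this direction.

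\emph{Second inclusion.} Let $u\in\mathcal{E}^{\theta}_{\omega}((-\infty,\infty))$ and set $x=u(0)$. For each $n\in\mathbb{N}$, property 4 of Definition \ref{evsys} shows that $u|_{[-n,\infty)}$ is a trajectory. The shift property 2 then turns it into a trajectory $v_n(\cdot)=u(\cdot-n)\in\mathcal{E}^{\theta}_{\theta_{-n}\omega}([0,\infty))$ with $v_n(n)=x$. Therefore
$$x\in R(n,\theta_{-n}\omega)\{u(-n)\}\quad\mbox{for every } n.$$
Next we need a single set $D$ containing all the points $u(-n)$, for instance $D=\{u(-n):n\in\mathbb{N}\}$. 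We then invoke the weak attraction of $\mathcal{A}_w(\omega)$ to $D$, given by Theorem \ref{t3.3} or Theorem \ref{t3.2}. This yields
$$d_w(x,\mathcal{A}_w(\omega))\le d_w\bigl(R(n,\theta_{-n}\omega)D,\mathcal{A}_w(\omega)\bigr)\to0 .$$
Since $\mathcal{A}_w(\omega)$ is weakly closed (indeed weakly compact), we conclude $x\in\mathcal{A}_w(\omega)$.

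\emph{Main obstacle.} The delicate point is the second inclusion, specifically the role of $D$. The attraction property in the paper is formulated for fixed sets $D\in P(X)$, and we apply it to a set depending on the trajectory $u$. We must make sure that the weak global random attractor attracts such $D$, uniformly over all of its points, rather than only the particular $A$ appearing in $\Omega_w(A,\omega)$.

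If this is problematic, we will instead use the absorbing set $K$. Because $X$ is $d_w$-compact, we may take $D=X$ itself, so that $R(n,\theta_{-n}\omega)X$ is eventually absorbed. This gives $x\in\bigcap_{s}\overline{\bigcup_{t\ge s}R(t,\theta_{-t}\omega)X}^w=\Omega_w(X,\omega)=\mathcal{A}_w(\omega)$ directly from the sequential characterization of the $\Omega_w$-limit set, with $s_k=k$ and $x_k=x$. This route avoids the attraction statement altogether, and it is the one we will pursue if the first is not available.
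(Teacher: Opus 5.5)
Your proposal is correct and follows essentially the paper's route. The inclusion $\mathcal{A}_w(\omega)\subset\Xi_{\omega}(0)$ comes from invariance plus the backward inductive extension to a complete trajectory; you cite this as the quasi-invariance of Theorem \ref{t5.1}, while the paper invokes the procedure of Theorem \ref{t5.2} directly. The reverse inclusion comes from the invariance of complete trajectories combined with pullback attraction, and your fallback with $D=X$ (giving $u(0)\in R(n,\theta_{-n}\omega)X$ for all $n$, hence $u(0)\in\Omega_w(X,\omega)\subset\mathcal{A}_w(\omega)$ by Lemma \ref{l3.2}) simply makes explicit what the paper compresses into ``$\Xi_{\omega}(t)$ is invariant by definition.''
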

\begin{proof}
$\Xi_{\omega}(t)$ is invariant by definition. So we have $\Xi_{\omega}(0)\subset \mathcal{A}_w(\omega)$. If $u(0)\in \mathcal{A}_w(\omega)$, then $R(t,\omega)u(0)\subset \mathcal{A}_w(\theta_t\omega)$ for all $t\geq 0$. Since $\mathcal{A}_w(\omega)$ is invariant, we can use the inductive procedure like Theorem \ref{t5.2} to construct a random complete trajectory $u(t)$, $t\in \mathbb{R}$ satisfy $u(0)\in \mathcal{A}_w(\omega)$ . The proof is complete.
\end{proof}

Similar result also holds to strong global random attractor.

\begin{corollary}
Suppose that there exists a family of random absorbing set. If the random evolutionary semigroup $\{R(t,\omega)\}$ is weak upper semicontinuous in $X$ for every $t\in \mathbb{R}^+, \omega\in\Omega$ and $s-$asymptotically compact, then the strong global random attractor $\{\mathcal{A}_s(w)\}$ satisfies $\mathcal{A}_s(w)=\Xi_{\omega}(0)$.
\end{corollary}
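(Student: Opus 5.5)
The plan is to reduce the corollary to Lemma \ref{l5.1} by way of Theorem \ref{t4.1}. No separate construction of complete trajectories will be needed for the strong attractor.

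First, every hypothesis of Theorem \ref{t4.1} is in force: a family of random absorbing sets exists, $\{R(t,\omega)\}$ is weakly upper semicontinuous, and it is $s-$asymptotically compact. Theorem \ref{t4.1} therefore gives both that $\{\mathcal{A}_s(\omega)\}$ exists and that $\mathcal{A}_s(\omega)=\mathcal{A}_w(\omega)$ for every $\omega\in\Omega$. Behind this equality lies Lemma \ref{l4.1}, which gives $\Omega_s(A,\omega)=\Omega_w(A,\omega)$, combined with the identification of each attractor with its $\Omega_\bullet-$limit set in Theorem \ref{t3.1}.

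Second, the absorbing set and weak upper semicontinuity are exactly the hypotheses of Lemma \ref{l5.1}. That lemma yields $\mathcal{A}_w(\omega)=\Xi_{\omega}(0)$ for every $\omega$. Chaining the two equalities then gives $\mathcal{A}_s(\omega)=\mathcal{A}_w(\omega)=\Xi_{\omega}(0)$, which is the claim.

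The only point needing care is that the equality $\mathcal{A}_s=\mathcal{A}_w$ holds sample-wise, for every $\omega$, and not merely almost surely. Lemma \ref{l4.1} and Theorem \ref{t4.1} are stated pointwise in $\omega$, so the chain of equalities is legitimate for each fixed $\omega$. As an alternative route, one could rerun the argument of Lemma \ref{l5.1} directly for $\mathcal{A}_s$:
\begin{enumerate}
\item[(i)] invariance of $\Xi_\omega(\cdot)$ together with minimality gives $\Xi_\omega(0)\subset\mathcal{A}_s(\omega)$;
\item[(ii)] invariance of $\mathcal{A}_s$ and the backward inductive construction of Theorem \ref{t5.2} give the reverse inclusion.
\end{enumerate}
Passing through $\mathcal{A}_w$ avoids repeating this.
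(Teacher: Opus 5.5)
Your reduction is what the paper intends. The corollary is stated without proof as the strong analogue of Lemma \ref{l5.1}, and it follows at once by chaining $\mathcal{A}_s(\omega)=\mathcal{A}_w(\omega)$ from Theorem \ref{t4.1} with $\mathcal{A}_w(\omega)=\Xi_\omega(0)$ from Lemma \ref{l5.1}, whose hypotheses are contained in yours. (One aside: in your optional alternative, step (i) should rest on $\mathcal{A}_s$ being closed and attracting the invariant family $\Xi_\omega(\cdot)$, not on minimality, which yields inclusions in the opposite direction.)
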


The following results tell us the tracking property of weak and strong global random attractor. For arbitrary random trajectory of phase space, there is random trajectory start from weak and strong global random attractor, to appropriate former, hence all the information of whole random dynamical system are reflected by global random attractor.

\begin{theorem}(Weak Tracking Property)\label{t5.3}
Let $\mathcal{E}^{\theta}_{\omega}([T,\infty)),T\in \mathbb{R}^+$ be a random evolutionary system. Also suppose that $\mathcal{E}^{\theta}_{\omega}([T,\infty))$ is a compact set in $C([T,\infty);X_w)$. Let $A\subset X$. Then for any $\varepsilon>0$, there exists $t_0\geq0$, such that for any $t^*>t_0$, every random trajectory $u\in\mathcal{E}^{\theta}_{\omega}([t^*,\infty))$ with $u(t^*)\in A$ satisfies
$$d_{C([t^*,\infty);X_w)}(u,v)<\varepsilon,$$
for some complete trajectory $v\in \mathcal{E}^{\theta}_{\omega}((-\infty,\infty))$ with $v(t)\in \Omega_w(A,\omega)$ for all $t\in \mathbb{R}, \omega\in\Omega$.
\end{theorem}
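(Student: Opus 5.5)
We argue by contradiction, following the deterministic weak tracking argument of \cite{1,2}. Suppose the statement fails. Then there exist $\varepsilon>0$, a sequence $t_n\to\infty$, and trajectories $u_n\in\mathcal{E}^{\theta}_{\omega}([t_n,\infty))$ with $u_n(t_n)\in A$, such that
$$d_{C([t_n,\infty);X_w)}(u_n,v)\geq\varepsilon$$
for every complete trajectory $v$ with $v(t)\in\Omega_w(A,\omega)$ for all $t$.

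\textbf{Step 1: shift to a common time origin.} Using property 2 of Definition \ref{evsys}, we set $w_n(\cdot)=u_n(\cdot+t_n)$. Then $w_n$ is a trajectory on $[0,\infty)$ for the shifted sample $\theta_{t_n}\omega$, and $w_n(0)\in A$. The failure of tracking becomes $d_{C([0,\infty);X_w)}(w_n,v(\cdot+t_n))\geq\varepsilon$.

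\textbf{Step 2: extract a limit on ever longer intervals.} For each $T>0$, property 3 lets us regard $u_n|_{[t_n-T,\infty)}$, for $n$ large, as an element of $\mathcal{E}^{\theta}_{\omega}([-T,\infty))$ after extending or restricting, with the appropriate sample shifts. The assumed compactness of $\mathcal{E}^{\theta}_{\omega}([T,\infty))$ in $C([T,\infty);X_w)$ then gives a convergent subsequence. A diagonal argument over $T=1,2,\dots$ produces a subsequence and a limit $v$ such that $u_{n}\to v$ in $C([-T,\infty);X_w)$ for every $T$, after recentering the times.

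By property 4 of Definition \ref{evsys}, together with the closedness supplied by compactness, the limit $v$ belongs to $\mathcal{E}^{\theta}_{\omega}((-\infty,\infty))$.

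\textbf{Step 3: show $v$ lies in the $\Omega$-limit set.} For each fixed $t$, the value $v(t)$ is a weak limit of $u_{n}(t_n+t)$. This point lies in $R(t_n+t,\cdot)A$, with the elapsed time $t_n+t\to\infty$, so by the sequential characterization of $\Omega_w$ we get $v(t)\in\Omega_w(A,\theta_t\omega)$. The sample-dependence here has to be tracked via the cocycle relation $\theta_{-s}$ built into the definition of $\Omega_w$.

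\textbf{Step 4: conclude.} The locally uniform weak convergence on $[t_n,\infty)$ contradicts the lower bound $\varepsilon$ for $n$ large. The tail terms in the metric $d_{C}$ are harmless because they carry the weights $2^{-T}$ and the weak metric is bounded, $X$ being $d_w$-compact. We therefore truncate at $T$ with $2^{-T}<\varepsilon/2$ and use uniform convergence on a compact window.

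\textbf{Main obstacle.} The hardest step is the combination of Steps 2 and 3, namely handling the sample shifts consistently. The trajectories $u_n$ naturally live in fibers $\theta_{t_n}\omega$ that vary with $n$, while the compactness hypothesis is stated fiberwise. I would first try to keep every trajectory in the single fiber $\omega$, interpreting $\mathcal{E}^{\theta}_{\omega}([t^*,\infty))$ literally as in the statement, and apply compactness in the sets $\mathcal{E}^{\theta}_{\omega}([-T,\infty))$ via restriction. Only if that fails would I use property 2 and measure-preservation to transport the problem between fibers.
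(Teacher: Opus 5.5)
There is a genuine gap. It sits in Steps 2 and 3, where you expected trouble, and it cannot be patched because the statement as written is false.

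Your $u_n$ lie in $\mathcal{E}^{\theta}_{\omega}([t_n,\infty))$ and leave $A$ at time $t_n$, so $u_n|_{[t_n-T,\infty)}$ does not exist. Property 3 of Definition \ref{evsys} only restricts to \emph{smaller} intervals, and nothing in the definition extends a trajectory backward. After recentering you have $w_n$ only on $[0,\infty)$, so the diagonal argument cannot produce a complete trajectory. Step 3 fails for the same reason: $u_n(t_n+t)$ is reached from $A$ after elapsed time $t$, not $t_n+t$. For bounded $t$ nothing puts $v(t)$ in $\Omega_w$; indeed $v(0)$ is a weak limit of points of $A$.

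Every term of $d_{C([t^*,\infty);X_w)}(u,v)$ involves $d_w(u(t^*),v(t^*))$ with $u(t^*)\in A$. So the conclusion would force the point $u(t^*)\in A$ to lie weakly within $O(\varepsilon)$ of $\Omega_w(A,\omega)$, which fails in general. A concrete counterexample:
\begin{itemize}
\item Let $\Omega$ be a single point, $X=[0,1]$ with $d_s=d_w=|\cdot|$, and $\mathcal{E}([T,\infty))=\{x_0e^{-(t-T)}:x_0\in[0,1]\}$, which is compact in $C([T,\infty);X)$. Take $A=\{1\}$.
\item Then $\Omega_w(A)=\{0\}$ and the only complete trajectory is $v\equiv 0$.
\item The trajectory $u(t)=e^{-(t-t^*)}$ gives $d_{C([t^*,\infty);X_w)}(u,0)\geq\frac12$ for every $t^*$.
\end{itemize}
The paper's own proof contains the same unjustified step. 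It sets $v_n(t)=u_n(t+\tau_n)$ and asserts $v_n\in\mathcal{E}^{\theta}_{\omega}([-\tau_n,\infty))$. That only makes sense if $u_n$ starts at time $0$, as in the deterministic theorem being imitated, where trajectories are in $\mathcal{E}([0,\infty))$ and are compared on $[t^*,\infty)$.

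Your first plan, staying in the single fiber $\omega$ and using compactness of $\mathcal{E}^{\theta}_{\omega}([-T,\infty))$ via restriction, does work, but for a pullback formulation. Take $u_n\in\mathcal{E}^{\theta}_{\omega}([-t_n,\infty))$ with $u_n(-t_n)\in A$ and $t_n\to\infty$, and measure the distance on $[0,\infty)$.
\begin{itemize}
\item By property 3, $u_n|_{[-T,\infty)}\in\mathcal{E}^{\theta}_{\omega}([-T,\infty))$ once $t_n\geq T$.
\item The diagonal argument plus property 4 gives a complete $v\in\mathcal{E}^{\theta}_{\omega}((-\infty,\infty))$ with $u_n\to v$ locally uniformly in $X_w$.
\item Property 2, read as $u(\cdot+s)\in\mathcal{E}^{\theta}_{\theta_s\omega}(I)$, shows $u_n(t)\in R(t_n+t,\theta_{-(t_n+t)}\theta_t\omega)A$.
\item Hence $v(t)\in\Omega_w(A,\theta_t\omega)$. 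This is the fiber you wrote in Step 3, not the fixed $\omega$ of the statement.
\end{itemize}
The forward formulation behaves differently. Starting in $\mathcal{E}^{\theta}_{\omega}([0,\infty))$ and comparing on $[t^*,\infty)$ puts the recentered $u_n(\cdot+t_n)$ in the varying fibers $\theta_{t_n}\omega$. There, fiberwise compactness alone yields no convergent subsequence, and the limit points are forward rather than pullback limits. So the obstacle you flagged is real, and the stated hypotheses do not resolve it.
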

\begin{proof}
Suppose that the claim is not true. Then there exist $\varepsilon>0$, $u_n\in \mathcal{E}^{\theta}_{\omega}([\tau_n,\infty))$ with $u_n(\tau_n)\in A$ and $\tau_n\geq 0$, $\tau_n\rightarrow\infty$ as $n\rightarrow\infty$ such that
$$d_{C([\tau_n,\infty);X_w)}(u_n,v)\geq\varepsilon,$$
for all $n$ and any $v\in \mathcal{E}^{\theta}_{\omega}((-\infty,\infty))$ with $v(t)\in \Omega_w(A,\omega)$, $t\in \mathbb{R}$.

On the other hand, consider a sequence $v_n\in \mathcal{E}^{\theta}_{\omega}([-\tau_n,\infty))$, $v_n(t)=u_n(t+\tau_n)$. Thanks to the fact that $\mathcal{E}^{\theta}_{\omega}([-\tau_n,\infty))$ is compact in $C([-\tau_n,\infty);X_w)$ for all n, as in functional diaganalization process method, by choose subsequence, we can infer that there exists $v\in \mathcal{E}^{\theta}_{\omega}((-\infty,\infty))$, such that $v_n\rightarrow v$ in $C([T,\infty);X_w)$ as $n\rightarrow\infty$ for all $T\in \mathbb{R}$, in particular $v(t)\in \Omega_w(A,\omega)$ for all $t\in \mathbb{R}$.
Hence, for large enough $n$, we have
$$d_{C([0,\infty);X_w)}(v_n,v)<\varepsilon,$$
thus
$$d_{C([\tau_n,\infty);X_w)}(u_n,v(\cdot-\tau_n))<\varepsilon,$$
which is a contradiction. The proof is complete.
\end{proof}

\begin{theorem}\label{sptp}(Strong Tracking Property)
 Suppose that random evolutionary system $\mathcal{E}^{\theta}_{\omega}([T,\infty))$, $T\in \mathbb{R}^+$ satisfies conditions in Theorem \ref{t5.3} and it is $s-$asymptotically compact. Let $A\subset X$. Then for any $\varepsilon>0$ and $T>0$, there exists $t_0\geq0$, such that for any $t^*>t_0$, every trajectory $u\in \mathcal{E}^{\theta}_{\omega}([t^*,\infty))$ with $u(t)\in A$ satisfies
$$d_s(u(t),v(t))<\varepsilon,\forall t\in[t^*,t^*+T],$$
for some complete trajectory $\mathcal{E}^{\theta}_{\omega}((-\infty,\infty))$ with $v(t)\in \Omega_s(A,\omega)$ for all $t\in \mathbb{R}$.
\end{theorem}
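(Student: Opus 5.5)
We argue by contradiction, in the same way as for the Weak Tracking Property (Theorem \ref{t5.3}), and add the $s-$asymptotic compactness to upgrade weak convergence to strong convergence on compact time windows. Suppose the claim fails for some $\varepsilon>0$ and $T>0$. Then there are times $\tau_n\rightarrow\infty$ and trajectories $u_n\in\mathcal{E}^{\theta}_{\omega}([\tau_n,\infty))$ with $u_n(\tau_n)\in A$ with the following property: for every complete trajectory $v$ with values in $\Omega_s(A,\omega)$,
$$\sup_{t\in[\tau_n,\tau_n+T]}d_s(u_n(t),v(t))\geq\varepsilon .$$
We shift these trajectories, setting $v_n(t)=u_n(t+\tau_n)$, so that $v_n\in\mathcal{E}^{\theta}_{\omega}([-\tau_n,\infty))$ by property 2 of Definition \ref{evsys}, up to the usual sample shift.

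\textbf{Step 1 (weak limit).} We use compactness of $\mathcal{E}^{\theta}_{\omega}([-\tau_n,\infty))$ in $C([-\tau_n,\infty);X_w)$ together with a diagonal argument, exactly as in the proof of Theorem \ref{t5.3}. This gives a subsequence and a complete trajectory $v\in\mathcal{E}^{\theta}_{\omega}((-\infty,\infty))$ with $v_n\rightarrow v$ in $C([-M,\infty);X_w)$ for every $M$. Each value $v(t)$ is a weak limit of points lying in $R(s,\theta_{-s}\cdot)A$ with $s\rightarrow\infty$. Hence $v(t)\in\Omega_w(A,\cdot)$, and Lemma \ref{l4.1} shows that this set equals $\Omega_s(A,\cdot)$.

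\textbf{Step 2 (strong convergence on $[0,T]$).} This is the main obstacle. We must show
$$\sup_{t\in[0,T]}d_s(v_n(t),v(t))\rightarrow0 .$$
Suppose not. Then there are $\delta>0$, a further subsequence, and times $t_n\in[0,T]$ with $d_s(v_n(t_n),v(t_n))\geq\delta$; passing to a subsequence we may take $t_n\rightarrow t^\ast\in[0,T]$.

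First, $v_n(t_n)$ has the form $R(s_n,\theta_{-s_n}\cdot)x_n$ with $x_n\in A$ and $s_n=\tau_n+t_n\rightarrow\infty$. By $s-$asymptotic compactness, a subsequence of $v_n(t_n)$ converges strongly to some $y$.

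Second, weak locally uniform convergence together with weak continuity of $v$ gives $v_n(t_n)\rightarrow v(t^\ast)$ weakly, so $y=v(t^\ast)$.

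Third, we need $v(t_n)\rightarrow v(t^\ast)$ strongly. This does not follow from weak continuity alone. I would try to obtain it by applying the same compactness to $v_n(t_n')$ for other sequences $t_n'\rightarrow t^\ast$, which gives sequential strong continuity of $v$ on $[0,T]$. If that fails, I would instead compare $v_n(t_n)$ and $v_n(t^\ast)$ directly via asymptotic compactness. Once this is in place we get $d_s(v_n(t_n),v(t_n))\rightarrow0$, a contradiction.

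\textbf{Step 3 (conclusion).} Shifting back, the complete trajectory $\tilde v(t)=v(t-\tau_n)$ takes values in $\Omega_s(A,\theta\cdot)$ and satisfies
$$\sup_{[\tau_n,\tau_n+T]}d_s(u_n,\tilde v)<\varepsilon$$
for large $n$. This contradicts the choice of $u_n$ and proves the theorem.
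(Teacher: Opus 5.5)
Your scheme matches the paper's: argue by contradiction, shift the bad trajectories, and extract a limiting complete trajectory as in Theorem \ref{t5.3}. The paper's proof, however, only asserts (``according to assumption'') that there are complete trajectories $v_n$ agreeing with $u_n$ and converging to some $v$ uniformly in $d_s$ on $[\tau_n,\tau_n+T]$. Your Step 2 is exactly the content the paper skips, and as written it is not finished. The third point, $v(t_n)\rightarrow v(t^\ast)$ strongly, is left as a plan (``I would try\dots'', ``if that fails\dots''). Weak continuity of $v$ does not give it by itself, and without it you cannot conclude $d_s(v_n(t_n),v(t_n))\rightarrow0$.

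The missing observation is already in your Step 1. Every value $v(t)$ lies in $\Omega_w(A,\cdot)=\Omega_s(A,\cdot)$, which is strongly compact by Lemma \ref{l4.1}. On a strongly compact set $K$ the identity $(K,d_s)\rightarrow(K,d_w)$ is a continuous bijection from a compact space onto a metric space, hence a homeomorphism; this is the same fact used in Lemma \ref{l3.1}(3). So the weak convergence $v(t_n)\rightarrow v(t^\ast)$ is automatically strong, $v$ is strongly continuous, and together with your first two points you get the contradiction.

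Your first suggestion can also be made rigorous by a diagonal choice. For each fixed $s$, $v_m(s)\rightarrow v(s)$ strongly, by precompactness plus weak convergence. Pick $m_n\geq n$ with $d_s(v_{m_n}(t_n),v(t_n))<1/n$. Then $\{v_{m_n}(t_n)\}$ is strongly precompact and converges weakly to $v(t^\ast)$, hence strongly, and so $v(t_n)\rightarrow v(t^\ast)$ strongly.

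One caveat should be stated explicitly rather than hidden in the ``$\cdot$''. Your first point and both fixes use compactness for a single sample. That is fine if, as the statement literally says, all $v(t)$ lie in the one set $\Omega_s(A,\omega)$, or in the deterministic setting. In a genuinely fibred reading, however, $v_n(t_n)$ and $v(t_n)$ sit over the moving samples $\theta_{t_n}\omega$. You would then need $s$-asymptotic compactness uniform over $\{\theta_s\omega:s\in[0,T]\}$, whereas the paper's definition is for fixed $\omega$. The paper does not address this either.
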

\begin{proof}
Suppose that the claim does not hold. Then there exist $\varepsilon>0$, $T>0$ and sequences $u_n\in \mathcal{E}^{\theta}_{\omega}([\tau_n,\infty))$ with $u_n(\tau_n)\in A$ and $\tau_n\geq 0,\tau_n\rightarrow\infty$ as $n\rightarrow\infty$ such that
$$\sup_{t\in[\tau_n,\tau_n+T]}d_s(u_n(t),v(t))\geq\varepsilon,\forall n,$$
for all $v\in \mathcal{E}^{\theta}_{\omega}((-\infty,\infty))$ with $v(t)\in \Omega_s(A,\omega),\forall t\in \mathbb{R}$.

On the other hand, according to assumption, there exists a sequence $v_n\in \mathcal{E}^{\theta}_{\omega}((-\infty,\infty))$ with $v_n(t)=u_n(t)$ on $[\tau_n,\infty)$ such that
$$\lim_{n\rightarrow\infty}\sup_{t\in[\tau_n,\tau_n+T]}d_s(v_n(t),v(t))=0,$$ which is a contradiction. The proof is complete.
\end{proof}

\section{3D   Navier-Stokes equation with nonlinear colored noise}
Given a metric dynamical system $(\Omega,  \mathcal{F},    \mathbb{P}, \{\theta_t\}_{t\in \mathbb{R}})$, where
 $$\Omega=\{\omega\in C(\mathbb{R},\mathbb{R}):\, \omega(0)=0\}$$
equipped with the compact-open topology, $\mathcal{F}$ is the Borel sigma-algebra $\mathcal{B}(\Omega)$, $\mathbb{P}$ is the Wiener measure and $\{\theta_t\}_{t\in \mathbb{R}}$ is the measure-preserving transformation group on $\Omega$ by
$$
\theta_t\omega(\cdot)=\omega(\cdot+t)-\omega(t), \quad \omega\in\Omega,\,t\in\mathbb{R}.
$$
Consider the following 3D random Navier-Stokes equations driven by colored noise:
\begin{eqnarray}\label{NS-clor}
\left\{\begin{array}{llc}
\frac{\partial u}{\partial t}=\nu\triangle u -u\cdot\nabla u-\nabla p+f+G(x,u)\zeta_{\delta}(\theta_t\omega),\, ~x\in D, ~t>\tau, \\
\mbox{div}\, u=0,  \\
\end{array}\right.
\end{eqnarray}
supplemented by the non-slip  boundary and initial conditions
\begin{eqnarray}\label{NS-1clorb}
\left\{\begin{array}{llc}
u|_{\partial D}=0,\\
u|_{t=\tau}=u_\tau,  \\
\end{array}\right.
\end{eqnarray}
where $\tau\in \mathbb{R}$ and the colored noise is a random variable defined by
$$
\zeta_{\delta}(\theta_t \omega)=-\frac{1}{\delta^2}\int^0_{-\infty} e^{\frac{s}{\delta}}\theta_t \omega(s) ds=\frac{1}{\delta}\int^t_{-\infty} e^{\frac{1}{\delta}(s-t)}  dW(s,\omega)
$$
with $W(t,\omega)=\omega(t)$ is a two-sided real-values Wiener process defined on $(\Omega,  \mathcal{F},    \mathbb{P})$ for $t\in \mathbb{R}$ and $\omega\in \Omega$. For simplicity, the external force term is considered to be autonomous with respect to time.

In this section, we use our abstract results that established in above sections to analyze the dynamical behavior of stochastic system \eqref{NS-clor}-\eqref{NS-1clorb}.

\subsection{Preliminary}
\quad Firstly, let $D\subset\mathbb{R}^3$ be a bounded domain with smooth boundary $\partial D$.  Denote $$\mathcal{V}=\{u\in (C_{0}^{\infty}(D))^{3}|~\mbox{div} u=0, \int_{D}udx=0\}, $$ $ \boldsymbol L^{2}(D) =(L^{2}(D))^{3}$ and $ \boldsymbol{H}_{0}^{1}(D)=(H_{0}^{1}(D))^{3} $. $ H $ is  the closure of $ \mathcal{V}$ in $\boldsymbol{L}^{2}(D)  $  with the inner product $ (u,v) $, where for $ u $, $ v \in H $
\begin{equation*}
	\begin{aligned}
		(u,v)=\sum\limits_{\scriptstyle i=1}^3\int_{D}u_i(x)v_i(x)dx,
	\end{aligned}
\end{equation*}
which leads to the norm as $\|u\|^2:=(u,u)$.
$V$  is the closure of $ \mathcal{V}$ in $ \boldsymbol{H}_{0}^{1} (D)$ with the inner product $ ((u,v))$, where for $u,~v\in V $
 \begin{equation*}
	\begin{aligned}
		((u,v))=\sum\limits_{\scriptstyle i,j=1}^3\int_{D} \partial_i u_j \partial_i v_j dx,
	\end{aligned}
\end{equation*}
where $\partial_i=\frac{\partial}{\partial x_i}$, which leads to the norm as  $\|u\|_V^2:=((u,u))$.
Let $ H^{*} $ and $V^{*} $ be the dual spaces of  $H$ and $V$  respectively.
\par
Secondly, we define a linear operator $ A:~V\to V^{*} $ as
$$ \langle Au,~v\rangle=((u,v))=(\nabla u,~\nabla v),~ \forall ~u, v\in V. $$
Let $\mathfrak{D}(A) = \{u\in V,~Au\in H\}.$  We define $ V^{\alpha}=\mathfrak{D}(A^{\frac{1}{2}})$ with the inner product
$$\langle u,v \rangle_{V^{\alpha}}=( u,v )+\alpha^{2}( A^{\frac{1}{2}}u,A^{\frac{1}{2}}v).$$
Furthermore,
$$A^{-1} :~ H\to \mathfrak{D}(A).$$
Because $ A^{-1}$ is self-adjoint, compact and positive definite, we have a set of standard orthogonal basis $\{w_{j}\}_{j=1}^{\infty}$ of $H$, that is
$$ A^{-1}w_{j} =\mu _{j}w_{j},$$
and $ w_{j}=\frac{1}{\mu _{j}}A^{-1}w_{j}\in \mathfrak{D}(A) $. Let $\lambda _{j}=\frac{1}{\mu_{j}}$, then
$ Aw_{j}=\lambda_{j}w_{j} $
and $ 0<\lambda_{1} \le \lambda_{2} \le \lambda_{3} \le \dots$, which means that $\{w_{j}\},~\{\lambda_{j}\} $ are the eigenvectors and eigenvalues of $ A $ respectively.
Furthermore, the embedded relations are true: $ \mathfrak{D}(A)\hookrightarrow V\hookrightarrow H\equiv H ^*\hookrightarrow V^* $. Notice that $\{w_{j}\}_{j=1}^{\infty}$ is orthogonal in $V$, since for all $ j \neq k$,  $$ ((w_{j},w_{k}))=\langle Aw_{j},w_{k}\rangle=(\lambda_{j}w_{j}, w_{k})=\lambda_{j}(w_{j},w_{k})=0,$$
$$ \langle Aw_{j},w_{j}\rangle=\lambda_{j}\|w_{j}\|^{2}=\lambda_{j}.$$

In fact, the norm of $H$   can be defined by  eigenvectors.
If $u\in H$, then $u=\sum\limits_{j=1}^{\infty}u_{j}w_{j}$, where $ (u_{1}, u_{2}, \ldots,u_{j},\ldots)$ is called the coordinate of $ u $
in $ H $ and the norm is
$$ \|u\|^{2} = \sum_{j=1}^{\infty}|u_{j}|^{2}\|w_{j}\|^{2}= \sum_{j=1}^{\infty}|u_{j}|^{2}.$$
In addition, we define the following metric in $ H $,
\begin{equation} \label{dwh}
	d_{w}(u,v)=\sum_{j=1}^{\infty}\dfrac{1}{2^{j}} \frac{|u_{j}-v_{j}|}{1+|u_{j}-v_{j}|},~~~~~~u,v\in H ,
 \end{equation}
which is not a norm.

Next, for all $u, v, w\in V$, the  bilinear and trilinear operators are defined by
\begin{eqnarray}\nonumber
a(u,v)=(\nabla u, \nabla v),\quad\quad\quad\quad \quad\quad\\
b(u,v,w)=\langle (u \cdot \nabla )v,w\rangle=\sum\limits_{\scriptstyle i,j=1}^2\int_\Omega u_i (\partial_i v_j) w_j dx,
\end{eqnarray}
and a bilinear continuous operator of $B: V\times V \rightarrow V^{*}$ is introduced by
\begin{eqnarray}\nonumber
\langle B(u,v), w\rangle =b(u,v,w).
\end{eqnarray}

\subsection{Formulation and existence of weak solution}
Given a metric dynamical system $(\Omega,  \mathcal{F},    \mathbb{P}, \{\theta_t\}_{t\in \mathbb{R}})$, where
 $$\Omega=\{\omega\in C(\mathbb{R},\mathbb{R}):\, \omega(0)=0\}$$
equipped with the compact-open topology, $\mathcal{F}$ is the Borel sigma-algebra $\mathcal{B}(\Omega)$, $\mathbb{P}$ is the Wiener measure and $\{\theta_t\}_{t\in \mathbb{R}}$ is the measure-preserving transformation group on $\Omega$ by
$$
\theta_t\omega(\cdot)=\omega(\cdot+t)-\omega(t), \quad \omega\in\Omega,\,t\in\mathbb{R}.
$$
Consider the following 3D random Navier-Stokes equations driven by colored noise:
\begin{eqnarray}\label{NS-clor}
\left\{\begin{array}{llc}
\frac{\partial u}{\partial t}=\nu\triangle u -u\cdot\nabla u-\nabla p+f+G(x,u)\zeta_{\delta}(\theta_t\omega),\, ~x\in D, ~t>\tau, \\
\mbox{div}\, u=0,  \\
\end{array}\right.
\end{eqnarray}
supplemented by the non-slip  boundary and initial conditions
\begin{eqnarray}\label{NS-1clorb}
\left\{\begin{array}{llc}
u|_{\partial D}=0,\\
u|_{t=\tau}=u_\tau,  \\
\end{array}\right.
\end{eqnarray}
where $\tau\in \mathbb{R}$ and the colored noise is a random variable defined by
$$
\zeta_{\delta}(\theta_t \omega)=-\frac{1}{\delta^2}\int^0_{-\infty} e^{\frac{s}{\delta}}\theta_t \omega(s) ds=\frac{1}{\delta}\int^t_{-\infty} e^{\frac{1}{\delta}(s-t)}  dW(s,\omega)
$$
with $W(t,\omega)=\omega(t)$ is a two-sided real-values Wiener process defined on $(\Omega,  \mathcal{F},    \mathbb{P})$ for $t\in \mathbb{R}$ and $\omega\in \Omega$. For simplicity, the external force term is considered to be autonomous with respect to time.

The concept of weak solution to system \eqref{NS-clor}-\eqref{NS-1clorb} is defined firstly.
\begin{definition}
Let $f\in V^\ast$, given the filtered probability space $(\Omega,  \mathcal{F}, \mathcal{F}_t,  \mathbb{P})$, $\omega\in \Omega$, $\tau\in\mathbb{R}$ and  initial value $u_\tau\in H$, a function $u(\cdot,\tau, \omega,u_\tau): [\tau,\infty)\rightarrow H$ is called a weak solution of system \eqref{NS-clor}-\eqref{NS-1clorb}, if for every $T>\tau$,
\begin{enumerate}
\item[(1)] $  u\in C([\tau,T];H_w)\cap L^{\infty}(\tau,T;H)\cap L^2(\tau,T;V)$;
\item[(2)]for all $\phi\in V$ and $t>\tau$, it satisfies
\begin{eqnarray}\nonumber
(  u(t), \phi)-\int_\tau^tb(u(s),\phi,u(s))ds&=&( u_\tau, \phi)+\nu\int_\tau^t\langle  u(s), A\phi\rangle ds+\int_\tau^t\langle f, \phi\rangle ds\\
&& +\int_\tau^t\int_{D} G(x,u(s))\phi(x)\zeta_{\delta}(\theta_s\omega) dx ds,
\end{eqnarray}
in the sense of distribution on $[\tau, \infty)$.
\end{enumerate}
\end{definition}

Throughout the paper, assume that the nonlinear diffusion term $G(x,u): D\times \mathbb{R}^3 \rightarrow \mathbb{R}^3 $ is a continuous function satisfying
\begin{eqnarray}\label{noncond}
|G(x,u)|\leq \eta_1(x)|u|^p+\eta_2(x),
\end{eqnarray}
where $0\leq p< 1$, $\eta_1\in  L^{\frac{2}{1-p}}(D) $ and $\eta_2\in L^2(D)$.
\begin{theorem}\label{exithe}
Assume $f\in  V^\ast$ and the initial value $u_\tau\in H$, then for every  $\omega\in \Omega$,  there exists at least one weak solution to \eqref{NS-clor}-\eqref{NS-1clorb} on $[\tau, \infty)$ with $u(\tau)=u_\tau$, which satisfies the following energy inequality
\begin{eqnarray}\label{energine}
\frac{1}{2} \frac{d}{dt}\|u(t)\|^2+ \nu  \|u(t)\|_V^2 \leq   \langle f, u(t)\rangle  +\zeta_{\delta}(\theta_t\omega)\big(G(x,u(t)), u(t)\big), \mbox{~~for~} t\in [\tau, T],
\end{eqnarray}
	for every $T\geq \tau$ in the distribution sense. Furthermore, there exists a constant $C$ such that
\begin{eqnarray}\label{energinese}\nonumber
\|u(t)\|^2 &\leq& e^ {-\frac{\nu\lambda_1(t-\tau)}{2}}\|u(\tau)\|^2 +C \int_\tau^te^ {-\frac{\nu\lambda_1(t-s)}{2}}\|u(\tau)\|^2\big(\|f\|^2_{V^\ast}\\
&&+  |\zeta_{\delta}(\theta_s\omega)|^{\frac{2}{1-p}}\|\eta_1\|^{\frac{2}{1-p}}_{L^{\frac{2}{1-p}}}+  |\zeta_{\delta}(\theta_s\omega)|^{2}\|\eta_2\|^2_{L^2} \big) ds  ,
\end{eqnarray}
 for all $t \geq \tau$.
\end{theorem}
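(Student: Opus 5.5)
The approach is the standard Faedo--Galerkin scheme, carried out pathwise for each fixed $\omega\in\Omega$. The key observation is that $s\mapsto\zeta_\delta(\theta_s\omega)$ is a continuous real function of time for every $\omega$. Indeed, $\omega$ is continuous with at most sublinear growth at $-\infty$ (on a $\theta$-invariant full-measure set, which we may take as $\Omega$), so the integral defining $\zeta_\delta$ converges and depends continuously on $t$. Thus \eqref{NS-clor} is a deterministic Navier--Stokes system with a time-dependent, nonlinear but sublinear forcing $G(x,u)\zeta_\delta(\theta_t\omega)$, and no stochastic integration is needed.

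First, I project onto $H_m=\mathrm{span}\{w_1,\dots,w_m\}$ with $P_m$ the orthogonal projection. I then seek $u_m(t)=\sum_{j\le m}g_{jm}(t)w_j$ solving
$$\frac{d}{dt}u_m+\nu Au_m+P_mB(u_m,u_m)=P_mf+\zeta_\delta(\theta_t\omega)P_mG(\cdot,u_m),\qquad u_m(\tau)=P_mu_\tau .$$
This is an ODE with continuous right-hand side, since $G$ is continuous, and Peano's theorem gives a local solution. Global existence follows from the a priori estimate below.

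Second, I take the inner product with $u_m$ and use $b(u_m,u_m,u_m)=0$ to obtain the energy equality
$$\frac12\frac{d}{dt}\|u_m\|^2+\nu\|u_m\|_V^2=\langle f,u_m\rangle+\zeta_\delta(\theta_t\omega)(G(x,u_m),u_m).$$
The noise term is controlled using \eqref{noncond} and H\"older's inequality with exponents $\frac{2}{1-p},\frac{2}{p},2$:
$$|(G,u)|\le\|\eta_1\|_{L^{\frac{2}{1-p}}}\|u\|^{1+p}+\|\eta_2\|\|u\|.$$
Young's inequality with exponents $\frac{2}{1+p},\frac{2}{1-p}$, combined with Poincar\'e ($\lambda_1\|u\|^2\le\|u\|_V^2$), absorbs these terms into $\frac{\nu}{4}\|u\|_V^2$. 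The remainder is $C(\|f\|_{V^*}^2+|\zeta_\delta|^{\frac{2}{1-p}}\|\eta_1\|^{\frac{2}{1-p}}+|\zeta_\delta|^2\|\eta_2\|^2)$. This yields
$$\frac{d}{dt}\|u_m\|^2+\frac{\nu\lambda_1}{2}\|u_m\|^2+\nu\|u_m\|_V^2\le C(\cdots),$$
and Gronwall gives the bound \eqref{energinese}, without the stray factor $\|u(\tau)\|^2$ inside the integral, which I read as a typo. This gives uniform bounds in $L^\infty(\tau,T;H)\cap L^2(\tau,T;V)$. For the time derivative, $\|B(u,u)\|_{V^*}\le c\|u\|^{1/2}\|u\|_V^{3/2}$ in 3D, so $\frac{d}{dt}u_m$ is bounded in $L^{4/3}(\tau,T;V^*)$, since the $G$ term lies in $L^2(\tau,T;H)$.

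Third, I pass to the limit via Aubin--Lions. Along a subsequence, $u_m\to u$ weakly-$*$ in $L^\infty(H)$, weakly in $L^2(V)$, strongly in $L^2(\tau,T;H)$, and in $C([\tau,T];H_w)$ by an Arzel\`a--Ascoli argument in the metric \eqref{dwh}. Strong $L^2$ convergence, together with a.e.\ convergence, continuity of $G$ and the growth bound, makes $G(x,u_m)\to G(x,u)$ in $L^2((\tau,T)\times D)$ by Vitali, because $|G(x,u_m)|^2$ is uniformly integrable when $p<1$. The trilinear term converges by the usual argument. A diagonal procedure over $T\in\mathbb N$ then yields a solution on $[\tau,\infty)$.

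The main obstacle is the energy inequality \eqref{energine} in the distributional sense, since only an inequality survives the limit. I will integrate the Galerkin energy equality against a nonnegative $\psi\in C_c^\infty$. Using strong $L^2(H)$ convergence of $u_m$ and of $G(x,u_m)$, the terms $\int\psi'\|u_m\|^2$, $\int\psi\langle f,u_m\rangle$ and $\int\psi\zeta_\delta(G(u_m),u_m)$ pass to the limit. Weak lower semicontinuity of $\int\psi\|u_m\|_V^2$ then gives the inequality. The exponential estimate \eqref{energinese} at every $t$ follows either by passing to the limit in the Galerkin Gronwall estimate, using lower semicontinuity of $\|u_m(t)\|$ under weak convergence in $H$ for each $t$, or by applying the Gronwall argument directly to \eqref{energine}.
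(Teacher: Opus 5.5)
Your proposal is correct and follows the paper's proof essentially step for step: pathwise Galerkin approximation, the energy estimate via H\"older, Young, Poincar\'e and Gronwall, Aubin--Lions compactness, the distributional energy inequality via a nonnegative test function and weak lower semicontinuity of $\int\psi\|u_m\|_V^2$, and the exponential bound obtained by passing to the limit in the Galerkin Gronwall estimate. Your deviations are minor and in your favor. You use Peano rather than the paper's unjustified claim of a unique local solution, you get strong $L^2$ convergence of $G(x,u_m)$ by Vitali rather than weak-$*$ convergence via Lions' lemma, and you read the factor $\|u(\tau)\|^2$ inside the integral as a typo, which the paper's own Step 5 confirms. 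One small correction: $\int\psi\langle f,u_m\rangle$ passes to the limit by weak convergence in $L^2(\tau,T;V)$, not by strong $L^2(H)$ convergence, since $f\in V^\ast$.
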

\begin{proof}
{\bf Step 1:} A local approximated solution of \eqref{NS-clor}-\eqref{NS-1clorb} is constructed.

 Let $\{w_{j}\}_{\scriptstyle j=1}^\infty$ be an orthonormal basis of $H$ and  $W_{m}=$span\{$w_{1},\cdots,w_{m}$\} be subspace of $V$. The projector $P_{m}: H\rightarrow V_m$ is given by
 $$
 P_{m}v=\sum\limits_{\scriptstyle j=1}^m(v,w_{j})w_{j} \mbox{~for~all~} v\in H.
 $$
Next, we construct the approximated solution
$$u_{m}=\sum\limits_{\scriptstyle j=1}^m  g_{j,m}(t)w_{j}$$
satisfying the following equations
\begin{eqnarray}
\left\{\begin{array}{ll}
(u'_m(t), w_j)+\nu (\nabla u_m(t), \nabla w_j)+b(u_m(t),u_m(t),w_j)
=(f, w_j)+\zeta_{\delta}(\theta_t\omega)(G(x,u_m(t)),w_j),\\
u_{m}(\tau)=P_{m}u_\tau,   \\
\end{array}\right.\label{ode1}
\end{eqnarray}
for all $1\leq j\leq m$.
The above Cauchy problem   is a well-known ordinary functional differential equations with respect to the
unknown variables $\{g_{j,m}(t)\}_{j=1}^m$, which has a unique local solution (in an interval
$[\tau, t^*]$ with $\tau<t^*\leq T)$ by the local existence of solution for ordinary differential equations.

{\bf Step 2:}  The {\it a priori} estimates for $\{u_{m}\}$ is  derived.

Multiplying the first equation in \eqref{ode1} by $g_{j,m}(t)$ and summing in $j$, one obtains that
   \begin{eqnarray}   \label{estm1}\nonumber
  \frac{d}{dt}\|u_{m}\|^2+2\nu\| u_{m}\|_V^2&=&2(f,u_{m})+2\zeta_{\delta}(\theta_t\omega)(G(x,u_m), u_m)\\
  &\leq&\frac{4}{\nu }\|f\|_{V^\ast}^2+ \nu\|u_{m}\|_{V}^2+2|\zeta_{\delta}(\theta_t\omega)||(G(x,u_m), u_m)|.
 \end{eqnarray}
From \eqref{noncond} and the pathwise continuity of $\zeta_{\delta}(\theta_t\omega)$, one has
 \begin{eqnarray} \label{enr12}
 |\zeta_{\delta}(\theta_t\omega)||(G(x,u_m), u_m)|\leq C \big( \|\eta_1\|^{\frac{2}{1-p}}_{L^{\frac{2}{1-p}}(D)}+ \|\eta_2\|^2_{L^2(D)}+\|u_{m}\|^2\big),
 \end{eqnarray}
for all $\tau<t\leq T$. By plugging \eqref{enr12} into \eqref{estm1} and using Gronwall's inequality, one has
 \begin{eqnarray} \label{enr13}
\{u_m\}^\infty_{m=1} \mbox{~is~bounded~in~} L^{\infty}(\tau,T;H)\cap L^2(\tau,T;V).
 \end{eqnarray}
 Since
\begin{eqnarray} \label{enr14}
\frac{\partial u_m}{\partial t}=\nu\triangle u_m -P[u_m \cdot\nabla u_m]+P f+ P G(x,u_m)\zeta_{\delta}(\theta_t\omega),
 \end{eqnarray}
where $P$ is the Helmholz-Leray orthogonal projection in $\boldsymbol L^{2}(D)$ onto the space $H$. Simple computation shows that
\begin{eqnarray}\label{unimf33}
\|G(x,u_m)\|_{L^{\infty}(\tau,T; \boldsymbol L^{2}(D))}\leq \|\eta_1\|_{ L^{\frac{2}{1-p}}(D)}\|u_m\|^{p}_{L^{\infty}(\tau,T;H)}+\|\eta_2\|_{ L^{2}(D)}\leq C.
\end{eqnarray}
Then, one obtains that
\begin{eqnarray}\label{unifrom2}
    \Big\{\frac{\partial u_{m}}{\partial t}\Big\} \mbox{~is~bounded~in~} L^{\frac{4}{3}}(\tau,T;V').
     \end{eqnarray}

{\bf Step 3:} The limit is passed to derive the global solution by compact argument.

Form the Aubin-Lions Lemma, the following space
$$
W=\big\{u|~~u\in L^{2}(\tau,T;V): \frac{\partial u}{\partial t}\in L^{\frac{4}{3}}(\tau,T; V')\big\}
$$
is compactly embedded in $L^{2}(\tau,T; H)$.  Combining the preceding uniform estimates \eqref{enr13} and \eqref{unifrom2}, we
can deduce that there exists a subsequence $u_{m}$ (without relabeling) such that, when $m\rightarrow \infty$,
\begin{eqnarray}\label{comconver1}
&&u_{m}\rightarrow u \mbox{~stongly~in~} L^2\big(\tau,T; H\big) \mbox{~and~a.e.~};\\ \label{comconver2}
&&u_{m}\rightharpoonup  u \mbox {~weakly~}\ast \mbox {~in~} L^{\infty}(\tau,T;H);\\ \label{comconver4}
&&u_{m}\rightharpoonup u \mbox{~weakly~in~} L^{2}(\tau,T;V);\\ \label{comconver5}
&&\frac{\partial u_{m}}{\partial t} \rightharpoonup \frac{\partial u}{\partial t} \mbox{~weakly~in~} L^{\frac{4}{3}}(\tau,T;V'),
\end{eqnarray}
 with $u\in C([\tau,T];H_w)\cap L^{\infty}(\tau,T;H)\cap L^2(\tau,T;V)$.  By the continuity of $G$, one also has
$$
 G(x,u_m)  \rightarrow  G(x,u)~~ \mbox{~a.e.~},
$$
by combining \eqref{unimf33} and using Lions' Lemma 1.3 in chapter 1 of \cite{L1969}, we derive that
\begin{eqnarray}\label{nonconv}
  G(x,u_m)\rightharpoonup   G(x,u) \mbox {~weakly~}\ast \mbox{~in~} L^{\infty}(\tau,T; \boldsymbol L^{2}(D)).
\end{eqnarray}
Thus, passing to the limit of \eqref{ode1}, we conclude that $u$ is exactly a weak solution to \eqref{NS-clor}-\eqref{NS-1clorb}.

{\bf Step 4:} Next, the inequality \eqref{energine} is established.

Let $ \theta \in C^{\infty}_{0}([\tau,T])$ be a positive function. From \eqref{estm1}, one has
\begin{eqnarray}   \label{estmenin}\nonumber
 && -\int^{T}_{\tau}\theta'(s) \|u_{m}(s)\|^2ds+2\nu\liminf_{m\to\infty}(\int^{T}_{\tau}\theta(s)\| u_{m}(s)\|_V^2ds)\\
  &=&2\int^{T}_{\tau}\theta(s) (f,u_{m}(s))ds+2\int^{T}_{\tau} \theta(s)\zeta_{\delta}(\theta_s\omega)(G(x,u_m(s)), u_m(s))ds.
 \end{eqnarray}
Combining \eqref{comconver1}, \eqref{comconver4} and \eqref{nonconv}, therefore
\begin{eqnarray}   \label{estmeninf}\nonumber
&& -\int^{T}_{\tau}\theta'(s) \|u(s)\|^2ds+2\nu\int^{T}_{\tau}\theta(s)\| u(s)\|_V^2ds\\
 &\leq& \liminf_{m\to\infty}\Big(-\int^{T}_{\tau}\theta'(s) \|u_{m}(s)\|^2ds+2\nu\int^{T}_{\tau}\theta(s)\| u_{m}(s)\|_V^2ds\Big)\\
  &\leq&2\int^{T}_{\tau}\theta(s) (f,u(s))ds+2\int^{T}_{\tau} \theta(s)\zeta_{\delta}(\theta_s\omega)(G(x,u(s)), u(s))ds,
 \end{eqnarray}
which means that \eqref{energine} satisfies in the distribution sense.

{\bf Step 5:}  Finally, the inequality \eqref{energinese} is established.

From \eqref{estm1},  one has
\begin{eqnarray}   \label{energinese1}\nonumber
  \frac{d}{dt}\|u_{m}\|^2+\nu\| u_{m}\|_V^2  \leq\frac{4}{\nu }\|f\|_{V^\ast}^2+2|\zeta_{\delta}(\theta_t\omega)|\big(\|\eta_1\|_{L^{\frac{2}{1-p}}}\|u_{m}\|^{p+1}+ \|\eta_2\|_{L^2}\|u_{m}\|   \big)
 \end{eqnarray}
By Poincar\'{e} inequality and Young's inequality, it follows that
\begin{eqnarray}   \label{energinese2}\nonumber
  \frac{d}{dt}\|u_{m}\|^2+\frac{\nu\lambda_1}{2}\| u_{m}\|^2  \leq \frac{4}{\nu }\|f\|_{V^\ast}^2+C\big( |\zeta_{\delta}(\theta_t\omega)|^{\frac{2}{1-p}}\|\eta_1\|^{\frac{2}{1-p}}_{L^{\frac{2}{1-p}}}+ |\zeta_{\delta}(\theta_t\omega)|^{2}\|\eta_2\|^2_{L^2} \big)
 \end{eqnarray}
Using Gronwall’s inequality, we obtain that
\begin{eqnarray}\label{energinese3}\nonumber
\|u_m(t)\|^2 &\leq& e^ {-\frac{\nu\lambda_1}{2}t}\Big(e^ {\frac{\nu\lambda_1}{2}\tau}\|u_m(\tau)\|^2 +C \int_\tau^t e^ {\frac{\nu\lambda_1}{2}s}\big(\|f\|^2_{V^\ast}\\
&&+  |\zeta_{\delta}(\theta_s\omega)|^{\frac{2}{1-p}}\|\eta_1\|^{\frac{2}{1-p}}_{L^{\frac{2}{1-p}}}+  |\zeta_{\delta}(\theta_s\omega)|^{2}\|\eta_2\|^2_{L^2}  \big) ds  \Big),
\end{eqnarray}
which implies \eqref{energinese} by passing to limit.
\end{proof}

\subsection{Absorbing set and weakly upper semicontinuity}
\begin{lemma}\label{attrset1} 
Given initial data $ u_{\tau} \in  H $, $f\in V^*$, $\eta_1\in  L^{\frac{2}{1-p}}(D)$ and $\eta_2\in L^{2}(D)$, 	 then the system \eqref{NS-clor}-\eqref{NS-1clorb} has an absorbing set
	\begin{eqnarray}\label{absorbingset}
		\mathbb{Y}(\omega)=\big\{u\in H,~\|u\|^{2} \le R^{2}(\omega)\big \},
	\end{eqnarray}
	with  $R^{2}(\omega)$ is defined by 
\begin{eqnarray}\label{aborid}
R^{2}(\omega)= C \big(\|\eta_1\|^{\frac{2}{1-p}}_{L^{\frac{2}{1-p}}}+\|\eta_2\|^{2}_{L^{2}}+\|f\|^{2}_{V^\ast}\big),
\end{eqnarray}
where $C$ is a constant depending only on $p$, $\omega$, $\nu$ and  $\lambda_1$.
\end{lemma}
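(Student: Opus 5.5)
We take the energy estimate \eqref{energinese} of Theorem \ref{exithe} as the starting point and work in the pullback sense required by the definition of random absorbing set, $R(t,\theta_{-t}\omega)D\subset \mathbb{Y}(\omega)$ for $t\ge t_0(D)$. We fix a bounded set $D\subset H$ with $\|u_\tau\|\le \rho$ for $u_\tau\in D$. Any weak solution from Theorem \ref{exithe} starting at time $0$ under the sample $\theta_{-t}\omega$ satisfies
\begin{eqnarray*}
\|u(t,\theta_{-t}\omega,u_0)\|^2&\le& e^{-\frac{\nu\lambda_1 t}{2}}\rho^2+C\int_0^t e^{-\frac{\nu\lambda_1(t-s)}{2}}\Big(\|f\|^2_{V^\ast}+|\zeta_\delta(\theta_{s-t}\omega)|^{\frac{2}{1-p}}\|\eta_1\|^{\frac{2}{1-p}}_{L^{\frac{2}{1-p}}}+|\zeta_\delta(\theta_{s-t}\omega)|^{2}\|\eta_2\|^2_{L^2}\Big)ds.
\end{eqnarray*}
Here the spurious factor $\|u(\tau)\|^2$ inside the integral in \eqref{energinese} is read as a typo; Step 5 of that proof gives the bound without it. We then change variables $r=s-t$, which turns the integral into $\int_{-t}^0 e^{\frac{\nu\lambda_1 r}{2}}(\cdots)(\theta_r\omega)\,dr$. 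This is bounded by the corresponding integral over $(-\infty,0]$, and that bound is independent of $t$ and of $u_0$.

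The key step is to show that $\int_{-\infty}^0 e^{\frac{\nu\lambda_1 r}{2}}|\zeta_\delta(\theta_r\omega)|^{q}\,dr<\infty$ for $q=2$ and $q=\frac{2}{1-p}$. The colored noise $\zeta_\delta(\theta_t\omega)$ is a stationary Ornstein--Uhlenbeck process. By the standard temperedness result (via the ergodic theorem or the sublinear growth of the Wiener path), $|\zeta_\delta(\theta_r\omega)|\le C_\varepsilon(\omega)e^{\varepsilon|r|}$ for every $\varepsilon>0$, on a $\theta$-invariant full-measure set of $\omega$ to which we restrict $\Omega$. Choosing $\varepsilon q<\frac{\nu\lambda_1}{4}$ makes both integrals finite, with values depending only on $\omega,p,\nu,\lambda_1,\delta$. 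Justifying this growth bound is the main obstacle. I would first try to derive it directly from the representation $\zeta_\delta(\theta_t\omega)=-\frac1{\delta^2}\int_{-\infty}^0e^{s/\delta}\theta_t\omega(s)ds$ together with $\lim_{|t|\to\infty}\omega(t)/t=0$.

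With these integrals finite, we set $R^2(\omega)$ as in \eqref{aborid}, with $C=C(\omega)$ collecting the factor $\frac{4}{\nu}$, $\int_{-\infty}^0e^{\frac{\nu\lambda_1 r}{2}}dr$ and the two noise integrals, multiplied by $2$ to absorb the initial-data term. We take $t_0(D)$ such that $e^{-\frac{\nu\lambda_1 t_0}{2}}\rho^2\le \frac12R^2(\omega)$. Then for $t\ge t_0$ every value $u(t)$ of every trajectory in $\mathcal{E}^{\theta}_{\theta_{-t}\omega}([0,\infty))$ starting in $D$ lies in $\mathbb{Y}(\omega)$. Hence $R(t,\theta_{-t}\omega)D\subset\mathbb{Y}(\omega)$. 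Finally, $\mathbb{Y}(\omega)$ is a closed ball whose radius is a random variable, since it is measurable in $\omega$ as an integral of a measurable process. It is therefore a random set, which completes the argument.
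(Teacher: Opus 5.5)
Your proof is correct. It follows the paper's skeleton: write \eqref{energinese} in pullback form, bound the three forcing integrals, then choose $R^2(\omega)$. It differs from the paper at the one step that matters.

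\textbf{Where you differ from the paper.} From $\lim_{t\to\pm\infty}|\zeta_\delta(\theta_t\omega)|/t=0$ the paper concludes that $|\zeta_\delta(\theta_t\omega)|\le M(\omega)$ for \emph{all} $t\in\mathbb{R}$, and then pulls $M^{q}$ out of the integral. That inference is invalid, since sublinear growth is not boundedness. The conclusion in fact fails almost surely: $\zeta_\delta(\theta_t\omega)$ is a stationary ergodic Gaussian process, so $\sup_{t}|\zeta_\delta(\theta_t\omega)|=\infty$ a.s. You keep $|\zeta_\delta(\theta_r\omega)|^q$ inside $\int_{-\infty}^0 e^{\nu\lambda_1 r/2}(\cdot)\,dr$ and use only subexponential growth. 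That is what actually makes the estimate valid.

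\textbf{Your planned growth bound works.} From $|\omega(t)|\le C(\omega)(1+|t|)$ you get $|\zeta_\delta(\theta_t\omega)|\le \delta^{-2}\int_{-\infty}^0 e^{s/\delta}|\omega(s+t)-\omega(t)|\,ds\le C'(\omega)(1+|t|)$. Alternatively, the sublinear growth of $\zeta_\delta$ that the paper itself quotes already suffices once it is combined with the exponential weight.

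\textbf{Further points you handle.} You also cover three things the paper glosses over: the stray factor $\|u(\tau)\|^2$ in \eqref{energinese}, absorbing the decaying initial-data term by doubling the constant, and the measurability of $R(\omega)$.

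\textbf{One caveat.} In the degenerate case $f=0$, $\eta_1=\eta_2=0$, you get $R(\omega)=0$ and no $t_0$ exists. The paper's statement shares this problem. Adding a positive constant to $R^2(\omega)$ fixes it.
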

\begin{proof}
	By \eqref{energinese},   one has 
	\begin{eqnarray}\label{energiabs}\nonumber
\|u(t,\theta_{-t} \omega, u_0)\|^2 &\leq& e^ {-\frac{\nu\lambda_1}{2}t}\Big(\|u_0\|^2 +C \int_0^t e^ {\frac{\nu\lambda_1}{2}s}\big(\|f\|^2_{V^\ast}\\
&&+  |\zeta_{\delta}(\theta_{s-t}\omega)|^{\frac{2}{1-p}}\|\eta_1\|^{\frac{2}{1-p}}_{L^{\frac{2}{1-p}}}+  |\zeta_{\delta}(\theta_{s-t}\omega)|^{2}\|\eta_2\|^2_{L^2} \big) ds  \Big),
\end{eqnarray}
 for all $t \geq 0$.

Firstly, it is easy to get
\begin{eqnarray} \label{omeest13}
\int_0^t e^ {-\frac{\nu\lambda_1}{2}(t-s)} \|f\|^2_{V^\ast}ds \leq C_1  \|f\|^2_{V^\ast}.\quad \quad\quad
	\end{eqnarray}
where $C_1$ is a constant depending only on $\nu$ and  $\lambda_1$.

	From Lemma 2.9 in \cite{GW2020}, there exists a $\{\theta_t\}_{t\in \mathbb{R}}$-invariant subset (still denoted by) $\Omega$ of  full measure, such that for all $\omega \in \Omega$,
	\begin{equation}\label{omegare}
		 \lim\limits_{t\rightarrow \pm \infty} \frac{|\zeta_{\delta}(\theta_{t}\omega)|}{t}=0 \quad \mbox{~for~every~} \delta\in (0, 1].
	\end{equation}
	Thus, for every $\omega \in \Omega$, there exists a constant  $M>1$ depending on $\omega$ such that
\begin{eqnarray}\label{omeest}
|\zeta_{\delta}(\theta_{t}\omega)| \leq M \mbox{~for~all~} t\in  \mathbb{R}.
\end{eqnarray}
	Therefore, for all $t\geq0$,
	\begin{eqnarray}\label{omeest11}
		 \int_0^t e^ {-\frac{\nu\lambda_1}{2}(t-s)} |\zeta_{\delta}(\theta_{s-t}\omega)|^{\frac{2}{1-p}}\|\eta_1\|^{\frac{2}{1-p}}_{L^{\frac{2}{1-p}}}ds
\leq M^{\frac{2}{1-p}} \|\eta_1\|^{\frac{2}{1-p}}_{L^{\frac{2}{1-p}}}\int_0^t e^ {-\frac{\nu\lambda_1}{2}(t-s)} ds 
\leq C_2 \|\eta_1\|^{\frac{2}{1-p}}_{L^{\frac{2}{1-p}}},
	\end{eqnarray}
here $C_2$ is a constant depending only on $p$, $\omega$, $\nu$ and  $\lambda_1$.

Similarly,  one also has
	\begin{eqnarray}\label{omeest12}
	 \int_0^t e^ {-\frac{\nu\lambda_1}{2}(t-s)} |\zeta_{\delta}(\theta_{s-t}\omega)|^{2}\|\eta_2\|^2_{L^2} ds
\leq C_3\|\eta_2\|^2_{L^2}, 
	\end{eqnarray}
here  $C_3$ is a constant depending only on $p$, $\omega$, $\nu$ and  $\lambda_1$.

Combining \eqref{energiabs}, \eqref{omeest11} and \eqref{omeest12}, we have
	\begin{eqnarray}\label{energiabszui}
\|u(t,\theta_{-t} \omega, u_0)\|^2 &\leq& e^ {-\frac{\nu\lambda_1}{2}t} \|u_0\|^2 + C  \big(\|\eta_1\|^{\frac{2}{1-p}}_{L^{\frac{2}{1-p}}}+\|\eta_2\|^{2}_{L^{2}}+\|f\|^{2}_{V^\ast}\big),
\end{eqnarray}
here $C$ is a constant depending only on $p$, $\omega$, $\nu$ and  $\lambda_1$. By	taking $R^{2}(\omega)$ as \eqref{aborid}, the absorbing set  $ \mathbb{Y}(\omega)$ defined by \eqref{absorbingset} for system \eqref{NS-clor}-\eqref{NS-1clorb} is obtained.
\end{proof}
For every $m\in  \mathbb{N}$, define
\begin{eqnarray}
\Omega_m=\big\{\omega\in \Omega| ~|\omega(t)|\leq |t|\mbox{~~for~all~} |t|\geq m~\big\}.
\end{eqnarray}
Next, we prove the weak upper semicontinuity of the weak solution to system  \eqref{NS-clor}-\eqref{NS-1clorb}.
\begin{proposition}\label{sem-con}
Given $m\in  \mathbb{N}$, $\tau, \hat{t}\in \mathbb{R}$ and $\omega,\omega_n\in \Omega_m$, suppose $u(\cdot,\omega_n,u_{\tau,n})$ is a solution of system \eqref{NS-clor}-\eqref{NS-1clorb} with initial data $u_{\tau,n}$. If $t_n\rightarrow \hat{t}$, $\omega_n\rightarrow \omega$ and
$u_{\tau,n}\rightharpoonup u_\tau$ in $H$, then there exists a subsequence  (without relabeling) such that
\begin{eqnarray}\label{weauppes}
u(t_n,\omega_n,u_{\tau,n}) \rightharpoonup u(\hat{t},\omega,u_\tau) \mbox{~~in~} H,
\end{eqnarray}
where $u(\cdot,\omega,u_\tau)$ is a weak solution of  problem \eqref{NS-clor}-\eqref{NS-1clorb} with initial data $u_\tau$.
\end{proposition}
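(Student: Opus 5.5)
The plan is a compactness argument of the same type as Steps 2--3 in the proof of Theorem \ref{exithe}, but run on the sequence of weak solutions $u_n:=u(\cdot,\omega_n,u_{\tau,n})$ rather than on Galerkin approximations. Fix $T>\max\{\hat t,\sup_n t_n\}$.

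\textbf{Step 1: uniform bounds.} Since $u_{\tau,n}\rightharpoonup u_\tau$, the norms $\|u_{\tau,n}\|$ are bounded. Since $\omega_n\to\omega$ in the compact-open topology and all lie in $\Omega_m$, I will show that the colored noise is uniformly bounded on compacts:
\[
\sup_n\sup_{s\in[\tau,T]}|\zeta_\delta(\theta_s\omega_n)|\le C(m,T).
\]
The route is the representation $\zeta_\delta(\theta_s\omega)=-\delta^{-2}\int_{-\infty}^0 e^{r/\delta}\big(\omega(r+s)-\omega(s)\big)\,dr$. The bound $|\omega_n(t)|\le|t|$ for $|t|\ge m$ gives a uniform integrable majorant on the tail. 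Uniform convergence of $\omega_n$ on compacts handles the rest. The same argument also gives $\zeta_\delta(\theta_s\omega_n)\to\zeta_\delta(\theta_s\omega)$ uniformly on $[\tau,T]$. Feeding this into the energy inequality \eqref{energine}, together with \eqref{noncond} and Gronwall exactly as in Step 2 of Theorem \ref{exithe}, yields that $\{u_n\}$ is bounded in $L^\infty(\tau,T;H)\cap L^2(\tau,T;V)$. Using the weak formulation and \eqref{unimf33}, $\{\partial_t u_n\}$ is bounded in $L^{4/3}(\tau,T;V^*)$.

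\textbf{Step 2: extraction and passage to the limit.} By Aubin--Lions, a subsequence satisfies $u_n\to u$ strongly in $L^2(\tau,T;H)$ and a.e., weakly in $L^2(\tau,T;V)$, and weakly-$*$ in $L^\infty(\tau,T;H)$. From the bound on $\partial_t u_n$ together with the $L^\infty(H)$ bound, $\{u_n\}$ is equicontinuous in $H_w$ on $[\tau,T]$, so Arzel\`a--Ascoli in $C([\tau,T];H_w)$, which is metrizable via \eqref{dwh}, gives $u_n\to u$ in $C([\tau,T];H_w)$.

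The limit is identified term by term in the weak formulation:
\begin{itemize}
\item the trilinear term passes by the strong $L^2(H)$ convergence combined with the weak $L^2(V)$ convergence;
\item $G(x,u_n)\rightharpoonup G(x,u)$ weakly-$*$ follows from continuity of $G$ and Lions' lemma, as in \eqref{nonconv};
\item combined with the uniform convergence of $\zeta_\delta(\theta_s\omega_n)$, this gives convergence of the noise integral;
\item the initial term converges because $(u_{\tau,n},\phi)\to(u_\tau,\phi)$.
\end{itemize}
Hence $u$ is a weak solution with data $u_\tau$ for the sample $\omega$. If the energy inequality is part of the solution concept, it passes to the limit by the lower-semicontinuity argument of Step 4.

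\textbf{Step 3: conclusion.} For $\phi\in H$,
\[
(u_n(t_n),\phi)-(u(\hat t),\phi)=\big(u_n(t_n)-u(t_n),\phi\big)+\big(u(t_n)-u(\hat t),\phi\big).
\]
The first term tends to zero by uniform weak convergence on $[\tau,T]$; the second by $u\in C([\tau,T];H_w)$. This gives \eqref{weauppes}.

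The main obstacle I expect is Step 1: obtaining bounds on $\zeta_\delta(\theta_s\omega_n)$, and its convergence, that are uniform in $n$. This is exactly where the restriction $\omega_n\in\Omega_m$ is needed, since it controls the integral over the infinite past. A secondary point is the equicontinuity in $H_w$, which I would prove by testing against $\phi\in V$ and then using density together with the uniform $H$ bound.
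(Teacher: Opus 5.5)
Your proposal is correct and follows essentially the same route as the paper: uniform $L^\infty(\tau,T;H)\cap L^2(\tau,T;V)$ and $L^{4/3}(\tau,T;V^*)$ bounds from the energy inequality together with a uniform bound on $\zeta_\delta(\theta_s\omega_n)$, which the paper imports from Lemma 3.3 of \cite{GW2020} and you instead derive directly from the $\Omega_m$ growth condition; then Aubin--Lions and an Arzel\`a--Ascoli argument identify the limit as a weak solution for the sample $\omega$. The differences are cosmetic. You run Arzel\`a--Ascoli in $C([\tau,T];H_w)$ rather than in $C([\tau,\hat t+1];V^*)$, and you explicitly use the uniform convergence $\zeta_\delta(\theta_s\omega_n)\to\zeta_\delta(\theta_s\omega)$ on $[\tau,T]$ to pass to the limit in the noise term, a step the paper leaves implicit.
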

\begin{proof}
For convenience,  $u(t,\omega_n,u_{\tau,n})$ is abbreviated as $u^n(t)$.  From \eqref{energine}, one has
\begin{eqnarray}\label{unifest1}
&&\|u^n(t)\|^2+2\nu \int_{\tau}^t\|u^n(s)\|_V^2 ds\\ \nonumber
&\leq& \|u_{\tau,n}\|^2+2\int_{\tau}^t\langle f, u^n(s)\rangle ds
 +2\int_{\tau}^t \int_D G(x,u^n(s))u^n(s) \zeta_{\delta}(\theta_s\omega_n) dx ds,
\end{eqnarray}
for all $t\in [\tau, \hat{t}+1]$.
\begin{eqnarray}\label{unifest3}
\big|\int_{\tau}^t\langle f, u^n(s)\rangle ds \big|\leq \int_{\tau}^t\big(\| f\|^2+ \| u^n(s)\|^2 \big) ds.
\end{eqnarray}
Since $u_{\tau,n}\rightharpoonup u_\tau$ in $H$, there exists constant $C$ such that
\begin{eqnarray}\label{unifest2}
\sup\limits_{n\in \mathbb{N}}\|u_{\tau,n}\| \leq C.
\end{eqnarray}
By Lemma 3.3 in \cite{GW2020}, there exist  $N$ and $C$  depending on $\delta, \hat{t}, \tau, \omega$ such that
\begin{eqnarray}\label{unifest4}\nonumber
\sup\limits_{n\geq N}\sup\limits_{s\in  [\tau, \hat{t}+1]}|\zeta_{\delta}(\theta_s\omega_n) |\leq \sup\limits_{s\in  [\tau, \hat{t}+1]} |\zeta_{\delta}(\theta_s\omega)|+1 \leq C.
\end{eqnarray}
Then, one has
\begin{eqnarray}\label{unifest5}\nonumber
&&\Big|\int_{\tau}^t \int_D G(x,u^n(s))u^n(s) \zeta_{\delta}(\theta_s\omega_n) dx ds\Big| \\ \nonumber
&\leq& C \int_{\tau}^t \int_D |G(x,u^n(s))||u^n(s)|dx ds\\  \nonumber
&\leq& C\int_{\tau}^t \big( \|\eta_1\|^{\frac{2}{1-p}}_{L^{\frac{2}{1-p}}(D)}+ \|\eta_2\|^2_{L^2(D)}+\|u^n(s)\|^2\big)ds\\
&\leq& C\Big(1+\int_{\tau}^t  \|u^n(s)\|^2 ds \Big).
\end{eqnarray}
By combining \eqref{unifest1}-\eqref{unifest5} and using Gronwall’s inequality, one has
\begin{eqnarray}\label{unifest6}
\sup\limits_{t\in  [\tau, \hat{t}+1]}\|u^n(t)\|^2+2\nu \int_{\tau}^{\hat{t}+1}\|u^n(s)\|_V^2 ds\leq C.
\end{eqnarray}
Similar with the estimate of \eqref{unifrom2}, we also have
\begin{eqnarray}\label{unifest8}
 \Big\|\frac{\partial u^n}{\partial t}\Big\|^2_{L^{\frac{4}{3}}(\tau,\hat{t}+1;V')}\leq C.
\end{eqnarray}
Immediately, from \eqref{unifest6}, we have
 \begin{eqnarray}\label{unifest7}
 u^n (t_n)  \rightharpoonup u_{t_0} \mbox{~weakly~in~} H.
\end{eqnarray}
Furthermore, for every fixed $\phi\in V$ and all $t, s \in  [\tau, \hat{t}+1]$,
\begin{eqnarray} \nonumber
\big(u^n(t)-u^n(s), \phi\big)&=&\int_s^tb(u^n(\xi),\phi,u^n(\xi))d\xi+\nu\int_s^t\langle u^n(\xi), A\phi\rangle d\xi+\int_s^t\langle f, \phi\rangle d\xi\\
&& +\int_s^t\int_{D} G(x,u^n(\xi))\phi(x)\zeta_{\delta}(\theta_s\omega_n) dx d\xi,
\end{eqnarray}
then, by using \eqref{unifest6},
\begin{eqnarray} \nonumber
\big|\big(u^n(t)-u^n(s), \phi\big)\big|&\leq&C \|\phi\|_{V}\Big(\int_s^t \|u^n(\xi)\|\|u^n(\xi)\|_{V}d\xi+\nu\int_s^t \| u^n(\xi)\|_{V}  d\xi+\int_s^t \| f\|_{V^*}   d\xi\\ \nonumber
&& +\int_s^t \big( \|u^n(\xi)\|^p\|\eta_1\|_{L^{\frac{2}{1-p}}} +\|\eta_2\|_{L^2}\big)d\xi\Big)\\
&\leq&C \|\phi\|_{V} |t-s|^{\frac{1}{2}}.
\end{eqnarray}
which means that the subsequence $\{u^n\}_{n=1}^\infty$  is equicontinuous on $ [\tau, \hat{t}+1]$ in $ V^*$.
 Combining the fact
 $$\|u^n\|_{C( [\tau, \hat{t}+1]; V^*)}\leq C \|u^n\|_{W^{1,\frac{4}{3}}( \tau, \hat{t}+1; V^*)}\leq C,$$
then by the Ascoli-Arzel\`{a} theorem, one obtains that, up to a subsequence
 \begin{eqnarray}\label{unifest777}
 u^n    \rightharpoonup   u  \mbox{ ~in~}  C( [\tau, \hat{t}+1]; V^*).
\end{eqnarray}
On the other hand, combining \eqref{unifest6} and \eqref{unifest8}, it is easy to get that, up to a subsequence
\begin{eqnarray}\label{comconver222}
&&u^n  \rightarrow u  \mbox{~stongly~in~} L^2\big(\tau, \hat{t}+1; H\big) \mbox{~and~a.e.~};\\ \label{comconver23}
&&u^n  \rightharpoonup  u \mbox {~weakly~}\ast \mbox {~in~} L^{\infty}(\tau,\hat{t}+1;H);\\ \label{comconver43}
&&u^n \rightharpoonup u \mbox{~weakly~in~} L^{2}(\tau,\hat{t}+1;V);\\ \label{comconver53}
&&\frac{\partial u^n}{\partial t} \rightharpoonup \frac{\partial u}{\partial t} \mbox{~weakly~in~} L^{\frac{4}{3}}(\tau,\hat{t}+1;V'),
\end{eqnarray}
 with $u=u(\cdot,\omega,u_{\tau})\in C([\tau,\hat{t}+1];H_w)\cap L^{\infty}(\tau,\hat{t}+1;H)\cap L^2(\tau,\hat{t}+1;V)$, which is a weak solution of  problem \eqref{NS-clor}-\eqref{NS-1clorb} with initial data $u_\tau$.
Therefore, by combining \eqref{unifest7}, \eqref{unifest777} and \eqref{comconver222}, one finishes the proof of \eqref{weauppes}.
\end{proof}

\subsection{$s-$asymptotic compactness and global random attractor}
By using the weak solutions of \eqref{NS-clor}-\eqref{NS-1clorb}, one can construct the following random  evolutionary system $\mathcal{E}^\theta_\omega$,
\begin{eqnarray}\nonumber
		\mathcal{E}^\theta_\omega([T,\infty))=&\big\{u(\cdot): u(\cdot)~\mbox{is~a~weak~solution~on}~[T,\infty) \mbox{~with~} \omega\in\Omega\mbox{~satisfying~} \eqref{energine}-\eqref{energinese}, \\ \label{rantro1}
		& ~u(t)\in \mathbb{Y}(\omega) ,~\forall ~t\in [T,\infty)\big\},~\forall ~T\in\mathbb{R}.
\end{eqnarray}
\begin{eqnarray}\nonumber
		\mathcal{E}^\theta_\omega((\infty,\infty))=&\big\{u(\cdot): u(\cdot)~\mbox{is~a~weak~solution~on~}(-\infty,\infty) \mbox{~with~} \omega\in\Omega \mbox{~satisfying~}
			\eqref{energine}-\eqref{energinese}, \\ \label{rantro2}
		&u(t)\in \mathbb{Y}(\omega) ,~\forall ~t\in (-\infty,\infty)\big\}.
	\end{eqnarray}
Furthermore,  a map can be defined by
$$ R(t, \omega):P(H)\rightarrow P(H),$$
$$ R(t, \omega)A:=\{u(t):u(0)\in A, u\in\mathcal{E}^{\theta}_{\omega}([0,\infty))\},A\in P(H).$$
Using Lemma \ref{measurable} and Proposition  \ref{sem-con}, one obtains that  $R(\cdot, \cdot)\cdot$ is $\mathcal{B}(\mathbb{R}^+)\times\mathcal{F}\times\mathcal{B}(X)$ measurable. Therefore,
 $\{ R(t, \omega)\}=\{ R(t,\omega):t\geq0,t\in \mathbb{R}^+,  \omega\in\Omega\}$ is a random evolutionary semigroup.
 
From Theorem \ref{t3.3} and Lemma \ref{l5.1}, since there exists a family of random absorbing set and the random evolutionary semigroup $\{R(t,\omega)\}$ is weak upper semicontinuous, the existence of weak global attractor can be achieved immediately.
\begin{theorem}\label{weakattr}
Given  $f\in V^*$, $\eta_1\in L^{\frac{2}{1-p}}(D)$, $\eta_2\in L^{2}(D)$, then the weak global attractor $\{\mathcal{A}_w(\omega)\}$  exists and satisfies  $\mathcal{A}_w(\omega)=\Xi(0)$, where
$$\Xi(0)=\{u(0)|u\in \mathcal{E}^{\theta}_{\omega}((-\infty,\infty))\}.$$
\end{theorem}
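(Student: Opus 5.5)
The plan is to check, for the random evolutionary system $\mathcal{E}^\theta_\omega$ built from Leray--Hopf weak solutions in \eqref{rantro1}--\eqref{rantro2}, the two hypotheses shared by Theorem \ref{t3.3} and Lemma \ref{l5.1}: a family of random absorbing sets, and weak upper semicontinuity of $\{R(t,\omega)\}$. Once both hold, the abstract theory gives the conclusion directly.

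\textbf{Step 1: the random evolutionary system.} First I will check that \eqref{rantro1}--\eqref{rantro2} satisfy conditions 1--4 of Definition \ref{evsys}.
\begin{enumerate}
\item[1.] Nonemptiness of $\mathcal{E}^\theta_\omega([T,\infty))$ follows from Theorem \ref{exithe}. It gives a weak solution with initial datum in $\mathbb{Y}(\omega)$ satisfying \eqref{energine}--\eqref{energinese}. By \eqref{energiabszui} this solution stays in the absorbing ball after a suitable choice of the constant $C$ in $R^2(\omega)$.
\item[2.] The shift property holds because a time translation by $s$ of a solution driven by $\zeta_\delta(\theta_t\omega)$ is a solution driven by $\zeta_\delta(\theta_{t+s}\omega)$.
\item[3.] Restriction to a subinterval is immediate.
\item[4.] The characterization of complete trajectories is immediate from the definition \eqref{rantro2}.
\end{enumerate}
Measurability of $R(\cdot,\cdot)\cdot$ comes from Lemma \ref{measurable} applied with $X=H$, which is separable, once weak upper semicontinuity is known.

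\textbf{Step 2: the two hypotheses.} The absorbing family is $\mathbb{Y}(\omega)$ from Lemma \ref{attrset1}. Bounded deterministic sets $D$ are pulled back into $\mathbb{Y}(\omega)$ after a time $t_0(D)$, since the term $e^{-\nu\lambda_1 t/2}\|u_0\|^2$ in \eqref{energiabszui} decays. Weak upper semicontinuity comes from Proposition \ref{sem-con}. Given $t_n\to t$, $\omega_n\to\omega$ and $u_{0,n}\rightharpoonup u_0$, any choice $z_n\in R(t_n,\omega_n)u_{0,n}$ has a subsequence converging weakly to $u(t,\omega,u_0)$ for some weak solution $u$. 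It remains to check that this limit lies in $R(t,\omega)u_0$, that is, that the limit trajectory again satisfies \eqref{energine}--\eqref{energinese} and stays in $\mathbb{Y}(\omega)$. I would obtain the energy inequality by passing to the limit in its integrated form, using the strong $L^2(H)$ convergence \eqref{comconver222}, weak lower semicontinuity of the $V$-norm, and \eqref{nonconv}, exactly as in Step 4 of Theorem \ref{exithe}. Membership in $\mathbb{Y}(\omega)$ then follows because the closed ball is weakly closed.

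\textbf{Step 3: applying the abstract results.} With both hypotheses in place, Theorem \ref{t3.3} shows that $\Omega_w(A,\omega)$ is nonempty, weakly compact, invariant, measurable and $d_w$-attracting. Taking $A=\mathbb{Y}$, which contains $u(t)$ for every trajectory, the theorem following Theorem \ref{t3.3} together with Theorems \ref{t3.1}--\ref{t3.2} yields $\mathcal{A}_w(\omega)=\Omega_w(\mathbb{Y},\omega)$. Here $d_w$ is the metric \eqref{dwh}; $H$ is not $d_w$-compact, but the bounded set $\mathbb{Y}(\omega)$ is, which is all the compactness argument in Theorem \ref{t3.3} uses. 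Finally, Lemma \ref{l5.1} gives $\mathcal{A}_w(\omega)=\Xi_\omega(0)=\Xi(0)$.

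\textbf{Main obstacle.} I expect the hardest part to be Step 2: showing that the weak limit trajectory genuinely belongs to $\mathcal{E}^\theta_\omega$ when $\omega_n\to\omega$. This needs uniform control of $\zeta_\delta(\theta_s\omega_n)$ on compact time intervals, which is why Proposition \ref{sem-con} restricts to $\Omega_m$. To pass to the full set $\Omega$ I would use $\Omega=\bigcup_m\Omega_m$: each $\omega$ lies in some $\Omega_m$, and I would work within that $\Omega_m$.
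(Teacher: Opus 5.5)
Your route is the paper's. Its proof is just the sentence before the theorem: absorption from Lemma~\ref{attrset1}, weak upper semicontinuity from Proposition~\ref{sem-con}, then Theorem~\ref{t3.3} (with Theorems~\ref{t3.1}--\ref{t3.2}) and Lemma~\ref{l5.1}. The paper never checks the axioms of Definition~\ref{evsys} for \eqref{rantro1}--\eqref{rantro2}. Nor does it check that the weak limit from Proposition~\ref{sem-con} lies in $\mathcal{E}^\theta_\omega$; that proposition only says the limit is \emph{some} weak solution. So you have found exactly the step the paper skips. Your remark that only the bounded set $\mathbb{Y}(\omega)$, not all of $H$, is $d_w$-compact is also a correct refinement.

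However, your way of closing that step does not work as written, for two reasons.

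First, the trajectories $u^n\in\mathcal{E}^\theta_{\omega_n}([0,\infty))$ that realize $z_n$ lie in $\mathbb{Y}(\omega_n)$, not in $\mathbb{Y}(\omega)$. Weak closedness of a ball therefore only gives $\|u(s)\|\le\liminf_n R(\omega_n)$. To land in $\mathbb{Y}(\omega)$ you need $\liminf_n R(\omega_n)\le R(\omega)$, i.e.\ some semicontinuity of $\omega\mapsto R(\omega)$ on $\Omega_m$, and Lemma~\ref{attrset1} provides none. With $\omega_n=\omega$, which is the case used in the invariance argument of Theorem~\ref{t3.2}, this problem disappears. But the joint version is what Lemma~\ref{measurable} needs.

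Second, Step~4 of Theorem~\ref{exithe} passes only the distributional inequality \eqref{energine} to the limit. The bound \eqref{energinese} is taken from the initial time and involves $\|u_{0,n}\|^2$. Under mere weak convergence $u_{0,n}\rightharpoonup u_0$ you only have $\liminf_n\|u_{0,n}\|^2\ge\|u_0\|^2$, which is the wrong direction. In Theorem~\ref{exithe} this was harmless because $P_mu_\tau\to u_\tau$ strongly. What does survive is the bound started from almost every $t_0$, where $u^n(t_0)\to u(t_0)$ strongly by \eqref{comconver222}.

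To make your Step~2 rigorous you would need two changes. Redefine $\mathcal{E}^\theta_\omega$ in \eqref{rantro1} so the energy inequality is required only from a.e.\ starting time, as in the deterministic evolutionary-system theory. And control the $\omega$-dependence of the radius. As it stands, this step rests, exactly as in the paper, on reading weak upper semicontinuity of $R(t,\omega)$ directly off Proposition~\ref{sem-con}.
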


In order to obtain the existence of strong global attractor,  strong asymptotic compactness of the random evolutionary semigroup must be achieved. Inspired by \cite{1, 2}, we use the following method to verify that $\{ R(t, \omega)\}$ is $s-$asymptotic compactness.
\begin{proposition}\label{precom}
 Suppose every complete trajectory $\mathcal{E}^{\theta}_{\omega}((-\infty,\infty))\in C((-\infty,\infty); H)$ and the stochastic evolutionary system $\mathcal{E}^{\theta}_{\omega}$ satisfies  the following three properties:
\begin{enumerate}
\item[A1] $\mathcal{E}^{\theta}_{\omega}([0,\infty))$ is a precompact set in $C([0,\infty); H_w)$;
\item[A2] Energy inequality: for all $ \epsilon>0$, there exists $\delta >0$ such that for every $u\in \mathcal{E}^{\theta}_{\omega}([0,\infty))$ and $t>0$,
$$
\|u(t)\|\leq \|u(t_0)\|+\epsilon, \mbox{~~~for~a.e.~} t_0\in (t-\delta, t);
$$
\item[A3] Strong convergence a.e.: if $u_k\in \mathcal{E}^{\theta}_{\omega}([0,\infty))$ is $d_{C([0,T]; H_w)}$-Cauchy sequence in $C([0,T]; H_w)$ for some $T>0$, then $\{u_k(t)\}$ is $d_s$-precompact sequence a.e. in $[0,T]$, where the strong distance $d_s(u,v)=\|u-v\|$ and weak distance $d_w$ is defined by \eqref{dwh}.
\end{enumerate}
Then $\{ R(t, \omega)\}$ is $s-$asymptotically compact.
\end{proposition}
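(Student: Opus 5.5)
The plan is to transplant the argument of Cheskidov--Foias \cite{1,2} for deterministic evolutionary systems to the pullback setting. Fix $\omega$ and take $s_k\to\infty$ and $x_k\in D$. We must show that $y_k:=R(s_k,\theta_{-s_k}\omega)x_k$ has a $d_s$-convergent subsequence. Choose $u_k\in\mathcal{E}^\theta_{\theta_{-s_k}\omega}([0,\infty))$ with $u_k(0)=x_k$ and $u_k(s_k)=y_k$. By property 2 of Definition \ref{evsys}, the shifted functions $v_k(t):=u_k(t+s_k)$ belong to $\mathcal{E}^\theta_\omega([-s_k,\infty))$ and satisfy $v_k(0)=y_k$.

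\textbf{Step 1 (weak limit).} Apply A1, together with the shift invariance and restriction property 3, on each window $[-T,\infty)$ with $T\in\mathbb N$. A diagonal extraction then gives a subsequence and a function $v$ with $v_k\to v$ in $C([-T,\infty);H_w)$ for every $T$. By property 4 we get $v\in\mathcal{E}^\theta_\omega((-\infty,\infty))$, and $y_k=v_k(0)\rightharpoonup v(0)$ weakly. By hypothesis $v\in C((-\infty,\infty);H)$, so $t\mapsto\|v(t)\|$ is continuous. In particular $\|v(0)\|\le\liminf\|y_k\|$.

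\textbf{Step 2 (norm upper bound).} Because $H$ is Hilbert, weak convergence plus $\limsup\|y_k\|\le\|v(0)\|$ gives strong convergence, so it suffices to prove this norm bound. Fix $\epsilon>0$ and let $\delta$ be the constant from A2. The sequence $\{v_k\}$ is $d_{C([-1,0];H_w)}$-Cauchy, since it converges there. After translating by $1$ (again using property 2), A3 says that $\{v_k(t)\}$ is $d_s$-precompact for a.e.\ $t\in[-1,0]$. Its only possible strong limit point is the weak limit $v(t)$, so $v_k(t)\to v(t)$ strongly for a.e.\ $t\in[-1,0]$. Now pick $t_0\in(-\delta,0)$ from the full-measure set where this strong convergence holds and where A2 applies to all $v_k$. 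A countable intersection of full-measure sets is still full measure, so such a $t_0$ exists. Choose it in addition with $|\|v(t_0)\|-\|v(0)\||<\epsilon$, which continuity of $\|v(\cdot)\|$ allows by taking $t_0$ close to $0$. Then A2 gives
\[
\|y_k\|=\|v_k(0)\|\le\|v_k(t_0)\|+\epsilon\longrightarrow\|v(t_0)\|+\epsilon\le\|v(0)\|+2\epsilon .
\]
Since $\epsilon$ is arbitrary, $\limsup\|y_k\|\le\|v(0)\|$, which yields $y_k\to v(0)$ in $H$ and hence $s$-asymptotic compactness.

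\textbf{Main obstacle.} The delicate point is applying A1--A3, which are stated for $\mathcal{E}^\theta_\omega([0,\infty))$ at a fixed $\omega$, to trajectories that live on the shifted intervals $[-s_k,\infty)$ for the pullback samples. I would first apply them to $t\mapsto v_k(t-T)\in\mathcal{E}^\theta_{\theta_{-T}\omega}([0,\infty))$, reading A1--A3 as holding for every sample, in particular $\theta_{-T}\omega$, with constants uniform in $k$. The uniformity of $\delta$ in A2 across all trajectories is precisely what lets us pass from the a.e.\ strong convergence of A3 to convergence at the specific time $0$.
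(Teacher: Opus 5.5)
Your proposal follows the paper's proof step for step, and it is correct to the same degree as the paper's. You pull back and shift the trajectories so the target time is fixed, use A1 with a diagonal extraction to get a weak limit that is a complete trajectory (hence strongly continuous by hypothesis), and then combine a.e.\ strong convergence from A3, the uniform $\delta$ of A2 and continuity of $\|v(\cdot)\|$ to obtain $\limsup\|y_k\|\le\|v(0)\|$. The paper does the same at $t^*=1$ on $[0,2]$ rather than at $0$ on $[-1,0]$, and your treatment of the countably many null sets and of the shifted samples $\theta_{-T}\omega$ is more careful than the paper's.

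One gap, shared with the paper, should be stated explicitly: concluding $v\in\mathcal{E}^\theta_\omega((-\infty,\infty))$ via property 4 requires each $\mathcal{E}^\theta_\omega([-T,\infty))$ to be closed in $C([-T,\infty);H_w)$. A1 as stated gives only precompactness, so without closedness the strong-continuity hypothesis need not apply to the diagonal limit.
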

\begin{proof}
Suppose the sequence $ u_n \in \mathcal{E}^{\theta}_{\omega}([0,\infty))$ satisfies $u_n \rightarrow u$ in $C([0,T]; H_w)$ as $n\rightarrow\infty$ for some $u\in \mathcal{E}^{\theta}_{\omega}([0,\infty))$. If $u(t)$ is strongly continuous at some $t = t^*\in (0,T)$, then $u_n(t^*) \rightarrow u(t^*)$ strongly in $H$.  In fact, by A3, there exists a set $I$ of measure zero and a subsequence (without relabel) satisfying that
$$
u_n(t) \rightarrow u(t) ~~\mbox{~strongly~in~} H \mbox{~on~} [0,T]\backslash I.
$$
Thanks to A2 and the strong continuity of $u$ at $t^*$, for every $\epsilon>0$, there exists $t_0\in [0, t^*)\backslash I$ satisfying
\begin{eqnarray}
\|u_n(t^*)\|\leq \|u_n(t_0)\|+\epsilon, \quad \|u(t_0)\|\leq \|u(t^*)\|+\epsilon, \mbox{~for~all~} n.
\end{eqnarray}
Then one has
\begin{eqnarray}
\|u(t^*)\|\leq \liminf\limits_{n\rightarrow \infty} \|u_n(t^*)\|\leq \limsup\limits_{n\rightarrow \infty} \|u_n(t^*)\|\leq \limsup\limits_{n\rightarrow \infty} \|u_n(t_0)\|+\epsilon\leq \|u(t^*)\|+2\epsilon,
\end{eqnarray}
which means that $u_n(t^*) \rightarrow u(t^*)$ strongly in $H$.

Next, let  $t_n\rightarrow\infty$ as $n\rightarrow\infty$ and  $x_n\in R(t_n,\theta_{-t_n}\omega)D$ for a bounded domain $D\subset H$, then  there exist $u_n\in \mathcal{E}^{\theta}_{\theta_{t_n} \omega}([-t_n,\infty))$ with $u_n(1)=x_n$.  By  A1 and a diagonalization process, there exits a subsequence $\{u_n\}$ (without relabel) and $u\in \mathcal{E}^{\theta}_{\omega}((-\infty,\infty))$ such that
$$
u_n \rightarrow u \mbox{~in~} C([0,2];   H_{w}).
$$
Since $u$ is continuous at $t^*=1$, then $x_n\rightarrow x$ in $H$ strongly, which finishes the proof.
\end{proof}

Next, we show that the random  evolutionary system $\mathcal{E}^\theta_\omega$ satisfies properties A1-A3.
\begin{lemma}\label{A1}
The random  evolutionary system $\mathcal{E}^\theta_\omega$ satisfies A1.
\end{lemma}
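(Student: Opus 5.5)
To verify A1, we show that every sequence $\{u_n\}\subset\mathcal{E}^{\theta}_{\omega}([0,\infty))$ has a subsequence converging in $C([0,\infty);H_w)$. Since that metric is the weighted sum over $T\in\mathbb{N}$ of uniform $d_w$-distances on $[0,T]$, it suffices to extract, for each fixed $T$, a subsequence converging in $C([0,T];H_w)$. A Cantor diagonal argument over $T=1,2,\dots$ then gives convergence locally uniformly on $[0,\infty)$, which is convergence in $C([0,\infty);H_w)$.

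\textbf{Step 1: uniform bounds.} By the definition \eqref{rantro1}, every $u\in\mathcal{E}^\theta_\omega([0,\infty))$ takes values in the absorbing ball $\mathbb{Y}(\omega)$. Hence
\[
\sup_n\sup_{t\ge0}\|u_n(t)\|\le R(\omega).
\]
We then integrate the energy inequality \eqref{energine} over $[0,T]$ and control the noise term as in \eqref{enr12}, using $|\zeta_\delta(\theta_t\omega)|\le M$ from \eqref{omeest}. This gives $\int_0^T\|u_n\|_V^2\,ds\le C(T,\omega)$ uniformly in $n$.

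\textbf{Step 2: equicontinuity in $V^*$.} We use the weak formulation with a test function $\phi\in V$, exactly as in the proof of Proposition \ref{sem-con}. The trilinear term is bounded by $\|u_n\|\,\|u_n\|_V\|\phi\|_V$, and the linear, forcing and noise terms are controlled using \eqref{noncond}. Cauchy--Schwarz then gives
\[
|(u_n(t)-u_n(s),\phi)|\le C\|\phi\|_V|t-s|^{1/2},\qquad t,s\in[0,T].
\]
Because the trajectories lie in a bounded ball of $H$ and $V$ is dense in $H$, this yields equicontinuity of each coordinate $t\mapsto(u_n(t),w_j)$, taking $\phi=w_j\in\mathfrak{D}(A)\subset V$.

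\textbf{Step 3: Arzelà--Ascoli in the weak metric.} On the closed ball of radius $R(\omega)$, the metric $d_w$ of \eqref{dwh} metrizes the weak topology, so this ball is a compact metric space. For fixed $j$, Step 2 gives $|u_{n,j}(t)-u_{n,j}(s)|\le C\lambda_j^{1/2}|t-s|^{1/2}$. Given $\varepsilon>0$, we choose $J$ with $\sum_{j>J}2^{-j}<\varepsilon/2$; the first $J$ terms of $d_w$ are then uniformly equicontinuous. Thus $\{u_n\}$ is $d_w$-equicontinuous on $[0,T]$ with values in a compact set. The Arzelà--Ascoli theorem gives a subsequence converging in $C([0,T];H_w)$ to some $u\in C([0,T];H_w)$.

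\textbf{Step 4: the limit is a trajectory (main obstacle).} The statement of A1 only asks for precompactness, so Steps 1--3 suffice. However, Proposition \ref{precom} uses that the limit lies in $\mathcal{E}^\theta_\omega$, so we also check this.
\begin{enumerate}
\item[(i)] Using the Aubin--Lions compactness from Step 1 and \eqref{unifrom2}, we pass to the limit in the weak formulation, as in Step 3 of Theorem \ref{exithe}. This uses strong $L^2(0,T;H)$ convergence for the trilinear term and the argument of \eqref{nonconv} for $G$.
\item[(ii)] We recover the energy inequality by lower semicontinuity, as in \eqref{estmeninf}.
\item[(iii)] The limit stays in $\mathbb{Y}(\omega)$ because the ball is weakly closed.
\end{enumerate}
The delicate point is (ii). Weak convergence in $L^2(V)$ together with strong convergence in $L^2(H)$ handles the left-hand side. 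For the noise term $\zeta_\delta(G(x,u_n),u_n)$ we need a.e. convergence plus the uniform integrability given by $p<1$, so that Vitali's theorem applies. I expect this to be the step requiring the most care.
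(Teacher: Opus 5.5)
Your proposal is correct and follows essentially the paper's route: uniform bounds, equicontinuity of the pairings $(u_n(t),w_j)$ from the weak formulation as in Proposition~\ref{sem-con}, Arzel\`a--Ascoli on each $[0,T]$ in the weak metric, and a Cantor diagonal over $T\in\mathbb{N}$; your Step 4 goes beyond what A1 requires, and the paper also proves only precompactness. Two small corrections that do not affect the argument: in 3D one only has $|b(u_n,\phi,u_n)|\le C\|u_n\|^{1/2}\|u_n\|_V^{3/2}\|\phi\|_V$ rather than the 2D bound $\|u_n\|\,\|u_n\|_V\|\phi\|_V$, so the H\"older exponent in Step 2 is $1/4$ rather than $1/2$; and on $[0,T]$ you only need pathwise continuity of $t\mapsto\zeta_\delta(\theta_t\omega)$, not a bound for all $t\in\mathbb{R}$ as in \eqref{omeest}.
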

\begin{proof}
	  Let $\{u_{n}\}_{n=1}^\infty$ be a sequence of weak solutions to \eqref{NS-clor} with  $\omega\in\Omega$.  Similar to the proof Proposition \ref{sem-con},  one can  prove that there exists a subsequence  $\{u_{n}\}_{n=1}^\infty$  (without relabeling) converges to some $u$ in $C([t_1,t_2];  H_{w})$ for all $t_2>t_1$, i.e.
	\begin{eqnarray}\label{uniformcon}
		\langle u_n,\varphi \rangle\rightarrow \langle u, \varphi\rangle  \mbox{~uniformly~on~} [t_1,t_2], \mbox{~as~} n\rightarrow\infty,
	\end{eqnarray}
	for all $\varphi\in H$. From the definition of	weak solution, we know that $ \mathcal{E}^\theta_\omega([0,\infty))\subset C([0,\infty); H_{w}) $. Taking any sequence $\{u_{n}\}_{n=1}^{\infty} \in \mathcal{E}^\theta_\omega([0,\infty))$, by \eqref{uniformcon},  there exists a subsequence $\{u^{(1)}_{n}\}_{n=1}^\infty\subset\{u_{n}\}_{n=1}^{\infty}$  such that
	$$ (u^{(1)}_{n},\mathcal{B}^{(1)}_{n})\rightarrow (u^{(1)},\mathcal{B}^{(1)}) ~in~C([0,1];   H_{w})~~~~~as~n\to\infty, $$
	for some $ u^{(1)}\in  C([0,1];  H_{w}) $.
	Next, in the same process, there exists a subsequence $\{u^{(2)}_{n}\}_{n=1}^\infty\subset\{u^{(1)}_{n}\}_{n=1}^\infty$  such that
$$ u^{(2)}_{n}\rightarrow u^{(2)} ~in~C([0,2];  H_{w})~~~~~as~n\to\infty, $$
for some $ u^{(2)}\in  C([0,2]; H_{w}) $ with $  u^{(2)}=u^{(1)}$ on $ [0,1] $.

Last, repeating this diagonalization process, we can get a pair subsequence  $\{u^{(j+1)}_{n}\}_{n=1}^\infty\subset\{u^{(j)}_{n}\}_{n=1}^\infty$,  $j\in \mathbb{N}$,  satisfying that
$$ u^{(j+1)}_{n} \rightarrow u^{(j+1)} \mbox{~in~} C([0,j+1]; H_{w} )~~~~as~n\to \infty ,$$
for some $ u^{(j+1)}\in C([0,j+1];  H_{w})$ with $  u^{(j+1)}=u^{(j)}$ on $ [0,j] $.
Therefore, one obtains that a subsequence of $\{u_{n}\}_{n=1}^{\infty} \in \mathcal{E}^\theta_\omega([0,\infty))$  converges to some $u$ in $C([0,\infty);  H_{w})$,
which finishes the proof.
\end{proof}
\begin{lemma}\label{A2}
Given  $f\in V^*$, $\eta_1\in L^{\frac{2}{1-p}}(D)$, $\eta_2\in L^{2}(D)$, then the random  evolutionary system $\mathcal{E}^\theta_\omega$ satisfies A2, i.e.   for all $\varepsilon >0$, there exists $\delta>0$ such that
	$$ \|u(t)\|^2 \le\|u(t_{0})\|^2 + \varepsilon ~~~\mbox{~for~a.~e.~}t_{0} \in (t-\delta,t),$$
for every pair $ u \in\mathcal{E}([0,\infty))$ and $ t>0$.
\end{lemma}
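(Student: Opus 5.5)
The plan is to integrate the energy inequality \eqref{energine} over $[t_0,t]$ and bound the right-hand side uniformly along trajectories in $\mathcal{E}^\theta_\omega$. The key input is that every $u\in\mathcal{E}^\theta_\omega([0,\infty))$ takes values in the absorbing ball $\mathbb{Y}(\omega)$ from Lemma \ref{attrset1}, so $\|u(s)\|\le R(\omega)$ for all $s$. The energy inequality holds in the distributional sense, and the Leray--Hopf framework turns this into the integrated form
\begin{eqnarray*}
\|u(t)\|^2+2\nu\int_{t_0}^t\|u(s)\|_V^2ds\le \|u(t_0)\|^2+2\int_{t_0}^t\langle f,u(s)\rangle ds+2\int_{t_0}^t\zeta_\delta(\theta_s\omega)\big(G(x,u(s)),u(s)\big)ds .
\end{eqnarray*}
This form is valid for all $t$ and for a.e.\ $t_0<t$, namely for the Lebesgue points of $\|u(\cdot)\|^2$. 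To obtain it, I would test \eqref{energine} against a sequence of smooth cut-offs $\theta_k$ that approximate $\chi_{[t_0,t]}$. At the left endpoint I pass to the limit using Lebesgue points. At the right endpoint I use weak continuity $u\in C([0,T];H_w)$ together with lower semicontinuity of the norm. This is the usual step, and I expect it to be the main technical point, since the inequality is only available in the distributional sense.

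Next I bound the right-hand side by a quantity that depends only on $t-t_0$ and not on $u$. For the forcing term, Young's inequality gives $2\langle f,u\rangle\le 2\nu\|u\|_V^2+\frac1{2\nu}\|f\|_{V^*}^2$. The $\|u\|_V^2$ contribution is absorbed into the left-hand side, leaving $\frac{1}{2\nu}\|f\|_{V^*}^2(t-t_0)$.

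For the noise term I use \eqref{noncond} with Hölder's inequality:
\[
|(G(x,u),u)|\le \|\eta_1\|_{L^{\frac2{1-p}}}\|u\|^{1+p}+\|\eta_2\|_{L^2}\|u\|\le \|\eta_1\|_{L^{\frac2{1-p}}}R(\omega)^{1+p}+\|\eta_2\|_{L^2}R(\omega).
\]
This uses the absorbing bound. Combined with the bound $|\zeta_\delta(\theta_s\omega)|\le M$ from \eqref{omeest}, the noise term is at most $C(\omega)(t-t_0)$. If one prefers not to rely on a global bound for $\zeta_\delta$, it suffices to use a bound on $\sup_{s\in[0,T]}|\zeta_\delta(\theta_s\omega)|$ for $t\le T$, which comes from pathwise continuity.

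Combining these estimates gives
\[
\|u(t)\|^2\le \|u(t_0)\|^2+K(\omega)(t-t_0),\qquad K(\omega)=\frac1{2\nu}\|f\|_{V^*}^2+2M\big(\|\eta_1\|_{L^{\frac2{1-p}}}R^{1+p}+\|\eta_2\|_{L^2}R\big).
\]
The constant $K(\omega)$ is independent of $u$ and of $t$. Choosing $\delta=\varepsilon/K(\omega)$ then yields $\|u(t)\|^2\le\|u(t_0)\|^2+\varepsilon$ for a.e.\ $t_0\in(t-\delta,t)$, uniformly over $u\in\mathcal{E}^\theta_\omega([0,\infty))$ and $t>0$.

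Finally, A2 in Proposition \ref{precom} is stated for the norms rather than their squares. It follows from the squared version via $\|u(t)\|\le\sqrt{\|u(t_0)\|^2+\varepsilon}\le\|u(t_0)\|+\sqrt{\varepsilon}$, after replacing $\varepsilon$ by $\varepsilon^2$.
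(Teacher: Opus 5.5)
Your argument is essentially the paper's. Both integrate the energy inequality from a.e.\ $t_0$ and bound the right-hand side by a constant times $t-t_0$, using the global bound \eqref{omeest}. The only variation is how $(G(x,u),u)$ is controlled. You use the constraint $u(t)\in\mathbb{Y}(\omega)$ built into \eqref{rantro1}. The paper instead absorbs the $u$-dependent terms, via Poincar\'e and Young, into the Gronwall form of \eqref{energinese} restarted at $t_0$, and then drops the exponential weights. Your derivation of the a.e.-$t_0$ integrated inequality from \eqref{energine}, and your passage from squared to unsquared norms, fill in steps the paper leaves implicit.

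One caution concerns the step you share with the paper. The bound \eqref{omeest} does not follow from the sublinear growth \eqref{omegare}, which only gives $|\zeta_\delta(\theta_t\omega)|\le C_\epsilon(\omega)+\epsilon|t|$. In fact $\zeta_\delta(\theta_t\omega)$ is a stationary Ornstein--Uhlenbeck process with nondegenerate Gaussian marginals, so by ergodicity of $\theta_t$ it is a.s.\ unbounded on $\mathbb{R}$. Your fallback via $\sup_{s\in[0,T]}|\zeta_\delta(\theta_s\omega)|$ avoids this step. Contrary to your ``it suffices'', however, it only yields $\delta=\delta(\varepsilon,T,\omega)$ valid for $t\le T$, not one $\delta$ for all $t>0$ as the lemma asserts. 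In the proof of Proposition \ref{precom}, A2 is only invoked at a single time $t^*$, so a $T$-dependent $\delta$ may be enough there. It is still not the statement as written.
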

\begin{proof}
	For all $\tilde{\varepsilon}>0$, there exists $\delta >0$ such that for all $t_{0} \in (t-\delta,t)$,
	\begin{eqnarray}\nonumber\label{3.14}
			&&\sup_{t\in\mathbb{R}}\int_{t_{0}}^{t}\big[\|f\|^2_{V^{*}}+\|\eta_1\|^{\frac{2}{1-p}}_{L^{\frac{2}{1-p}} }+\|\eta_2\|^2_{L^2}\big]ds\\
&\leq&\sup_{t\in\mathbb{R}} \int_{t-\delta}^{t}\big[\|f\|^2_{V^{*}}+\|\eta_1\|^{\frac{2}{1-p}}_{L^{\frac{2}{1-p}} }+\|\eta_2\|^2_{L^2}\big]ds<\tilde{\varepsilon}.
	\end{eqnarray}
Then taking $ u\in\mathcal{E}^\theta_\omega([0,\infty))$ and $t>0$, by Theorem \ref{exithe},	for a. e.  $0\leq t_0\leq t$,
	\begin{eqnarray}\nonumber
			\|u(t)\|^2 &\leq& e^ {-\frac{\nu\lambda_1}{2}t}\Big(e^ {\frac{\nu\lambda_1}{2}{t_0}}\|u(t_0)\|^2 +C \int_{t_0}^t e^ {\frac{\nu\lambda_1}{2}s}\big(\|f\|^2_{V^\ast}+  |\zeta_{\delta}(\theta_s\omega)|^{\frac{2}{1-p}}\|\eta_1\|^{\frac{2}{1-p}}_{L^{\frac{2}{1-p}} }\\
&&+  |\zeta_{\delta}(\theta_s\omega)|^{2}\|\eta_2\|^2_{L^2} \big) ds  \Big),
	\end{eqnarray}
Then by combining \eqref{omeest} and \eqref{3.14},  one has
	\begin{equation*}
			\|u(t)\|^2\le \|u(t_0)\|^{2}+C\int_{t_{0}}^{t}\big[\|f\|^2_{V^{*}}+\|\eta_1\|^{\frac{2}{1-p}}_{L^{\frac{2}{1-p}} }+\|\eta_2\|^2_{L^2}\big]ds\leq\|u_{0}\|^{2}+\varepsilon,
	\end{equation*}
	for a.~e. $t_{0} \in (t-\delta,t)$, which finishes the proof.
\end{proof}

\begin{lemma}\label{A3}
Given  $f\in V^*$, $\eta_1\in L^{\frac{2}{1-p}}(D)$, $\eta_2\in L^{2}(D)$, then the random  evolutionary system $\mathcal{E}^\theta_\omega$ satisfies A3, i.e. if $ \{u_{n}\}_{n=1}^\infty,~u \in \mathcal{E}^\theta_\omega([0,\infty))$ satisfying that  $u_{n}  \to u $  in $C([0,T];H_{w})$ for some $ T>0 $ as $n\rightarrow\infty$. Then there exists a subsequence $ u_{n_k}(t) \to  u(t) $ in $  H $ a. e. on  $ [0,T] $ as $n_k\rightarrow \infty$.
\end{lemma}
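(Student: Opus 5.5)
The plan is the standard Aubin--Lions route, following the argument already used in Proposition \ref{sem-con} and in Step 3 of Theorem \ref{exithe}. Let $u_n, u\in \mathcal{E}^\theta_\omega([0,\infty))$ with $u_n\to u$ in $C([0,T];H_w)$. The first step is to obtain bounds on $u_n$ that are uniform in $n$:
\begin{eqnarray*}
\sup_{n}\Big(\sup_{t\in[0,T]}\|u_n(t)\|^2+\int_0^T\|u_n(s)\|_V^2ds\Big)\leq C,\qquad \sup_n\Big\|\frac{\partial u_n}{\partial t}\Big\|_{L^{\frac43}(0,T;V^*)}\leq C .
\end{eqnarray*}
\begin{itemize}
\item The $L^\infty(0,T;H)$ bound is immediate, since by definition every trajectory satisfies $u_n(t)\in\mathbb{Y}(\omega)$, a ball of radius $R(\omega)$.
\item The $L^2(0,T;V)$ bound comes from integrating the energy inequality \eqref{energine} over $[0,T]$. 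The forcing term is handled by Young's inequality, and the noise term by \eqref{noncond} together with the pathwise bound \eqref{omeest} on $|\zeta_\delta(\theta_t\omega)|$, exactly as in \eqref{unifest5}.
\item The bound on the time derivative is obtained as in \eqref{unifrom2}. One uses $\|B(u_n,u_n)\|_{V^*}\leq C\|u_n\|^{1/2}\|u_n\|_V^{3/2}$ in 3D, together with \eqref{unimf33} for the noise term.
\end{itemize}

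The second step applies the Aubin--Lions lemma to the space $W=\{v\in L^2(0,T;V): v_t\in L^{\frac43}(0,T;V^*)\}$, which embeds compactly into $L^2(0,T;H)$. This yields a subsequence $u_{n_k}\to \tilde u$ strongly in $L^2(0,T;H)$. To identify the limit, note that strong $L^2(0,T;H)$ convergence implies weak convergence in $L^2(0,T;H)$. On the other hand, $u_{n_k}\to u$ in $C([0,T];H_w)$ gives $(u_{n_k}(t),\varphi)\to(u(t),\varphi)$ for every $t$ and every $\varphi\in H$. Combined with the uniform bound, this gives weak convergence to $u$ in $L^2(0,T;H)$. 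Hence $\tilde u=u$.

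The final step uses the standard fact that strong $L^2(0,T;H)$ convergence, i.e. $\int_0^T\|u_{n_k}(t)-u(t)\|^2dt\to0$, implies pointwise a.e. convergence in $H$ along a further subsequence. This gives $u_{n_k}(t)\to u(t)$ in $H$ for a.e. $t\in[0,T]$, which is A3.

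The step I expect to be the main obstacle is the uniformity of the constants. These must depend only on $\omega$, $T$ and the data, not on $n$ or on the initial values. This is where the absorbing-ball constraint built into \eqref{rantro1} and the bound \eqref{omeest} are essential. If \eqref{omeest} is only local in time, it suffices to use $\sup_{s\in[0,T]}|\zeta_\delta(\theta_s\omega)|<\infty$, which follows from pathwise continuity. Everything else is routine.
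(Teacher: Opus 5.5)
Your proposal is correct and follows the same route as the paper. The paper also obtains uniform bounds, applies the Aubin--Lions compactness of $W$ in $L^2(0,T;H)$ to get $\int_0^T\|u_{n_k}(s)-u(s)\|^2ds\to0$, and extracts an a.e.\ convergent subsequence. You additionally make explicit two points the paper leaves implicit: identifying the strong $L^2(0,T;H)$ limit with $u$, and replacing the global bound \eqref{omeest} by the local bound $\sup_{s\in[0,T]}|\zeta_\delta(\theta_s\omega)|<\infty$, which is all this lemma needs.
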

\begin{proof}
	Taking a sequence $u_{n} \subset \mathcal{E}^\theta_\omega ([0,\infty))$ satisfying $u_{n}\to u$ in $ C([0,T]; H_{w} ) $ as $ n\to\infty $ for some $u\in C([0,T]; H_{w})$.
	Since $u_{n}$ is a pair of weak solutions of equation \eqref{NS-clor},
similar with  \eqref{comconver1} in Theorem \ref{exithe}, by Aubin-Lions lemma,  one can choose a subsequence
	$$ \int_{0}^{T} \|u_{n_k}(s)-u(s)\|^2 ds \to 0 ~~~~\mbox{as}~n_k\to \infty.$$
	Specially, one has
	\begin{equation}\label{4.33}
		\|u_{n_k} (t)-u(t)\|\rightarrow 0~ ~ \mbox{as}~  n_k\to\infty ~\mbox{a. e. on}~ [0,T],
	\end{equation}
	which finishes the proof.
\end{proof}

\begin{theorem}\label{th2.4}
For given $f, \eta_1, \eta_2$ are in $V^{*}$, $ L^{\frac{2}{1-p}}(D) $ and $  L^{2}(D) $ respectively, suppose every complete trajectory $\mathcal{E}^{\theta}_{\omega}((-\infty,\infty))\in C((-\infty,\infty); H)$.   Then the strong global attractor $\{\mathcal{A}_s(\omega)\}$ exists and $\{\mathcal{A}_s(\omega)\}=\{\mathcal{A}_w(\omega)\}, \omega\in \Omega$.
\end{theorem}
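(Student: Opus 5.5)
The plan is to apply the abstract existence result, Theorem \ref{t4.1}, to the random evolutionary semigroup $\{R(t,\omega)\}$ generated by $\mathcal{E}^\theta_\omega$ in \eqref{rantro1}--\eqref{rantro2}. Theorem \ref{t4.1} has three hypotheses, and the proof will check them in turn.

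\textbf{Absorbing set.} A family of random absorbing sets exists by Lemma \ref{attrset1}, namely the balls $\mathbb{Y}(\omega)$ of radius $R(\omega)$ in $H$.

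\textbf{Weak upper semicontinuity.} This follows from Proposition \ref{sem-con}. I will restrict to the $\theta$-invariant full-measure set of $\omega$ given by \eqref{omegare} and use the sets $\Omega_m$ so that $\zeta_\delta(\theta_s\omega_n)$ is locally uniformly bounded. Under this restriction, any weakly convergent sequence of initial data, times and samples has a subsequence of outputs converging weakly to a value of some limiting weak solution. That limit solution again satisfies \eqref{energine}--\eqref{energinese}, because the energy inequality passes to the limit by weak lower semicontinuity exactly as in Step 4 of Theorem \ref{exithe}. Lemma \ref{measurable} then also gives measurability, so $\{R(t,\omega)\}$ is a random evolutionary semigroup. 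At this point Theorem \ref{weakattr} already yields $\{\mathcal{A}_w(\omega)\}$.

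\textbf{Strong asymptotic compactness.} This is the substantive hypothesis. I will invoke Proposition \ref{precom}, whose standing assumption, that complete trajectories lie in $C((-\infty,\infty);H)$, is exactly the extra hypothesis of the present theorem. Conditions A1, A2 and A3 are supplied by Lemmas \ref{A1}, \ref{A2} and \ref{A3} respectively. The argument then runs as follows.
\begin{enumerate}
\item[(i)] Take $x_n\in R(t_n,\theta_{-t_n}\omega)D$ with $t_n\to\infty$, and shift time so the trajectories are defined on $[-t_n,\infty)$.
\item[(ii)] By A1 and a diagonal argument, extract a subsequence converging in $C([-T,T];H_w)$ for every $T$ to a complete trajectory $u$. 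This $u$ belongs to $\mathcal{E}^\theta_\omega((-\infty,\infty))$ because the limit is a weak solution satisfying the energy inequalities and remains in $\mathbb{Y}$, which is weakly closed.
\item[(iii)] Since $u$ is strongly continuous at the observation time, A2 and A3 upgrade the weak convergence to strong convergence: take $t_0$ slightly before that time in the a.e.\ set where A3 gives strong convergence, and compare norms via A2.
\end{enumerate}
Hence $\{x_n\}$ converges strongly, which is $s$-asymptotic compactness.

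With all hypotheses verified, Theorem \ref{t4.1} gives existence of $\{\mathcal{A}_s(\omega)\}$ together with $\mathcal{A}_s(\omega)=\mathcal{A}_w(\omega)$. Lemma \ref{l4.1} confirms $\Omega_s=\Omega_w$ for the absorbing family, and the identification $\mathcal{A}_s(\omega)=\Xi_\omega(0)$ follows from the corollary to Lemma \ref{l5.1}.

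The main obstacle is step (iii), specifically the uniformity in A2. The constant $\delta$ must not depend on the trajectory or on the shifted sample $\theta_{-t_n}\omega$. I will handle this by using the bound \eqref{omeest} on $\zeta_\delta$, which for fixed $\omega$ holds uniformly in $t$, so the integral in Lemma \ref{A2} over $(t_0,t)$ is at most $C\delta$ with $C$ independent of $n$. A second point is that A2 is stated in squared norms, but squared-norm control implies the unsquared version, since the norms are bounded by $R(\omega)$. If the pullback shift makes the noise bound fail uniformly, I would instead work with trajectories of the fixed-sample system $\mathcal{E}^\theta_\omega$ via property 2 of Definition \ref{evsys}, so that only one $\omega$ appears throughout.
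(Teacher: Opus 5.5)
Your proposal takes the same route as the paper's proof. It applies Theorem \ref{t4.1} using the absorbing family of Lemma \ref{attrset1}, weak upper semicontinuity from Proposition \ref{sem-con}, and $s$-asymptotic compactness from Proposition \ref{precom} via Lemmas \ref{A1}--\ref{A3}, with the strong-continuity hypothesis on complete trajectories entering exactly there.

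One caution on the step you flagged. The global bound \eqref{omeest} that you (like the paper) invoke is not actually true, because the stationary Ornstein--Uhlenbeck process $\zeta_\delta(\theta_t\omega)$ is almost surely unbounded on $\mathbb{R}$. Use your fallback instead: after the pullback shift only the fixed sample $\omega$ appears. A2 then needs only a bound on $|\zeta_\delta(\theta_s\omega)|$ over a compact window around the observation time, which continuity gives. It also needs the uniform-in-$n$ bound on the trajectories from the tempered absorbing estimate, which follows from \eqref{omegare}.
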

\begin{proof}
 Since the evolutionary system $\mathcal{E}^\theta_\omega$ satisfies the results of Lemma \ref{A1}-Lemma \ref{A3}, from Proposition \ref{precom}, one knows that   $\mathcal{E}^\theta_\omega$ is $s-$asymptotically compact. Finally, by using  Theorem \ref{t4.1} and combining Lemma \ref{attrset1},  Proposition  \ref{sem-con},  the  main result can be achieved.
\end{proof}

\begin{remark}
It is interesting use our established results in this paper to other models like 3D Euler equation (\cite{15}), 3D Navier-Stokes equations with space-time white noise (\cite{19}) or to analyze other types solution of 3D Navier-Stokes equations (\cite{16,17,18}).
\end{remark}

\subsection{Tracking property}
In this subsection, the weak and strong tracking properties of random evolutionary system  is showed.
\begin{theorem}(Weak Tracking Property)
Let $\mathcal{E}^{\theta}_{\omega}([T,\infty)),T\in \mathbb{R}^+$ be a random evolutionary system constructed by \eqref{rantro1}-\eqref{rantro2}. Let $A\subset X$. Then for any $\varepsilon>0$, there exists $t_0\geq0$, such that for any $t^*>t_0$, every random trajectory $u\in\mathcal{E}^{\theta}_{\omega}([t^*,\infty))$ with $u(t^*)\in A$ satisfies
$$d_{C([t^*,\infty);X_w)}(u,v)<\varepsilon,$$
for some complete trajectory $v\in \mathcal{E}^{\theta}_{\omega}((-\infty,\infty))$ with $v(t)\in \Omega_w(A,\omega)$ for all $t\in \mathbb{R}, \omega\in\Omega$.
\end{theorem}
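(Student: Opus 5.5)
The plan is to reduce this statement to the abstract Weak Tracking Property, Theorem \ref{t5.3}, applied with $X=H$ and $d_w$ the weak metric \eqref{dwh}. Since that theorem needs only a random evolutionary system together with compactness of $\mathcal{E}^{\theta}_{\omega}([T,\infty))$ in $C([T,\infty);H_w)$, the work consists of checking these two hypotheses for the system built in \eqref{rantro1}--\eqref{rantro2}, and then quoting the conclusion.

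\textbf{Step 1: the axioms of Definition \ref{evsys}.} I would verify these one at a time.
\begin{enumerate}
\item[(i)] Nonemptiness of $\mathcal{E}^{\theta}_{\omega}([T,\infty))$. This follows from Theorem \ref{exithe}: take an initial value in $\mathbb{Y}(\omega)$. The absorbing estimate of Lemma \ref{attrset1} then keeps the resulting solution in the ball $\mathbb{Y}(\omega)$.
\item[(ii)] The translation property $\mathcal{E}^{\theta}_{\omega}(I+s)=\{u(\cdot):u(\cdot-s)\in\mathcal{E}^{\theta}_{\theta_s\omega}(I)\}$. This comes from the cocycle identity $\zeta_\delta(\theta_{t}\theta_s\omega)=\zeta_\delta(\theta_{t+s}\omega)$. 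Substituting it into the weak formulation and into \eqref{energine}--\eqref{energinese} shows that a shifted weak solution solves the equation with shifted sample.
\item[(iii)] Restriction and (iv) the characterization of complete trajectories. Both are immediate, because being a weak solution satisfying the energy inequalities and lying in $\mathbb{Y}(\omega)$ is a local-in-time property.
\item[(v)] Measurability. This is supplied by Lemma \ref{measurable} together with Proposition \ref{sem-con}, as already observed before Theorem \ref{weakattr}.
\end{enumerate}

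\textbf{Step 2: compactness in $C([T,\infty);H_w)$.} Lemma \ref{A1}, with the time origin shifted from $0$ to $T$, gives precompactness. To upgrade this to compactness, I must show the set is closed.

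Take $u_n\in\mathcal{E}^{\theta}_{\omega}([T,\infty))$ with $u_n\to u$ in $C([T,\infty);H_w)$. On every $[T,T']$ the bounds \eqref{unifest6} and \eqref{unifest8} hold uniformly in $n$, since all the $u_n$ lie in $\mathbb{Y}(\omega)$ and the noise is bounded via \eqref{omeest}. Aubin--Lions then yields the convergences \eqref{comconver222}--\eqref{comconver53}, and passing to the limit exactly as in Step 3 of Theorem \ref{exithe} shows that $u$ is a weak solution.

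\textbf{Step 3: the energy inequality and the absorbing ball for the limit.} This is the main obstacle. I would handle it as in Step 4 of Theorem \ref{exithe}.
\begin{enumerate}
\item[(i)] Test the energy inequality for $u_n$ against a nonnegative $\theta\in C_0^\infty$.
\item[(ii)] Use the strong $L^2(T,T';H)$ convergence on the $\|u_n\|^2$ term.
\item[(iii)] Use weak lower semicontinuity of $\int\theta\|u_n\|_V^2$.
\item[(iv)] Use the weak-$\ast$ convergence \eqref{nonconv}, combined with the strong convergence of $u_n$, to pass to the limit in $\zeta_\delta(G(x,u_n),u_n)$.
\end{enumerate}
This gives \eqref{energine} for $u$. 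The integrated estimate \eqref{energinese} then follows by Gronwall, as in Step 5 of Theorem \ref{exithe}. Finally, $\mathbb{Y}(\omega)$ is a closed ball and hence weakly closed, so $u(t)\in\mathbb{Y}(\omega)$ for every $t$. This proves $u\in\mathcal{E}^{\theta}_{\omega}([T,\infty))$, so the set is closed and therefore compact.

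\textbf{Step 4: conclusion.} With both hypotheses verified, Theorem \ref{t5.3} applies directly. For any $\varepsilon>0$ it produces $t_0$ such that every trajectory starting in $A$ at a time $t^*>t_0$ is $\varepsilon$-close, in $d_{C([t^*,\infty);H_w)}$, to a complete trajectory $v$ with values in $\Omega_w(A,\omega)$. By Theorem \ref{weakattr}, $\Omega_w(A,\omega)$ coincides with $\mathcal{A}_w(\omega)$, and this is exactly the claimed statement.
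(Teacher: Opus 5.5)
Your route is the paper's route. The paper's proof says only that Lemma \ref{A1} makes $\mathcal{E}^{\theta}_{\omega}([T,\infty))$ compact in $C([T,\infty);H_w)$ and then applies Theorem \ref{t5.3}. You are right that Lemma \ref{A1} gives only precompactness, so closedness must be checked; the paper skips this.

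\textbf{The gap.} Your closedness argument does not go through where you say that \eqref{energinese} follows for the limit $u$ ``by Gronwall, as in Step 5 of Theorem \ref{exithe}''. In \eqref{rantro1} that estimate is anchored at the initial time $T$ and involves $\|u(T)\|$.
\begin{itemize}
\item Step 5 works only because the Galerkin data converge strongly, $P_m u_\tau\to u_\tau$ in $H$.
\item You have only $u_n(T)\rightharpoonup u(T)$. Passing to the limit in $\|u_n(t)\|^2\le e^{-\frac{\nu\lambda_1}{2}(t-T)}\|u_n(T)\|^2+\cdots$ therefore leaves $\liminf_n\|u_n(T)\|^2\ge\|u(T)\|^2$ on the right-hand side, which is the wrong direction.
\item The distributional inequality \eqref{energine} that you did establish does not rescue this. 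Its test functions are supported in the open interval, so integrating it yields the estimate only from almost every $t_0>T$.
\item Letting $t_0\downarrow T$ would require $\limsup_{t_0\to T^+}\|u(t_0)\|\le\|u(T)\|$, i.e.\ strong right-continuity at $T$. Weak continuity gives only the reverse inequality.
\end{itemize}
Whether weak limits of Leray--Hopf solutions with weakly convergent data satisfy the energy inequality from the initial time is exactly the upper-semicontinuity issue the paper's introduction calls open.

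The same point undercuts your Step~1(iii). An estimate anchored at the left endpoint is not local in time, because Leray--Hopf solutions are only known to satisfy it from almost every starting time. So restriction to $[T_2,\infty)$ is not automatic. (Step~1 is in any case not required, since the statement takes the system as given.)

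\textbf{The repair.} The standard fix, from the deterministic theory of Cheskidov and Foias, is to require the energy estimate in \eqref{rantro1} only for a.e.\ $t_0\in[T,\infty)$ and all $t\ge t_0$, not necessarily for $t_0=T$. With that convention:
\begin{itemize}
\item Restriction is immediate.
\item Closedness follows from your Steps 2--3. By Lemma \ref{A3}, a subsequence satisfies $u_{n_k}(t_0)\to u(t_0)$ strongly for a.e.\ $t_0$.
\item Take $t_0$ in the full-measure set where this holds and every $u_{n_k}$ satisfies the estimate. There you pass to the limit using $\|u(t)\|\le\liminf_k\|u_{n_k}(t)\|$.
\end{itemize}
Theorem \ref{t5.3} then applies as you intend.

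Your closing identification of $\Omega_w(A,\omega)$ with $\mathcal{A}_w(\omega)$ is unnecessary, since the statement is phrased with $\Omega_w(A,\omega)$. For arbitrary $A$ only the inclusion $\Omega_w(A,\omega)\subset\mathcal{A}_w(\omega)$ is justified anyway.
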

\begin{proof}
By Lemma \ref{A1}, $\mathcal{E}^{\theta}_{\omega}([T,\infty))$ is a compact set in $C([T,\infty);X_w)$. Then from Theorem \ref{t5.3}, the weak tracking property is obtained.
\end{proof}

\begin{theorem}(Strong Tracking Property)
Let $\mathcal{E}^{\theta}_{\omega}([T,\infty)),T\in \mathbb{R}^+$ be a random evolutionary system constructed by \eqref{rantro1}-\eqref{rantro2}. Suppose every complete trajectory $\mathcal{E}^{\theta}_{\omega}((-\infty,\infty))\in C((-\infty,\infty); H)$. 
   Let $A\subset X$. Then for any $\varepsilon>0$ and $T>0$, there exists $t_0\geq0$, such that for any $t^*>t_0$, every trajectory $u\in \mathcal{E}^{\theta}_{\omega}([t^*,\infty))$ with $u(t)\in A$ satisfies
$$d_s(u(t),v(t))<\varepsilon,\forall t\in[t^*,t^*+T],$$
for some complete trajectory $\mathcal{E}^{\theta}_{\omega}((-\infty,\infty))$ with $v(t)\in \Omega_s(A,\omega)$ for all $t\in \mathbb{R}$.
\end{theorem}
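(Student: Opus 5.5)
The plan is to reduce the statement to the abstract Strong Tracking Property, Theorem \ref{sptp}, applied to the random evolutionary system $\mathcal{E}^{\theta}_{\omega}$ constructed in \eqref{rantro1}--\eqref{rantro2}. Theorem \ref{sptp} asks for two things: the hypotheses of Theorem \ref{t5.3}, namely that $\mathcal{E}^{\theta}_{\omega}([T,\infty))$ is compact in $C([T,\infty);H_w)$, and that the system is $s-$asymptotically compact. Once both are checked, the conclusion is exactly the one claimed, with $X=H$ and $d_s(u,v)=\|u-v\|$.

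\textbf{Step 1: weak compactness.} I will repeat the argument of the Weak Tracking Property just proved.
\begin{itemize}
\item Lemma \ref{A1} gives precompactness of $\mathcal{E}^{\theta}_{\omega}([0,\infty))$ in $C([0,\infty);H_w)$.
\item Translating via property 2 of Definition \ref{evsys} transfers this to every $[T,\infty)$.
\item For closedness, Proposition \ref{sem-con} shows that limits in $C([T,T'];H_w)$ of weak solutions are again weak solutions. The energy inequalities \eqref{energine}--\eqref{energinese} pass to the limit by weak lower semicontinuity of the norm, exactly as in Step 4 of Theorem \ref{exithe}.
\item Membership in the weakly closed ball $\mathbb{Y}(\omega)$ is preserved under weak limits.
\end{itemize}
Hence the limit trajectory again lies in $\mathcal{E}^{\theta}_{\omega}([T,\infty))$, which is therefore compact.

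\textbf{Step 2: strong asymptotic compactness.} Here I use the standing hypothesis that every complete trajectory lies in $C((-\infty,\infty);H)$. Lemmas \ref{A1}, \ref{A2} and \ref{A3} verify A1--A3, so Proposition \ref{precom} yields that $\{R(t,\omega)\}$ is $s-$asymptotically compact, as in the proof of Theorem \ref{th2.4}.

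\textbf{Step 3: conclusion and the main obstacle.} With both hypotheses in hand, Theorem \ref{sptp} applies. The delicate point is the mechanism inside Theorem \ref{sptp}: upgrading weak tracking to uniform strong closeness on $[t^*,t^*+T]$.

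I would make this explicit through a contradiction argument.
\begin{enumerate}
\item Suppose tracking fails along a sequence $u_n$ with $\tau_n\to\infty$.
\item Shift to $v_n(\cdot)=u_n(\cdot+\tau_n)$ and extract, by the diagonal argument of Theorem \ref{t5.3}, a complete trajectory $v$ with $v_n\to v$ in $C([-S,S];H_w)$ for every $S$.
\item By the hypothesis, $v$ is strongly continuous at every time. The first part of the proof of Proposition \ref{precom}, which uses A2 and A3, then gives $v_n(t)\to v(t)$ strongly for each $t\in[0,T]$.
\item Uniformity in $t$ should follow from a Dini-type argument. Let $t_n\to t$. Weak convergence $v_n(t_n)\rightharpoonup v(t)$ holds by the uniform weak convergence, while A2 gives $\limsup\|v_n(t_n)\|\le\|v(t)\|$ via strong convergence at nearby a.e.\ times $t_0<t_n$ together with the strong continuity of $v$.
\item Therefore $\sup_{[0,T]}\|v_n-v\|\to0$, and $v(t)\in\Omega_s(A,\omega)$ follows from Lemma \ref{l4.1}. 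This contradicts the assumed failure.
\end{enumerate}

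I expect step 4, the uniform strong convergence, to be the main obstacle.
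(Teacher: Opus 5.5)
Your reduction is the paper's proof. The paper takes $s$-asymptotic compactness from the proof of Theorem \ref{th2.4} (Proposition \ref{precom} with Lemmas \ref{A1}--\ref{A3}), invokes Theorem \ref{sptp}, and reads the compactness hypothesis of Theorem \ref{t5.3} directly off Lemma \ref{A1}. Your Step 1 is more careful than the paper on that hypothesis, with one caveat. The right-hand side of \eqref{energinese} contains the norm at the initial time, where you only have weak convergence, so lower semicontinuity goes the wrong way. That inequality survives the limit only for a.e.\ starting time, where Lemma \ref{A3} gives strong convergence; this is in fact the form used in the proof of Lemma \ref{A2}.

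The genuine gap is in your Step 3, and it is not where you put it.

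Item 4 is fine. You have $v_n(t_n)\rightharpoonup v(t)$, and $\limsup\|v_n(t_n)\|\le\|v(t)\|$ follows by picking $t_0<t$ close to $t$ outside the countable union of the exceptional null sets of A2 and A3, then using strong continuity of $v$. This gives $v_n(t_n)\to v(t)$ strongly, and a subsequence argument over $t_n\in[0,T]$ gives uniformity.

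What fails is item 2. If $u_n\in\mathcal{E}^{\theta}_{\omega}([\tau_n,\infty))$ with $u_n(\tau_n)\in A$, then $v_n(\cdot)=u_n(\cdot+\tau_n)$ is defined only on $[0,\infty)$, so no complete trajectory can be extracted. This has three consequences.
\begin{enumerate}
\item The strong-continuity hypothesis concerns only complete trajectories, so it is not available for $v$ in item 3.
\item In item 5, $v(0)$ is just a weak limit of points of $A$, so Lemma \ref{l4.1} gives no reason for $v(t)\in\Omega_s(A,\omega)$. Indeed, at $t=t^*$ the conclusion read this way would force every point of $A$ that starts a trajectory in $\mathcal{E}^{\theta}_{\omega}([t^*,\infty))$ to lie within $\varepsilon$ of $\Omega_s(A,\omega)$.
\item By property 2 of Definition \ref{evsys}, the shift replaces $\omega$ by $\theta_{\tau_n}\omega$. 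The noise $\zeta_\delta(\theta_{t+\tau_n}\omega)$ has no limit as $n\to\infty$, so a limit of the $v_n$ need not be a trajectory for any fixed sample.
\end{enumerate}

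The argument goes through in pullback form. Fix $\omega$ and take $u_n\in\mathcal{E}^{\theta}_{\omega}([-\tau_n,\infty))$ with $u_n(-\tau_n)\in A$, so that $u_n(t)\in R(\tau_n+t,\theta_{-\tau_n}\omega)A$. The diagonal argument of Lemma \ref{A1}, together with your closedness from Step 1, then yields $v\in\mathcal{E}^{\theta}_{\omega}((-\infty,\infty))$ with $v(t)\in\Omega_s(A,\theta_t\omega)$. Your items 3--4 then give $\sup_{[0,T]}\|u_n-v\|\to0$.

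The paper's proofs of Theorems \ref{t5.3} and \ref{sptp} perform the same shift, asserting $v_n\in\mathcal{E}^{\theta}_{\omega}([-\tau_n,\infty))$. So citing Theorem \ref{sptp} does not close this gap. The statement has to be read with the tracked trajectories having started in $A$ long before the tracking window.
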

\begin{proof}
From the proof of Theorem \ref{th2.4}, one knows that $\mathcal{E}^\theta_\omega$ is $s-$asymptotically compact.
By Theorem \ref{sptp},   the strong tracking property is obtained.
\end{proof}
\bigskip

{\bf Acknowledgments}
The first author would like to thank Prof.~Songsong Lu, department of mathematics, sun yat-sen university, guangzhou, 510275,
PR China, for helpful discussions.


\end{document}